\documentclass[11pt]{article}
\usepackage{amsmath}
\usepackage{amsfonts}
\usepackage{a4wide}
\usepackage{amsthm}
\theoremstyle{theorem}
\newtheorem{theorem}{Theorem}[section]
\newtheorem{lemma}[theorem]{Lemma}
\newtheorem{corollary}[theorem]{Corollary}
\newtheorem{proposition}[theorem]{Proposition}
\theoremstyle{definition}
\newtheorem{remark}[theorem]{Remark}
\newtheorem{example}[theorem]{Example}
\newtheorem*{acknowledgements}{Acknowledgements}

\newcommand{\GR}{{\mathnormal{\bf R}}}
\newcommand{\GQ}{{\mathnormal{\bf Q}}}
\newcommand{\GN}{{\mathnormal{\bf N}}}
\newcommand{\GS}{{\mathnormal{\bf S}}}

\newcommand{\cM}{{\mathcal{M}}}
\newcommand{\cO}{{\mathcal{O}}}
\newcommand{\cov}{\mathbb{C}\text{ov}}
\newcommand{\Lip}{\mathbb{L}\text{ip\,}}
\newcommand{\eps}{\epsilon}

\newcommand{\E}{\mathbb{E}}

\renewcommand{\P}{\mathbb{P}}

\def\1{\mathbf{1}}

\title{Phantom distribution functions for some stationary sequences\thanks{ This work has been developed within the MME-DII center of excellence (ANR-
11-LABEX-0023-01).}
}

\author{Paul Doukhan\\ Universit\'e de  Cergy-Pontoise\\ and Institut Universitaire de France \and 
Adam Jakubowski\\  Nicolaus Copernicus University \and Gabriel Lang \\ AGROPARISTECH }

\date{}

\begin{document}
\maketitle

\begin{abstract}
The notion of a phantom distribution function (phdf)  was introduced by \cite{OBr87}. We show 
that the existence of a phdf is a quite common phenomenon for stationary weakly dependent sequences. It is proved that any $\alpha$-mixing stationary sequence with continuous 
marginals admits a continuous phdf. Sufficient conditions are given for stationary sequences exhibiting weak dependence, what allows the use of attractive models beyond mixing. The case of discontinuous marginals is also discussed for $\alpha$-mixing. 

Special attention is paid to examples of processes which admit a continuous phantom distribution function while their extremal index is zero. We show that  \cite{Asmu98} and \cite{RRSS06} provide natural examples of such processes. We also construct a non-ergodic stationary process of this type.

\noindent {\em Keywords:} Strictly stationary processes; Extremes; Extremal index; Phantom distribution function; $\alpha$-mixing; Weak dependence; Lindley's process; Random walk Metropolis algorithm.

\noindent{\em MSC:} 60G70, 60G10, 60F99.
\end{abstract}

\section{Phantom distribution functions}
The notion of a phantom distribution function was introduced in \cite{OBr87}.
Let $\{X_j\}$ be  a stationary sequence with partial maxima 
\[ M_n = \max_{1 \leq j \leq n} X_j\]
and the marginal distribution function $F(x) = \P( X_1 \leq x)$.

 A stationary sequence $\{X_n\}$ is said to admit a phantom distribution function $G$ if
\[
\P(M_n \leq u_n) - G^n(u_n) \to 0, \text{ as $n\to\infty$},\]
for every sequence $\{u_n\}\subset \GR$.  
Since $u_n$ is arbitrary, the above can be written as 
\begin{equation}\label{e1}
\sup_{u\in\GR} \big| \P( M_n \leq u) - G^n(u) \big|\to 0, \text{ as $n\to\infty$}.
\end{equation}
It is obvious that $G$ is not uniquely determined 
for only the behavior of $G$ at its right end $G_* = \sup\{ x\,;\, G(x) < 1\}$ is of importance.
On the other hand, any two phantom distribution functions cannot be too different - we show in Theorem \ref{thm:equiv} below that they must be tail equivalent. 
 
When (\ref{e1}) is satisfied with $G(x) = F^{\theta}(x)$, for some $\theta \in (0,1]$, then 
we say that  $\{X_j\}$ has the extremal index $\theta$ in the sense of  \cite{Lead83} (see also \cite{LLR83}). The notion of the extremal index is well-understood and had been intensively investigated in 1980s and 1990s. 

Another well-known area where  phantom distribution functions naturally occur is when $\{X_j\}$ has a regenerative structure (see e.g. \cite{Asmu03}).
For such processes Theorem 3.1 of \cite{Root88} provides sufficient conditions  for a suitable power of the distribution function of the maximum over the regeneration cycle to be a phantom distribution function of the original sequence. Using this result \cite{Asmu98} exhibited an example of a Markov chain (in fact: the Lindley process with subexponential step distribution) which admits a non-trivial phantom distribution function and has the extremal index $\theta =  0$ in the sense of \cite{Lead83}, that is
\begin{equation}\label{eq: exzero}
 \P\big( M_n \leq u_n(\tau)\big) \to 1
\end{equation}
whenever $\{u_n(\tau)\}$ is such that 
\begin{equation}\label{eq: uentau}
 n (1-F(u_n(\tau)) \to \tau \in (0,+\infty).
\end{equation}
Intuitively this means that partial maxima $M_n$ increase much slower comparing with the independent case
and that information on $F$ cannot determine the limit behavior of laws of $M_n$.  

It follows from \cite{Asmu98} that the existence of a phantom distribution function can remain informative while the extremal index does not contribute anything. 
In the present paper we develop the theory of phantom distribution functions by showing that their existence is a quite common phenomenon among weakly dependent sequences and that in practically all cases of interest we can find a {\em continuous}  phantom distribution function. We also find two further examples of sequences with extremal index $\theta = 0$ and a continuous phantom distribution function.

In Section \ref{sect_gen} we provide convenient necessary and sufficient  conditions for the 
existence of
a {\em continuous} phantom distribution function.  This is done in Theorem \ref{th1}, which is an improvement of 
 \cite{OBr87}, \cite{Jak91} and \cite{Jak93}. According to this theorem we need only to find a {\em single} sequence $\{v_n\}$ of levels such that 
$\P \big( M_n \leq v_n\big) \to \gamma$, for some $\gamma \in (0,1)$, and Condition $B_{\infty}(v_n)$ holds (see (\ref{e4})), which is a form of ``mixing'' specific for maxima. We demonstrate how weak are these requirements by showing in  Theorem \ref{th2} that there exists a non-ergodic stationary sequence (in fact: exchangeable) with the extremal index $\theta = 0$ and admitting a continuous phantom distribution function.

Section \ref{sect_reg} contains a result built upon the extra information we are given when the process has a regenerative structure (Theorem \ref{thm:reg}), which is essentially a version of the mentioned Theorem 3.1. in \cite{Root88}.  We then discuss the motivating Assmussen's example and derive the existence of a continuous phantom distribution function for it.

In Section \ref{sect_list} we list all our results on existence of phantom distribution functions obtained from information on mixing and properties of marginal distributions only. Such conditions are usually not difficult to verify (as showed in Example 2) and therefore widely applicable.
 
In Theorem \ref{thm:alpha} we  prove a remarkable fact that {\em every} strongly  (or $\alpha$-) mixing stationary sequence with {\em continuous} marginals admits a continuous phantom distribution function. Applying this result we are able to show that  
the process constructed in Section 3 of \cite{RRSS06} (random walk Metropolis algorithm for distributions with heavy tails), which has the extremal index zero, also admits a continuous phantom distribution function. Since Metropolis Markov chains are easy in simulation, they can be considered as  a class of reference  processes for the extremal index zero processes in the sense of the relative extremal index defined in \cite{Jak91}.   

Since the paper by \cite{Andr84} it is known that some of time series considered in econometric modeling are non-strong mixing (see also \cite{DDL07}, Section 1.5). Taking this into account, the notion of ``weak dependence coefficients'' has been developed during the last twenty years and the resulting theory is reach enough to cover the most interesting cases.  We refer to \cite{DDL07} for a comprehensive presentation of this trend in the analysis of time series. 

In the present paper we show that the approach through ``weak dependence coefficients" is applicable to our problem, as well. The reasoning is not automatic, since verification of Condition
$B_{\infty}(v_n)$ requires approximation of indicator functions by functions exhibiting more ``smoothness''. Therefore to cope efficiently with  $\theta$-, $\eta$-, $\kappa$- and $\lambda$-weakly dependent sequences we need a bit more regularity than just the continuity of $F$ (see (\ref{Concentration})). Moreover, in Theorems \ref{PHDFtheta} - \ref{PHDFkappa} 
we have to assume that the corresponding weak dependence coefficient converges to zero at some rate. This complies with the general philosophy of weak dependence coefficients, since ``weak dependence' is preserved under sufficiently regular  transformations of processes, with possible change in the rate of decay of the corresponding coefficient (see \cite{DDL07}, Propositions 2.1 and 2.2).  

We conclude our considerations by proposing in Theorem \ref{th3b} how to deal with  a {\em discontinuous} distribution function $F$, for the time being only for $\alpha$-mixing sequences. We show that $m$-dependent sequences admit a continuous phantom distribution function if the marginal distribution function $F$ satisfies only (\ref{e2}) (as in the i.i.d. case), while the non-trivial $\alpha$-mixing seems to require more regularity than (\ref{e2}), even for exponential rate of $\alpha$-mixing.

Section \ref{sect_proofs} contains proofs of all results involving mixing or weak dependence.
The proofs consist in  using our basic  Proposition 
\ref{Propbasic} in order to deduce the rate of convergence $\P(X_1 > v_n) \to 0$, and then  checking Condition $B_{\infty}(v_n)$.

\section{Existence of continuous phantom distribution functions}\label{sect_gen}

 It is an observation made long time ago by \cite{OBr74} (Theorem 2), that for a given distribution function $G$  there exists $\gamma \in (0,1)$ and a sequence $\{v_n = v_n(\gamma)\}$ such that 
\[G^n(v_n) \to \gamma,\]
if, and only if, $G$ satisfies the relations
\begin{equation}
\label{e2}
G(G_*-)=1\quad\text{and}\quad\lim\limits_{x\to G_*-}\dfrac{1-G(x-)}{1-G(x)}=1.
\end{equation}
We will say that $G$ is regular (in the sense of O'Brien) if (\ref{e2}) holds. Notice that if $G$ is regular then the sequence $\{v_n(\gamma)\}$ exists for every $\gamma \in (0,1)$ and that $\{v_n\}$ can always be chosen non-decreasing.

The tail equivalence is another very old notion, introduced by \cite{Res71} and usually
considered in the context of domains of attraction of extreme value distributions. We will modify it slightly, by saying that the tails of two distribution functions $G$ and $H$ with right ends $G_*$ and $H_*$ are {\em strictly tail-equivalent} if 
\begin{equation}\label{eqn:tailequiv}
G_* = H_* \quad\text{and}\quad  \frac{1 - H(x)}{1 - G(x)} \to 1, \text{ as  $x \to G_*-$.} 
\end{equation}

We have a nice characterization of strict tail-equivalence in terms of being mutual phantom 
distribution function. 
\begin{proposition}\label{prop:tailequiv}
Let $G$ be a regular distribution function. Then for any distribution function $H$ the following conditions are equivalent:
\begin{description}
\item{(i)} $H$ is regular and strictly tail-equivalent to $G$.
\item{(ii)}
There exist $\gamma \in (0,1)$ and a non-decreasing sequence $\{v_n\}$ such that 
\begin{equation}\label{eq:tailequiv3}
G^n(v_n) \to \gamma,\quad F^n(v_n) \to \gamma.
\end{equation}
\item{(iii)}
\begin{equation}\label{eq:tailequiv2}
 \sup_{x\in\GR} \big| G^n(x) - H^n(x) \big| \to 0, \text{ as $n\to\infty$}.
\end{equation}
\end{description}
\end{proposition}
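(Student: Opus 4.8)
The plan is to prove the cycle $\text{(iii)}\Rightarrow\text{(ii)}\Rightarrow\text{(i)}\Rightarrow\text{(iii)}$, relying throughout on three elementary facts: $|a^n-b^n|\le n|a-b|$ for $a,b\in[0,1]$; $\,np(1-p)^{n-1}\le1$ for $p\in[0,1]$; and $(1-cp_n)^n\to e^{-c\tau}$ whenever $c>0$, $p_n\to0$ and $np_n\to\tau$. The object that governs everything is $n(1-G(x))$, through the following passage, used repeatedly: if $G$ is regular, $\{v_n\}$ is non-decreasing and $G^n(v_n)\to\gamma\in(0,1)$, then $G(v_n)\to1$ (because $\gamma>0$), hence $v_n\uparrow G_*$ (a cluster point strictly below $G_*$ is excluded) and, taking logarithms and expanding, $n(1-G(v_n))\to\tau:=-\log\gamma\in(0,+\infty)$. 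The implication $\text{(iii)}\Rightarrow\text{(ii)}$ is then immediate: the O'Brien characterization recalled before the Proposition furnishes $\gamma\in(0,1)$ and a non-decreasing $\{v_n\}$ with $G^n(v_n)\to\gamma$, and $|H^n(v_n)-G^n(v_n)|\le\sup_x|G^n(x)-H^n(x)|\to0$ forces $H^n(v_n)\to\gamma$ as well.

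For $\text{(i)}\Rightarrow\text{(iii)}$ I would fix $\epsilon>0$ and split $\R$ at a level near $G_*$. Strict tail-equivalence provides $x_1<G_*$ with $|(1-H(x))/(1-G(x))-1|\le\epsilon$ for $x\in(x_1,G_*)$; since $G(G_*-)=1$ one may then choose $\eta\in(0,\tfrac12)$ small enough that $\{x:G(x)\ge1-\eta\}=[w,\infty)$ with $x_1<w<G_*$. On $[w,\infty)$: for $x\ge G_*$ both powers equal $1$ (here the hypotheses $H_*=G_*$ and $H(H_*-)=1$ enter), whereas for $x\in[w,G_*)$, writing $p=1-G(x)\le\eta$ and $q=1-H(x)$ with $|q-p|\le\epsilon p$, a Bernoulli estimate combined with $np(1-p)^{n-1}\le1$ yields $|G^n(x)-H^n(x)|\le C\epsilon$ with $C$ an absolute constant, uniformly in $n$ and $x$. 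On $(-\infty,w)$: $G^n(x)<(1-\eta)^n$, and since $w<G_*=H_*$ we have $H(w-)<1$, so $H^n(x)\le H(w-)^n$; both tend to $0$ uniformly. Hence $\limsup_n\sup_x|G^n(x)-H^n(x)|\le C\epsilon$, and $\epsilon$ was arbitrary.

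The substantial implication is $\text{(ii)}\Rightarrow\text{(i)}$. From $G^n(v_n)\to\gamma$, $H^n(v_n)\to\gamma$ with $\{v_n\}$ non-decreasing, the passage above gives $v_n\uparrow G_*$, $n(1-G(v_n))\to\tau$, $n(1-H(v_n))\to\tau$, hence $(1-H(v_n))/(1-G(v_n))\to1$, i.e.\ tail-equivalence \emph{along} $\{v_n\}$. To upgrade this to $\lim_{x\to G_*-}(1-H(x))/(1-G(x))=1$, suppose it fails: there are $\epsilon>0$ and $x_k\uparrow G_*$ such that, along a subsequence, either (a) $1-H(x_k)\ge(1+\epsilon)(1-G(x_k))$ for all $k$, or (b) $1-H(x_k)\le(1-\epsilon)(1-G(x_k))$ for all $k$; set $p_k:=1-G(x_k)\to0$ (using $G(G_*-)=1$). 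Let $n_k=\min\{n:v_n\ge x_k\}$, so that $n_k\to\infty$ and $v_{n_k-1}<x_k\le v_{n_k}$. Monotonicity of $G$ squeezes
\[
 n_k\,(1-G(v_{n_k}))\ \le\ n_k p_k\ \le\ \tfrac{n_k}{n_k-1}\,(n_k-1)(1-G(v_{n_k-1})),
\]
and both outer terms tend to $\tau$, so $n_kp_k\to\tau$. In case (a) this gives $H^{n_k}(x_k)\le(1-(1+\epsilon)p_k)^{n_k}\to e^{-(1+\epsilon)\tau}$, while $x_k>v_{n_k-1}$ gives $H^{n_k}(x_k)\ge H^{n_k-1}(v_{n_k-1})\cdot H(v_{n_k-1})\to e^{-\tau}$ (the first factor is a term of the sequence $m\mapsto H^m(v_m)$, the second tends to $1$); since $(1+\epsilon)\tau>\tau$, this is a contradiction. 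Case (b) is the mirror image: $x_k\le v_{n_k}$ gives $H^{n_k}(x_k)\le H^{n_k}(v_{n_k})\to e^{-\tau}$, against $H^{n_k}(x_k)\ge(1-(1-\epsilon)p_k)^{n_k}\to e^{-(1-\epsilon)\tau}>e^{-\tau}$. Hence $(1-H(x))/(1-G(x))\to1$ as $x\to G_*-$.

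It remains to read off that $H$ is regular with $H_*=G_*$: from $1-H(x)\sim1-G(x)\to0$ as $x\to G_*-$ we get $H(G_*-)=1$ and $H_*\ge G_*$, while $H_*>G_*$ is impossible (it would make $1-H(G_*-)>0$ yet $1-G(G_*-)=0$, against the ratio limit), so $H_*=G_*$; and letting $y\uparrow x$ in the two-sided tail-equivalence bounds gives $(1-H(x-))/(1-G(x-))\to1$ as well, whence
\[
 \frac{1-H(x-)}{1-H(x)}=\frac{1-H(x-)}{1-G(x-)}\cdot\frac{1-G(x-)}{1-G(x)}\cdot\frac{1-G(x)}{1-H(x)}\ \longrightarrow\ 1\qquad(x\to G_*-),
\]
the middle factor by the second relation in the regularity of $G$; thus $H$ is regular and the cycle closes. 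I expect the main obstacle to be the contradiction step in $\text{(ii)}\Rightarrow\text{(i)}$: a single sequence $\{v_n\}$ looks too sparse to control $H$ between consecutive levels, and the decisive point is that sandwiching an arbitrary $x_k\to G_*-$ between $v_{n_k-1}$ and $v_{n_k}$, together with monotonicity, simultaneously pins the normalization $n_k(1-G(x_k))\to\tau$ and bounds $H^{n_k}(x_k)$ from both sides, making the two resulting estimates collide.
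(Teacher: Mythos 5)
Your proposal is correct, and it deviates from the paper's proof in instructive ways, so let me compare. You prove the cycle (iii)$\Rightarrow$(ii)$\Rightarrow$(i)$\Rightarrow$(iii), whereas the paper proves the two equivalences (i)$\Leftrightarrow$(ii) and (i)$\Leftrightarrow$(iii). Your (iii)$\Rightarrow$(ii) via O'Brien's characterization is exactly the paper's opening move in (iii)$\Rightarrow$(i). In (ii)$\Rightarrow$(i) you use the same decisive device as the paper — bracketing an arbitrary $x_k\to G_*-$ between consecutive levels $v_{n_k-1}<x_k\le v_{n_k}$ to pin $n_k(1-G(x_k))\to\tau$ — but you then finish by contradiction, playing the limits $e^{-(1\pm\epsilon)\tau}$ of $H^{n_k}(x_k)$ against $H^{n_k}(v_{n_k})$ and $H^{n_k-1}(v_{n_k-1})\cdot H(v_{n_k-1})$; the paper is slightly shorter here, since the identical two-sided squeeze applied to $H$ (using $H^n(v_n)\to\gamma$, hence $n(1-H(v_n))\to\tau$) gives $n_k(1-H(x_k))\to\tau$ directly and the ratio limit follows without any case analysis. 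On the other hand you are more explicit than the paper in extracting $H_*=G_*$, $H(H_*-)=1$ and the left-limit part of regularity of $H$ from the ratio limit (the paper effectively gets regularity of $H$ from O'Brien's theorem and leaves the rest terse), which is a genuine gain in completeness. The biggest methodological difference is (i)$\Rightarrow$(iii): you give a direct, uniform, quantitative bound — splitting $\R$ at a threshold $w$, using $|a^n-b^n|\le n|a-b|\max(a,b)^{n-1}$ together with $ns(1-s)^{n-1}\le1$ to get $\sup_{x\ge w}|G^n(x)-H^n(x)|\le\epsilon/(1-\epsilon)$ uniformly in $n$, and killing the region $x<w$ by $(1-\eta)^n+H(w)^n\to0$ — whereas the paper argues by contradiction, extracting a bad subsequence $(n_k,x_k)$, ruling out $G(x_k)\le1-\delta$, and then invoking the approximation $G^{n}(x)=\exp(-n(1-G(x)))+o(1)$. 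Your route buys an explicit error estimate and avoids subsequence extraction; the paper's is shorter to state but less quantitative. The one typographical point: condition (ii) in the statement reads $F^n(v_n)\to\gamma$, and you correctly read $F$ as $H$, which is what the paper's own proof does.
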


\begin{proof} We shall prove $\text{(i)} \iff \text{(ii)}$
first and then $\text{(iii)} \iff \text{(i)}$.

So assume (i). Since $G$ is regular, there are $\gamma \in (0,1)$ and a non-decreasing sequence $\{v_n\}$ such that $G^n(v_n) \to \gamma$.  By the strict tail-equivalence 
\begin{align}
 G^{n}(v_n) &= \exp\big( - n (1-G(v_n))\big) + o(1) \notag \\
&=  \exp\big( - n (1- H(v_n)\big) ^{(1-G(v_n))/(1 - H(v_n))} + o(1) \\
& = \exp\big( - n (1-H(v_n))\big) + o(1) \notag \\
& = H^{n}(v_n) + o(1). \notag
\end{align}
Hence $H^n(v_n) \to \gamma$ and (ii) follows.  

To prove that (ii) implies (i), take suitable $\gamma$ and $\{v_n\}$ and consider a sequence $x_n \nearrow G_*$. Define numbers $m_n$ by
\[v_{m_n} \leq x_n < v_{m_n + 1}.\]
Clearly, $m_n \to \infty$ and we have
\[
1 - G(v_{m_n + 1}) \leq 1 - G(x_n) \leq 1 - G(v_{m_n}).\]
Consequently, 
\[ m_n\big( 1 - G(v_{m_n + 1})\big)  \leq m_n\big(1 - G(x_n) \big) \leq m_n\big(1 - G(v_{m_n})\big)\]
and the first and the third terms go to $-\log \gamma$, hence also 
$m_n\big(1 - G(x_n) \big) \to -\log \gamma$. The same sandwiching holds for 
expressions involving $H$ and so
\[
\lim_{n\to\infty} \frac{1 - G(x_n)}{1 - H(x_n)}   = \lim_{n\to\infty} \frac{m_n\big(1 - G(x_n)\big)}{m_n\big(1 - H(x_n)\big)} = \frac{-\log \gamma} {-\log \gamma} = 1.\]
Since $x_n \nearrow G_*$ was arbitrary, the strict tail equivalence follows. 

Now assume that  (\ref{eq:tailequiv2}) holds. By the regularity of $G$ there exist a number $\gamma \in (0,1)$ and  a sequence of levels $\{v_n(\gamma)\}$ such that $G^n(v_n(\gamma)) \to \gamma \in (0,1)$. Hence also 
$H^n(v_n(\gamma)) \to \gamma \in (0,1)$ and the regularity of $H$ follows.

Next suppose that $G_* < H_*$. Then there exists $x_0 \in \GR$ such that $G(x_0) = 1$, while 
$ H(x_0) < 1$ and 
\[  G^n(x_0) - H^n(x_0) \to 1,\]
what contradicts (\ref{eq:tailequiv2}). Hence $G_* \geq H_*$ and by the symmetry $G_* = H_*$.
 
Now let us take any sequence $x_n \to G_*-$, some $\tau > 0$ and define
\[ m_n = \min\{ m\,;\, m ( 1 - G(x_n)) \geq \tau \}.\]
Since $1 - G(x_n) \to 0$, we have $m_n (1 - G(x_n)) \to \tau$, hence also $G^{m_n}(x_n) \to \exp(-\tau)$. By (\ref{eq:tailequiv2}) $H^{m_n}(x_n) \to \exp(-\tau)$ and so $m_n (1 - H(x_n)) \to \tau$. Finally we have
\[  \frac{1 - G(x_n)}{1 - H(x_n)} = \frac{m_n \big(1 - G(x_n)\big)}{m_n \big(1 - H(x_n)\big)}
\to \frac{\tau}{\tau} = 1,\ \text { as $n\to\infty$.}
\]

To prove (i) $\Longrightarrow $ (iii), let us assume that $H$ is regular and strictly tail-equivalent to $G$, but 
(\ref{eq:tailequiv2}) does not hold, i.e. there is a subsequence $\{n_k\} \subset \GN$, a sequence $\{x_{k}\} \subset \GR$ and $\eta > 0$ such that 
\[ \big|G^{n_k}(x_k) - H^{n_k}(x_k)\big| > \eta, \ k \in \GN.\]

Suppose that along some further subsequence $\{n_{k_l}\}$ we have
 $G(x_{k_l}) \leq 1 - \delta$, for some $\delta > 0$. Then $G^{n_{k_l}}(x_{k_l}) \to 0$ and 
 $H^{n_{k_l}}(x_{k_l}) \geq \eta$ for $l$ large enough. This gives $\sup_l n_{k_l}\big(1 - H(x_{k_l})\big) < +\infty$, hence $H(x_{k_l}) \to 1$ and $x_{k_l} \to H_*- = G_*-$. This in contradiction  with $G(x_{k_l}) \leq 1 - \delta$. 

So we may and do assume that $x_k \to G_*-$. Then by the strict tail-equivalence 
\begin{align*}
 G^{n_k}(x_k) &= \exp\big( - n_k (1-G(x_k))\big) + o(1) \\
&=  \exp\big( - n_k (1- H(x_k)\big) ^{(1-G(x_k))/(1 - H(x_k))} + o(1) \\
& = \exp\big( - n_k (1-H(x_k))\big) + o(1) \\
& = H^{n_k}(x_k) + o(1).
\end{align*}
We have once again arrived to a contradiction, this time with the choice of $\{n_k\}$ and $\{x_k\}$. 
\end{proof}

The above proposition yields immediately the following useful fact.
\begin{theorem}\label{thm:equiv}
Suppose that a stationary sequence $\{X_j\}$ admits a {\em regular} phantom distribution function $G$. 

Let $H$ be any other phantom distribution function for   $\{X_j\}$.
Then $H$ is also regular and $G$ and $H$ are strictly tail-equivalent.

Conversely, if $H$ is regular and strictly tail-equivalent to $G$, then it is also a phantom distribution function function for $\{X_j\}$.
\end{theorem}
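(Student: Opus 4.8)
The plan is to derive Theorem \ref{thm:equiv} as a direct consequence of Proposition \ref{prop:tailequiv}, using only the definition of a phantom distribution function via \eqref{e1} and the triangle inequality for the sup-norm.

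First I would handle the forward direction. Suppose $G$ is a regular phantom distribution function for $\{X_j\}$ and $H$ is any other phantom distribution function. By \eqref{e1} applied to both $G$ and $H$, we have $\sup_u |\P(M_n \leq u) - G^n(u)| \to 0$ and $\sup_u |\P(M_n \leq u) - H^n(u)| \to 0$. Adding these via the triangle inequality gives $\sup_u |G^n(u) - H^n(u)| \to 0$, which is precisely condition (iii) of Proposition \ref{prop:tailequiv}. Since $G$ is assumed regular, the implication (iii) $\Rightarrow$ (i) of that proposition yields that $H$ is regular and strictly tail-equivalent to $G$.

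For the converse, suppose $H$ is regular and strictly tail-equivalent to $G$. Then (i) holds in Proposition \ref{prop:tailequiv}, so (iii) holds, i.e. $\sup_u |G^n(u) - H^n(u)| \to 0$. Combining this with the fact that $G$ is a phantom distribution function for $\{X_j\}$ (so $\sup_u |\P(M_n \leq u) - G^n(u)| \to 0$), the triangle inequality gives $\sup_u |\P(M_n \leq u) - H^n(u)| \to 0$, i.e. $H$ is a phantom distribution function for $\{X_j\}$.

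There is essentially no obstacle here: the entire content has been front-loaded into Proposition \ref{prop:tailequiv}, and Theorem \ref{thm:equiv} is just the observation that ``being a phantom distribution function for a fixed sequence'' is, via \eqref{e1} and the triangle inequality, an equivalence-relation-like property, so two phantom distribution functions are mutually interchangeable in the sense of \eqref{eq:tailequiv2}. The only point requiring a word of care is that the triangle inequality must be applied to the supremum uniformly in $u$, but this is immediate since $\sup_u(a_u + b_u) \le \sup_u a_u + \sup_u b_u$; no regularity of $H$ is needed for the triangle-inequality steps themselves, only to invoke the (iii) $\Rightarrow$ (i) direction of the proposition, which does require $G$ regular as hypothesized.
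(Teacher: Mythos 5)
Your proof is correct and is exactly the argument the paper intends: the authors state that Theorem \ref{thm:equiv} follows ``immediately'' from Proposition \ref{prop:tailequiv}, and your triangle-inequality reduction to condition (iii) of that proposition, together with the (i) $\iff$ (iii) equivalence, is precisely that immediate deduction.
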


Somewhat surprisingly, it is possible to provide a complete description 
of stationary sequences admitting a 
phantom distribution function  and the description is in terms of natural and verifiable conditions.  The following theorem slightly improves the results of  \cite{OBr87}, \cite{Jak91} and \cite{Jak93}, since we construct a {\em continuous} phantom distribution function.

\begin{theorem}\label{th1}
Let $\{X_j\}$ be stationary. The following are equivalent:
\begin{description}
\item{\em (i)} The sequence $\{X_j\}$ admits a {\em continuous} phantom distribution function.
\item{\em (ii)} The sequence $\{X_j\}$ admits a {\em regular} phantom distribution function.
\item{\em (iii)}  There exists a sequence $\{v_n\}$ and $\gamma \in (0,1)$  such that 
\begin{equation}\label{e3}
\P(M_n\leq v_n)\to \gamma,
\end{equation}
and the following Condition $B_{\infty}(v_n)$ holds: 
\begin{equation}\label{e4}
\sup_{p,q\in \GN}\left| \P\big(M_{p+q}\leq v_n\big) - \P\big(M_p\leq v_n\big) \P\big(M_q\leq v_n\big)\right| 
\to 0,\ \text{ as $n\to\infty$.}
\end{equation}
\item{\em (iv)}  There exists a sequence $\{v_n\}$ and $\gamma \in (0,1)$  such that (\ref{e3}) holds 
and for each $T > 0$ the following Condition $B_{T}(v_n)$ is fulfilled: 
\begin{equation}\label{e4a}
\sup_{p,q\in \GN,\atop \ p+q \leq T\cdot n}\left| \P\big(M_{p+q}\leq v_n\big) - \P\big(M_p\leq v_n\big) \P\big(M_q\leq v_n\big)\right| 
\to 0,\ \text{ as $n\to\infty$.}
\end{equation}
\item{\em (v)} There exists a sequence $\{v_n\}$ and $\gamma \in (0,1)$ such that for some dense subset $\GQ\subset \GR^+$
\begin{equation}\label{e5}
\P\big(M_{[nt]}\leq v_n\big)\to \gamma^t,\ t\in\GQ.
\end{equation}
\end{description}
If $\{v_n\}$ is strictly increasing, then a continuous phantom distribution function 
$G$ can be defined by  
\[ G(x) = \gamma^{g(x)},\]
where
\begin{equation}\label{ephdf}
g(x)=\begin{cases} v_1 - x + 1,&\text{if $x<v_1$,}\\
\displaystyle\frac{- x + (n+1) v_{n+1} - n v_n}{n(n+1)(v_{n+1} - v_n)},&\text{if $v_n\le x<v_{n+1}$,}\\
0,&\text{if $x\ge \sup\{v_n\,:\,n\in \GN\}$}
\end{cases}
\end{equation}
If $\{v_n\}$ is not strictly increasing but non-decreasing only, than a slightly more complicated formula for $G$ is given in (\ref{eq:cphdf}) below.  
\end{theorem}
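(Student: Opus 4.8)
The plan is to establish the cycle of implications $(\mathrm i)\Rightarrow(\mathrm{ii})\Rightarrow(\mathrm{iii})\Rightarrow(\mathrm{iv})\Rightarrow(\mathrm v)\Rightarrow(\mathrm i)$, the last arrow incorporating the verification that the function built from (\ref{ephdf}) is a continuous phantom distribution function. Two of the arrows are essentially free. $(\mathrm i)\Rightarrow(\mathrm{ii})$ holds because a continuous $G$ trivially satisfies O'Brien's conditions (\ref{e2}): one has $G(x-)=G(x)$ everywhere, and $G(G_*-)=G(G_*)=1$ by right-continuity at the right end. And $(\mathrm{iii})\Rightarrow(\mathrm{iv})$ holds because $B_T(v_n)$ in (\ref{e4a}) is merely $B_\infty(v_n)$ with the supremum restricted to a smaller set of pairs.

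For $(\mathrm{ii})\Rightarrow(\mathrm{iii})$: regularity of the phantom d.f.\ $G$ supplies $\gamma\in(0,1)$ and a non-decreasing $\{v_n\}$ with $G^n(v_n)\to\gamma$, whence (\ref{e3}) holds because $G$ is phantom. The only genuine point is $\P(X_1>v_n)\to0$: applying (\ref{e1}) along the indices $k$ with levels $v_{k^2}$ — where $k\log G(v_{k^2})=\tfrac1k\bigl(k^2\log G(v_{k^2})\bigr)\to0$, so $G^k(v_{k^2})\to1$ — yields $\P(M_k\le v_{k^2})\to1$, hence $F(v_{k^2})\to1$, and monotonicity of $n\mapsto F(v_n)$ upgrades this to $F(v_n)\to1$. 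Then $B_\infty(v_n)$ follows by splitting $\sup_{p,q}$ in (\ref{e4}): with $\eps_k:=\sup_u\bigl|\P(M_k\le u)-G^k(u)\bigr|\to0$, the difference is at most $\eps_{p+q}+\eps_p+\eps_q$, which is small as soon as $\min(p,q)$ is large, while for $\min(p,q)\le N$ stationarity bounds it by $2N\bigl(1-F(v_n)\bigr)\to0$.

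The crux is $(\mathrm{iv})\Rightarrow(\mathrm v)$. One first needs $\P(X_1>v_n)\to0$, which now comes from (\ref{e3}) and $B_2(v_n)$ alone: with $\eta_n\to0$ the bound in $B_2(v_n)$, iterating $\P(M_{2\ell}\le v_n)\le\P(M_\ell\le v_n)^2+\eta_n$ from $\P(M_1\le v_n)=F(v_n)$, the sequence $a_0=F(v_n)$, $a_{j+1}=a_j^2+\eta_n$ drops below $2\eta_n$ within $O(\log\log(1/\eta_n))$ steps, so $\P(M_{2^j}\le v_n)\le 2\eta_n$ for some $2^j\le n$ once $n$ is large; since $\P(M_{2^j}\le v_n)\ge\P(M_n\le v_n)\to\gamma>0$ this forces $F(v_n)\to1$. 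With $B_T(v_n)$ one then obtains, by iterating the two-sided bound $\P(M_{p+q}\le v_n)=\P(M_p\le v_n)\,\P(M_q\le v_n)+O(\eta_n)$ a fixed number of times, $\P(M_{kn}\le v_n)\to\gamma^k$ for every integer $k$; finally, for $t=p/q$ one replaces $np$ by $q\lfloor np/q\rfloor$ — a discrepancy of fewer than $q$ maxima, costing only $q(1-F(v_n))\to0$ — and takes a $q$-th root to get $\P(M_{\lfloor nt\rfloor}\le v_n)\to\gamma^t$, i.e.\ (\ref{e5}) for all positive rational $t$. Keeping the accumulated $O(\eta_n)$ errors from swamping the estimate is what forces the number of iterations to stay bounded, hence the use of $B_T(v_n)$ for every $T$ rather than of a single scale — this is the main technical obstacle.

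Finally $(\mathrm v)\Rightarrow(\mathrm i)$ is the construction. Assume first $\{v_n\}$ strictly increasing, put $G=\gamma^{g}$ with $g$ as in (\ref{ephdf}) and $V:=\sup_n v_n$. A direct computation shows that the three branches of $g$ match at each $v_n$, that $g$ is continuous and strictly decreasing on $(-\infty,V)$ with $g(v_n)=1/n$, and that $g\equiv0$ on $[V,\infty)$; hence $G$ is a continuous distribution function with right end $G_*=V$ and $G^n(v_n)=\gamma$ exactly, so $G^{\lfloor nt\rfloor}(v_n)=\gamma^{\lfloor nt\rfloor/n}\to\gamma^t$. From (\ref{e5}) one reads off $F(v_n)\to1$ (let $t\downarrow0$, using $F(v_n)\ge\P(M_{\lfloor nt\rfloor}\le v_n)\to\gamma^t$), while $\P(M_n\le v_n)\to\gamma<1$ forces $F(v_n)<1$, hence $v_n<F_*:=\sup\{x\,;\,F(x)<1\}$; together these give $V=F_*=G_*$. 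Consequently both sides of (\ref{e1}) equal $1$ for $u\ge G_*$, and for $u<v_1$ both $\P(M_n\le u)$ and $G^n(u)$ tend to $0$ uniformly (here $\sup_{u<v_1}G^n(u)=\gamma^n$, and $\P(M_n\le v_1)\to0$ follows from the grid estimate below at $m=1$). For $v_1\le u<V$ one must show $\sup_u\bigl|\P(M_n\le u)-G^n(u)\bigr|\to0$; as both functions are non-decreasing in $u$ and $G$ interpolates the values $G^n(v_m)=\gamma^{n/m}$, this reduces to the grid estimate $\sup_m\bigl|\P(M_n\le v_m)-\gamma^{n/m}\bigr|\to0$ together with $\sup_m\bigl|\gamma^{n/(m+1)}-\gamma^{n/m}\bigr|=O(1/n)$ (which follows from $\sup_{x\ge1}\bigl|\tfrac d{dx}\gamma^{n/x}\bigr|=O(1/n)$). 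The grid estimate is proved by contradiction: choosing $(n_k,m_k)$ nearly attaining $\sup_m$ and passing to $n_k/m_k\to s\in[0,\infty]$, one sandwiches $\P(M_{n_k}\le v_{m_k})$ between $\P(M_{\lfloor t' m_k\rfloor}\le v_{m_k})$ for rationals $t'$ just above and just below $s$, which by (\ref{e5}) converge to within $o(1)$ of $\gamma^s=\lim\gamma^{n_k/m_k}$ — a contradiction; the degenerate cases ($s\in\{0,\infty\}$, or $\{m_k\}$ bounded) are handled by letting $t'\downarrow0$ or $t'\uparrow\infty$ in (\ref{e5}) together with monotonicity of $k\mapsto\P(M_k\le v)$. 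When $\{v_n\}$ is merely non-decreasing, passing to the strictly increasing sequence of its distinct values (with $\gamma$ reindexed by the multiplicities) leads to the formula (\ref{eq:cphdf}); and Theorem \ref{thm:equiv} guarantees that the continuous phantom d.f.\ so produced is determined up to strict tail-equivalence.
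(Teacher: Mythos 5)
Your write-up takes a genuinely different route from the paper's. The paper does not reprove the equivalences at all: it quotes Theorem 1.3 and Proposition 2.5 of \cite{Jak91} and Theorem 2 and Corollary 5 of \cite{Jak93}, which already yield the \emph{discontinuous} phantom distribution function (\ref{ephdfdisc}), and its entire written proof is devoted to one point — showing that the continuous interpolation $G$ of (\ref{eq:cphdf}) satisfies $G^n(u_n)-\widetilde{G}^n(u_n)\to0$, which is done by exploiting the O'Brien regularity (\ref{e2}) of $\widetilde{G}$. You instead prove the whole cycle from scratch, and your arguments for (i)$\Rightarrow$(ii)$\Rightarrow$(iii)$\Rightarrow$(iv)$\Rightarrow$(v), as well as (v)$\Rightarrow$(i) in the \emph{strictly increasing} case, are sound: the $\eps_{p+q}+\eps_p+\eps_q$ splitting for $B_\infty(v_n)$, the doubling argument with $B_2(v_n)$ forcing $F(v_n)\to1$, the block iteration under $B_T(v_n)$ giving $\P(M_{[nt]}\le v_n)\to\gamma^t$ for rational $t$, and the reduction of (\ref{e1}) to the grid estimate $\sup_m|\P(M_n\le v_m)-\gamma^{n/m}|\to0$ together with $\sup_m|\gamma^{n/(m+1)}-\gamma^{n/m}|=O(1/n)$ all check out. (One repairable slip: the claim that $a_{j+1}=a_j^2+\eta_n$ falls below $2\eta_n$ in $O(\log\log(1/\eta_n))$ doublings presupposes the contradiction hypothesis $F(v_n)\le1-\delta$ and can exceed the budget $2^j\le n$ when $\eta_n\to0$ very fast; the contradiction only needs the iterate to drop below $\gamma/2$, which takes a bounded number of steps, so this is cosmetic.) Your route is self-contained; the paper's is shorter and isolates what is actually new.

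The genuine gap is your last sentence — which is exactly the case the paper's proof is about: $\{v_n\}$ non-decreasing but not strictly increasing. Your argument hinges on the uniform gap bound $\sup_m|\gamma^{n/(m+1)}-\gamma^{n/m}|=O(1/n)$, i.e.\ on consecutive grid exponents being $1/m$ and $1/(m+1)$. After passing to the distinct values $v_{p_1}<v_{p_2}<\cdots$ the relevant quantity is $\gamma^{n/p_{k+1}}-\gamma^{n/p_k}$, and this is \emph{not} small in the moving regime unless $p_{k+1}/p_k\to1$: with $p_{k+1}\ge 2p_k$ along a subsequence and $n\asymp p_{k+1}$ it stays bounded away from $0$. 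So ``reindexing $\gamma$ by the multiplicities'' hides the whole difficulty; one must first prove $p_{k+1}/p_k\to1$, which is precisely the regularity (\ref{e2}) of the step function (\ref{ephdfdisc}) that the paper uses. In your framework this can be rescued: (\ref{e5}) yields (\ref{e3}), and for $t\in\GQ$, $t>1$, one has $\P\big(M_{[(p_k+1)t]}\le v_{p_k+1}\big)\to\gamma^t<\gamma$ while $\P\big(M_{p_{k+1}}\le v_{p_{k+1}}\big)\to\gamma$ (same level $v_{p_k+1}=v_{p_{k+1}}$), which forces $p_{k+1}\le(p_k+1)t$ eventually, hence $p_{k+1}/p_k\to1$; then $\gamma^{n/p_{k+1}}-\gamma^{n/p_k}\le C_\gamma\big(p_{k+1}/p_k-1\big)$ uniformly in $n$, small for $k\ge K$, while for the finitely many blocks $k<K$ both $\P(M_n\le u)$ and $G^n(u)$ tend to $0$. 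As written, this step is missing, and it is the heart of the theorem's final clause.
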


\begin{proof} Theorem 1.3 of \cite{Jak91} establishes the equivalence of (ii) and (iii). Proposition 2.5 of \cite{Jak91} gives the equivalence of (iii) and (iv). Finally, Theorem 2 and Corollary 5 of \cite{Jak93} prove the equivalence of (ii) an (v). In particular, Theorem 1.3 of \cite{Jak91} or Corollary 5 of \cite{Jak93}  lead to a formula for a {\em discontinuous} phantom distribution function 
\begin{equation}\label{ephdfdisc}
\widetilde{G}(x)=\begin{cases} 0,&\text{if $x<v_1$,}\\
\gamma^{1/n},&\text{if $v_n\le x<v_{n+1}$,}\\
1,&\text{if $x\ge \sup\{v_n\,:\,n\in \GN\}$},
\end{cases}
\end{equation}
where $v_n$ is a {\em non-decreasing} sequence obtained in a simple way from the original
one (see Lemma 1 in \cite{Jak93}). For non-decreasing $\{v_n\}$ we can have that
\begin{align*}
 v_1 = v_2 = \cdots = v_{p_1} &< v_{p_1+1} = v_{p_1 + 2} = \cdots = v_{p_2} \\
& < v_{p_2+1} = v_{p_2 + 2} = \cdots = v_{p_3} \\
&\vdots \\
& < v_{p_k+1} = v_{p_k+2} =  \cdots = v_{p_{k+1}} < v_{p_{k+1}+1} \ldots,
\end{align*}
 for some sequence $p_1 < p_2 < \ldots$. Notice that $\widetilde{G}$ jumps only at points 
$v_{p_k}, k = 1,2,\ldots$ and the other elements of the sequence $\{v_n\}$ are not used in the construction of $\widetilde{G}$. We then construct a {\em continuous} phantom distribution function by setting $G(x) = \gamma^{g(x)}$, where
 \begin{equation}\label{eq:cphdf}
g(x)=\begin{cases} v_{p_1} - x + 1/p_1,&\text{if $x<v_{p_1}$,}\\
1/p_k, &\text{if $x= v_{p_k}$,}\\
\text{linearly} &\text{between $v_{p_{k}}$ and $v_{p_{k+1}}$}, k = 1,2, \ldots\\
0,&\text{if $x\ge \sup\{v_n\,:\,n\in \GN\}$}
\end{cases}
\end{equation}
By the very definition we have 
\begin{align*}
\widetilde{G}(x) = 0 < G(x) \leq \widetilde{G}(v_{p_1}), &\text{ if $x < v_{p_1}$},\\
\widetilde{G}(v_{p_k}) = \widetilde{G}(x)  \leq  G(x) < \widetilde{G}(v_{p_{k+1}}), 
 &\text{ if $v_{p_k} \leq x < v_{p_{k+1}}$}, \\
\widetilde{G}(x) = 1 = G(x), & \text{ if $x \geq \sup \{v_{n}\,;\, n \in \GN\}$}.
\end{align*}
We have to prove that
\begin{equation}\label{eqqu} G^n(u_n) - \widetilde{G}^n(u_n) \to 0,
\end{equation}
for every sequence $u_n \in \GR$, such that $u_n < \sup \{v_{n}\,;\, n\in \GN\}$.

 Set $v_{p_0} = -\infty$ and let $k_n\in\GN$ be such  that
\[ v_{p_{k_n - 1}}\leq u_n < v_{p_{k_n}},\ n\in\GN.\]
If $k_{n'} = 1$ along a subsequence $n'$, then
\[ \big|G^{n'}(u_{n'}) - \widetilde{G}^{n'}(u_{n'})\big| = G^{n'}(u_{n'}) < \gamma^{n'/p_1} \to 0.\]
Hence we can assume that  $k_{n} > 1, n\in\GN$. Then
\[
\big|G^n(u_n) - \widetilde{G}^n(u_n)\big| \leq \widetilde{G}^n(v_{p_{k_n}}) - \widetilde{G}^n(v_{p_{k_{n}-1}}) =: R(n).\]
Suppose that along some subsequence $n'$ we have $n' \big(1 - \widetilde{G}(v_{p_{k_{n'}}})\big) \to \infty$. Then 
\[ R(n') \leq \widetilde{G}^{n'}(v_{p_{k_{n'}}}) \to 0.\]
If, on the contrary, $n \big(1 - \widetilde{G}(v_{p_{k_{n}}})\big) \leq K  < \infty$, then
\begin{align*}
R(n) &\leq n\big( \widetilde{G}(v_{p_{k_n}}) - \widetilde{G}(v_{p_{k_{n}-1}}) \big)= 
n \Delta \widetilde{G}(v_{p_{k_n}}) \\
&= \frac{\Delta \widetilde{G}(v_{p_{k_n}})}{
1 - \widetilde{G}(v_{p_{k_n}})} n \big(1 - \widetilde{G}(v_{p_{k_n}})\big) \leq K
\frac{\Delta \widetilde{G}(v_{p_{k_n}})}{
1 - \widetilde{G}(v_{p_{k_n}})} \to 0, 
\end{align*}
for $\widetilde{G}$ is regular and therefore (\ref{e2}) holds.
Hence  (\ref{eqqu}) is always satisfied. 

When $\{v_n\}$ is strictly increasing, (\ref{eq:cphdf}) simplifies to (\ref{ephdf}). The theorem has been proved. 
\end{proof}

\begin{remark}\label{iid}
The above theorem states that an i.i.d. sequence $\{X_j\}$ with regular marginal distribution function $F$ admits a {\em continuous } phantom distribution function $G$. Thus from the point of view of limit theorems for maxima of i.i.d. sequences we can assume that the marginal distributions are continuous. We do not know whether such a reduction is always possible for weakly dependent stationary sequences.

The other consequence is that in every class of strict tail-equivalence of a~regular distribution function $F$ one can find a continuous representative $G$. 
\end{remark}
\begin{remark}\label{super-heavy}
It is important that $F$ {\em does not need to belong to the domain of attraction of any extreme value distribution}, while the corresponding sequence $\{v_n\}$ can have a quite regular form and be suitable for estimation. To see this, let us consider an i.i.d. sequence $\{X_j\}$ with a {\em super-heavy tail}, e.g. 
\[ 1 - F(x) = x^{-1/\sqrt{\log x}}, x > 1.\]
Since for any $c > 0$ we have $x^{-c}/1 - F(x) \to 0$ as $x\to\infty$, there are no normalizing sequences $a_n$ and $b_n$ such that 
\[ \P\big( \max_{1\leq j\leq n} X_j  \leq a_n x + b_n\big) \to G_{\rho}(x), \ x\in \GR,\]
where  $G_{\rho}$ is the standardized extreme value distribution with the extreme value index $\rho \in \GR$ (see \cite{dHFe06}, Theorem 1.1.3).  
On the other hand, if we set $v_n = n^{\log n}$, then $F^n(v_n) \to e^{-1}$.
\end{remark}

\begin{remark}\label{single}
It follows from (iii) or (iv) in the above theorem that the asymptotics of maxima of weakly dependent stationary sequences is {\em fully determined} by the behavior of $\P\big(M_n \leq v_n\big)$ along 
a {\em single} sequence of levels $\{v_n\}$. In particular, if (\ref{e3}) and (\ref{e4}) hold and  
\begin{equation}\label{e6}
F^n(v_n) = \exp\big( - n (1 - F(v_n) ) + o(1) \to \gamma' \in (0,1),
\end{equation}
then $\{X_j\}$ admits a phantom distribution function $G(x) = F^{\theta}(x)$,
where 
\[ \theta = \frac{\log \gamma}{\log \gamma'}\]
is {\em the extremal index}.
Notice the simplified (a single sequence!) form of our approach to the extremal index.

In the next result we shall cover also the case $\theta = 0$ and 
obtain the~final generalization of formula (4.2), p. 380 in \cite{Root88}, which was originally derived for Markov chains with regenerative structure.
\end{remark}

\begin{theorem}\label{ThEI}
Suppose that a stationary sequence $\{X_j\}$ with a regular marginal distribution function $F$ admits a regular phantom distribution function $G$. The following are equivalent:
\begin{description}
\item{\em (i)} The sequence $\{X_j\}$ has the extremal index $\theta \in [0,1]$.
\item{\em (ii)} There exists the limit
\begin{equation}\label{e1EI}
 \lim_{x\to F_*-} \frac{1 - G(x)}{1 - F(x)} (= \theta).
\end{equation}
\item{\em (iii)}  There exist: a sequence $\{v_n\}$, $v_n \nearrow F_*$, and numbers $\gamma \in (0,1)$, $\gamma' \in [0,1)$ such that 
\begin{equation}\label{e1EII}
\lim_{n\to\infty} G^n(v_n) \to \gamma,\quad \lim_{n\to\infty} F^n(v_n) = \gamma'.
\end{equation}
\end{description}
If the limits in (iii) do exist, then $\theta  = \frac{\log \gamma}{\log \gamma'}$ if $\gamma' > 0$ and $\theta = 0$ if $\gamma' = 0$.
\end{theorem}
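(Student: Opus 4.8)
The plan is to establish the chain $\text{(iii)} \Rightarrow \text{(ii)} \Rightarrow \text{(i)}$ together with $\text{(i)} \Rightarrow \text{(ii)} \Rightarrow \text{(iii)}$, the concluding formula for $\theta$ dropping out of the step $\text{(iii)} \Rightarrow \text{(ii)}$. First I would record three preliminary facts. (a) Since $G$ is a phantom distribution function for $\{X_j\}$ with marginal $F$, testing (\ref{e1}) against constant sequences $u_n \equiv x_0$ excludes both $G_* < F_*$ and $G_* > F_*$, so $G_* = F_*$. (b) If $F$ is regular then so is $F^\theta$ for every $\theta \in (0,1]$, because $F(x), F(x-) \to 1$ as $x \to F_*-$ and $1 - t^\theta \sim \theta(1-t)$ as $t \to 1-$; the same asymptotics give $1 - F^\theta(x) \sim \theta(1 - F(x))$ as $x \to F_*-$, which I will use repeatedly. (c) When the limit in (ii) exists it automatically lies in $[0,1]$: the elementary bound $\P(M_n \le u) \ge 1 - n(1 - F(u))$ combined with the phantom property of $G$, applied along levels $w_n \uparrow F_*$ with $n(1 - F(w_n)) \to s$, forces $e^{-\theta s} \ge 1 - s$ for all small $s > 0$, hence $\theta \le 1$.

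For $\text{(iii)} \Rightarrow \text{(ii)}$ I would reuse the interpolation device from the proof of Proposition \ref{prop:tailequiv}. Regularity of $G$ turns $G^n(v_n) \to \gamma \in (0,1)$ into $n(1 - G(v_n)) \to -\log\gamma \in (0,\infty)$, while $F^n(v_n) \to \gamma'$ gives $n(1 - F(v_n)) \to -\log\gamma'$ if $\gamma' > 0$ and $n(1 - F(v_n)) \to \infty$ if $\gamma' = 0$. For an arbitrary sequence $x_k \to F_*-$, define $m_k$ by $v_{m_k} \le x_k < v_{m_k+1}$; then $m_k \to \infty$, and sandwiching $1-G$ and $1-F$ between their values at $v_{m_k}$ and $v_{m_k+1}$ gives $m_k(1 - G(x_k)) \to -\log\gamma$ and $m_k(1 - F(x_k)) \to -\log\gamma'$ (resp.\ $\to \infty$). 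Dividing yields $\tfrac{1 - G(x_k)}{1 - F(x_k)} \to \tfrac{\log\gamma}{\log\gamma'}$ (resp.\ $\to 0$), and since $\{x_k\}$ was arbitrary this is exactly (ii) with $\theta = \log\gamma/\log\gamma'$ when $\gamma' > 0$ and $\theta = 0$ when $\gamma' = 0$, which is also the concluding formula.

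For $\text{(ii)} \Rightarrow \text{(i)}$ I would split on the value of $\theta$. If $\theta > 0$, then $1 - G(x) \sim \theta(1 - F(x)) \sim 1 - F^\theta(x)$ as $x \to F_*-$, so $G$ and $F^\theta$ are strictly tail-equivalent; both are regular, $G$ is a phantom distribution function, so by the converse part of Theorem \ref{thm:equiv} $F^\theta$ is one too, i.e.\ $\{X_j\}$ has extremal index $\theta$. If $\theta = 0$, take any $\{u_n\}$ with $n(1 - F(u_n)) \to \tau \in (0,\infty)$; then $u_n \to F_*-$, whence $\tfrac{1 - G(u_n)}{1 - F(u_n)} \to 0$, so $n(1 - G(u_n)) \to 0$ and $G^n(u_n) \to 1$; since $G$ is a phantom distribution function, $\P(M_n \le u_n) \to 1$, which is precisely (\ref{eq: exzero})--(\ref{eq: uentau}).

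For $\text{(i)} \Rightarrow \text{(ii)}$: if $\theta > 0$, then $F^\theta$ is a regular phantom distribution function, hence strictly tail-equivalent to $G$ by Theorem \ref{thm:equiv}, and $\tfrac{1-G(x)}{1-F(x)} = \tfrac{1-G(x)}{1-F^\theta(x)} \cdot \tfrac{1-F^\theta(x)}{1-F(x)} \to 1 \cdot \theta$. If $\theta = 0$ and (ii) fails, pick $x_k \to F_*-$ with $\tfrac{1-G(x_k)}{1-F(x_k)} \ge \delta > 0$; passing to a subsequence I may assume the integers $n_k := \lfloor 1/(1-F(x_k)) \rfloor$ are strictly increasing, and I build a sequence of levels by setting $u_{n_k} = x_k$ and taking $u_n$ equal to the $(1 - 1/n)$-quantile of $F$ for $n \notin \{n_k\}$. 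Regularity of $F$ makes $n(1-F(u_n)) \to 1$ along both interleaved subsequences, hence for the whole sequence, so extremal index $0$ plus the phantom property of $G$ give $\tfrac{1-G(u_n)}{1-F(u_n)} \to 0$; evaluating at $n = n_k$ contradicts the choice of $\{x_k\}$. Finally, once (ii) holds, $\text{(ii)} \Rightarrow \text{(iii)}$ is short: pick $\gamma \in (0,1)$ and the non-decreasing levels $v_n$ with $G^n(v_n) \to \gamma$, $v_n \uparrow F_*$; then $n(1 - G(v_n)) \to -\log\gamma$ and $\tfrac{1-G(v_n)}{1-F(v_n)} \to \theta$, so $n(1 - F(v_n))$ tends to $-\log\gamma/\theta$ (to $\infty$ if $\theta = 0$) and $F^n(v_n) \to \gamma^{1/\theta} \in (0,1)$ (to $0$ if $\theta = 0$), giving (iii) with the announced value of $\theta$. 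I expect the $\theta = 0$ case to be the main obstacle, precisely because one must translate the ``for-every-sequence-$\{u_n\}$'' form of the extremal index into a pointwise statement about the tail ratio $\tfrac{1-G}{1-F}$ and back; the padding-by-quantiles construction above is the device that makes this translation work in both directions.
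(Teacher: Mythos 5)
Your proof is correct, and while it relies on the same two core tools as the paper --- Theorem \ref{thm:equiv} (strict tail-equivalence of phantom distribution functions) for the case $\theta\in(0,1]$, and the sandwich interpolation between consecutive levels $v_{m_k}\le x_k<v_{m_k+1}$ borrowed from the proof of Proposition \ref{prop:tailequiv} --- the logical routing in the degenerate case is genuinely different from the paper's. The paper handles $\gamma'=0$ by proving (iii)$\Rightarrow$(i) directly, via the functional relation $G^{[nt]}(v_n)\to\gamma^t$ and a comparison of the levels $u_{n_k}$ with $v_{[n_k/t]}$, which is the most delicate step of its argument; you bypass this entirely by proving (iii)$\Rightarrow$(ii) through a single sandwich computation that treats $\gamma'>0$ and $\gamma'=0$ uniformly (with $m_k(1-F(x_k))\to-\log\gamma'$ or $\to\infty$), after which (ii) with $\theta=0$ yields (i) immediately from the definition of extremal index zero, since $n(1-F(u_n))\to\tau$ forces $n(1-G(u_n))\to 0$ and the phantom property converts $G^n(u_n)\to1$ into $\P(M_n\le u_n)\to1$. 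Conversely, for (i)$\Rightarrow$(ii) with $\theta=0$ the paper uses a direct sandwich along a single sequence with $n(1-F(u_n))\to\tau$, which is a bit leaner than your contradiction argument with the interleaved $(1-1/n)$-quantile padding, though the latter is sound: the quantile levels satisfy $n(1-F(u_n))\to1$ precisely by O'Brien's regularity condition (\ref{e2}), and evaluating the resulting ratio limit at $n=n_k$ kills the assumed exceptional subsequence. Your preliminary observations --- that $G_*=F_*$ follows from testing the phantom property at constant levels, and that any limit in (\ref{e1EI}) automatically lies in $[0,1]$ via $\P(M_n\le w_n)\ge 1-n(1-F(w_n))$ --- are points the paper leaves implicit, and the second one is actually needed for (ii)$\Rightarrow$(i) to land in the range $\theta\in[0,1]$ claimed in (i); including them is a small but real gain in completeness.
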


\begin{proof}
First suppose that the extremal index exists and is equal $\theta \in (0,1]$. This means that $F^{\theta}$ is another phantom distribution function for $\{X_j\}$. By Theorem \ref{thm:equiv} $F^{\theta}$ and $G$ are strictly tail-equivalent. Hence
\[ \lim_{x\to F_*-}\frac{1-G(x)}{1 - F(x)} = \lim_{x\to F_*-}\frac{1 - G(x)}{1 - F^{\theta}(x)}\frac{1 - F^{\theta}(x)}{1 - F(x)} = \theta.\]
Conversely, if the above limit is $\theta$, then $G$ and $F^{\theta}$ are strictly tail-equivalent. Moreover, by (ii) in Proposition \ref{prop:tailequiv} this is equivalent to the existence of $\gamma \in (0,1)$ and a non-decreasing sequence $\{v_n\}$ such that $G^n(v_n) \to \gamma$ and $(F^{\theta})^n(v_n) \to \gamma$, or equivalently $F^n(v_n) \to \gamma^{1/\theta} = \gamma'$. This proves the theorem in the case $\theta \in (0,1]$.

Now suppose that the extremal index of $\{X_j\}$ is $0$. Since we assume that $F$ is regular,  there is a nondecreasing sequence $\{u_n\}$ such that $n\big(1 - F(u_n)\big) \to \tau > 0$. By the definition of the extremal index $\theta = 0$ this implies $G^n(u_n) \to 1$ or $n\big(1 - G(u_n)\big) \to 0$.  Now we follow the proof of Proposition \ref{prop:tailequiv}. Take $x_n \nearrow F_*$ and define numbers $m_n$ by
\[u_{m_n} \leq x_n < u_{m_n + 1}.\]
Then $m_n \big( 1 - F(x_n)\big) \to \tau$ while $m_n \big( 1 - G(u_n)\big) 
\to 0$ and so
\[ \lim_{n\to\infty} \frac{1- G(x_n)}{1 - F(x_n)} = \lim_{n\to\infty} \frac{m_n\big(1- G(x_n)\big)}{m_n\big(1 - F(x_n)\big)}= 0.\]
Since  $x_n \nearrow F_*$ was arbitrary, (\ref{e1EI}) follows.

Now assume (\ref{e1EI}) and suppose that $G^n(v_n) \to \gamma \in (0,1)$ for some sequence $v_n$. If along some subsequence $\{n_k\}$
\[\sup_k n_k \big(1 - F(v_{n_k}\big) \leq M <+\infty,\]
then for $1 > \delta >\gamma$ and large $k$
\begin{align*}
1 > \delta \geq  G^{n_k}(v_{n_k}) &= \exp\big( - n_k (1-G(v_{n_k}))\big) + o(1)  \\
&=  \exp\big( - n_k (1- F(v_{n_k})\big) ^{(1-G(v_{n_k}))/(1 - F(v_{n_k}))} + o(1) \\
& \geq  \exp\big( - C^{(1-G(v_{n_k}))/(1 - F(v_{n_k}))})\big) + o(1)  \to 1.
\end{align*}
This is a contradiction and so $n (1 - F(v_n)) \to \infty$ and (\ref{e1EII}) follows with $\gamma' = 0$.

It remains to show that (\ref{e1EII}) with given $\{v_n\}$, $\gamma \in (0,1)$ and $\gamma' = 0$ implies that the extremal index is $0$. 
First notice that for each $ t\geq 0$ and as $n\to\infty$
\begin{equation}\label{efunct}
G^{[nt]}(v_n) = \big(G^n(v_n)\big)^t + o(1) = \gamma^t + o(1).
\end{equation}
Then observe that $\gamma' =0$ gives 
\begin{equation}\label{eqinfty}
n \big( 1 - F(v_n)) \to \infty.
\end{equation}
 Now suppose that for some $\tau \in (0,+\infty)$
\begin{equation}\label{ejakies}
 n (1 - F(u_n)) \to \tau,
\end{equation}
and along a subsequence $\{n_k\}$
\begin{equation}\label{ebeta}
 \lim_{k\to\infty} G^{n_k}(u_{n_k}) = \beta < 1.
\end{equation} 
Take  $t > 0$ so small that $\gamma^{t} > \beta$. By (\ref{ebeta}) and (\ref{efunct})
\[
 \lim_{k\to\infty} G^{n_k}(u_{n_k}) =\beta < \gamma^{t} = \lim_{k\to\infty} G^{n_k}\big(v_{[n_k/t]}\big).
\]
It follows that eventually $u_{n_k} < v_{[n_k/t]}$, hence by (\ref{eqinfty})
and (\ref{ejakies})
\begin{align*}
+\infty > \tau = \lim_{k\to\infty}  n_k \big(1 - F(u_{n_k})\big) &\geq \limsup_{k\to\infty} n_k \big(1 - F(v_{[n_k/t]})\big)\\
 &\geq \limsup_{k\to\infty} (1/t) 
[n_k/t] \big(1 - F(v_{[n_k/t]})\big) = +\infty.
\end{align*}
 We thus obtained a contradiction and so  $\lim_{n\to\infty} \P\big( M_{n} \leq u_{n} \big) = G^n(u_n) + o(1) = 1$ for any sequence satisfying
(\ref{ejakies}).
\end{proof}

\begin{remark}\label{Betebeinfty}
Condition $B_{\infty}(v_n)$ is considered here for its elegance and concise form. Theorem \ref{th1} shows that it is a synonym for the statement 
``Condition $B_{T}(v_n)$ holds for each $T > 0$''. And it is the  the latter that is checkable in most models, as shown in Section   \ref{sect_proofs}.
\end{remark}

\begin{remark}\label{AIMnot}
\cite{OBr87} introduced Condition AIM$(v_n)$ which was the direct inspiration for our Conditions $B_{\infty}(v_n)$ and $B_T(v_n)$. A stationary sequence $\{X_j\}$ is said to have asymptotic independence of maxima (AIM) relative to a sequence $\{v_n\}$ of real numbers 
if there exists a sequence $r_n$ of positive integers with $r_n = o(n)$ such that 
\[
\max_{p,q,r \geq r_n,\atop \ p+r+q \leq  n}\left| \P\big(M_p \leq v_n, M_{p+r, p+r+q} \leq v_n,\big) - \P\big(M_p\leq v_n\big) \P\big(M_q\leq v_n\big)\right| 
\to 0,
\]
as $n\to\infty$, where $M_{m,n} = \max_{m\leq j \leq n} X_j$. 

We prefer conditions like  $B_{\infty}(v_n)$ and $B_{T}(v_n)$ for three reasons. First, as we could see above, they are necessary and being independent of any separating
sequence $r_n$ are much more convenient in theoretical considerations. Second, finding the proper length of the separating gap $r_n$ does not need to be easy, as the proofs given in Section  \ref{sect_proofs} show. And finally - the name AIM is misleading, for there might be no asymptotic independence at all, as it is demonstrated by the following theorem.
\end{remark}

\begin{theorem}\label{th2}
There exists a stationary sequence $\{X_j\}$ which admits a continuous phantom distribution function, has the extremal index $\theta = 0$ and is non-ergodic.
\end{theorem}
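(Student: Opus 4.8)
The plan is to build the example as a mixture (over a hyperparameter) of i.i.d. sequences, so that exchangeability and non-ergodicity come for free from de Finetti-type considerations, while the extremal-index-zero behaviour and the existence of a continuous phantom distribution function are arranged by choosing the mixing distribution carefully. Concretely, I would let $\Theta$ be a random variable taking values in some parameter set, and conditionally on $\Theta$ let $\{X_j\}$ be i.i.d. with a conditional distribution function $F_\Theta$. The unconditional sequence is then strictly stationary and exchangeable; it is non-ergodic precisely because the tail $\sigma$-field contains the (non-degenerate) information about $\Theta$, so the sequence is a genuine mixture of distinct ergodic components. The marginal is $F(x) = \E F_\Theta(x)$, and one has the freedom to make $F$ (and later $G$) continuous by smoothing.

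The heart of the matter is to make $\P(M_n \le u_n) \to 1$ whenever $n(1-F(u_n)) \to \tau \in (0,\infty)$ (extremal index $0$), while simultaneously producing a single sequence $\{v_n\}$, $v_n \nearrow F_*$, with $\P(M_n \le v_n) \to \gamma \in (0,1)$ and Condition $B_\infty(v_n)$, so that Theorem \ref{th1} delivers a continuous phantom distribution function. For a mixture, $\P(M_n \le u) = \E\big[ F_\Theta(u)^n \big]$. The trick is to choose the family $\{F_\Theta\}$ so that for the ``$\tau$-levels'' $u_n$ the conditional survival function $1-F_\Theta(u_n)$ is, for almost every $\Theta$, of strictly smaller order than $1/n$ — e.g. by arranging that $\Theta$ controls the tail index and that for each fixed value of $\Theta$ the conditional tail is lighter than the averaged tail $1-F$, which is heavy because it is dominated by the atypically heavy-tailed components. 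Then $F_\Theta(u_n)^n \to 1$ a.s., and by dominated convergence $\P(M_n \le u_n) \to 1$, i.e. $\theta = 0$. On the other hand, for $B_\infty(v_n)$ one computes, conditionally on $\Theta$,
\[
\P(M_{p+q} \le v_n) = \E\big[ F_\Theta(v_n)^{p+q}\big], \qquad \P(M_p\le v_n)\,\P(M_q \le v_n) = \E\big[F_\Theta(v_n)^p\big]\,\E\big[F_\Theta(v_n)^q\big],
\]
so the quantity to control is a covariance, $\cov\big(F_\Theta(v_n)^p, F_\Theta(v_n)^q\big)$. Picking $\{v_n\}$ so that $F_\Theta(v_n)^n$ concentrates — i.e. $n(1-F_\Theta(v_n))$ converges in probability (or a.s.) to a constant $c$ not depending on $\Theta$, giving $\gamma = e^{-c}$ — forces $F_\Theta(v_n)^p$ to be asymptotically deterministic for every $p$ in the relevant range, hence the covariance vanishes and $B_\infty(v_n)$ holds. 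This is exactly the ``Concentration assumption'' flavour the introduction alludes to; the design constraint is that the mixing variable must matter at the $\tau$-scale (to kill the extremal index) but wash out at the $v_n$-scale (to give a phantom distribution function).

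Carrying this out, the steps in order are: (1) fix the parametric family and the law of $\Theta$ explicitly — a natural choice is Pareto-type conditional tails $1-F_\Theta(x) \sim x^{-\Theta}$ with $\Theta$ supported on an interval $[a,b]\subset(0,\infty)$, so the averaged tail behaves like $x^{-a}$ (governed by the heaviest component) while each conditional tail is genuinely lighter; (2) verify stationarity, exchangeability and, via the tail $\sigma$-field, non-ergodicity; (3) compute $F$, check it is regular, and if desired smooth it to be continuous; (4) produce the levels $v_n$ (here $v_n \asymp n^{1/a}$ up to a constant) and check $n(1-F_\Theta(v_n)) \to c$ in probability with $c$ independent of $\Theta$ — this is where one must be careful that the slowly-varying corrections and the randomness of $\Theta$ do not destroy concentration, possibly by rescaling $v_n$ logarithmically as in Remark \ref{super-heavy}; (5) deduce $\P(M_n \le v_n) \to \gamma = e^{-c} \in (0,1)$ and $B_\infty(v_n)$ from the covariance computation, then invoke Theorem \ref{th1} for the continuous phantom distribution function; (6) take any $\tau$-level sequence $u_n$ and show $1-F_\Theta(u_n) = o(1/n)$ a.s., whence $\P(M_n \le u_n) \to 1$ and $\theta = 0$. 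The main obstacle I expect is step (4)–(5): making a single deterministic sequence $\{v_n\}$ simultaneously the ``concentration scale'' for every conditional law $F_\Theta$ — if the conditional tails carry their own slowly varying factors depending on $\Theta$, $n(1-F_\Theta(v_n))$ will in general be random in the limit, and one must choose the family so that this limit is a genuine constant; the cleanest fix is to take conditional laws that are exact powers (pure Pareto, or exponential of a power) with $\Theta$ appearing only in the exponent, so that the $v_n$-scale and the $\tau$-scale separate by a clean gap.
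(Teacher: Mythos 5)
Your high-level plan is the same as the paper's: a mixture of i.i.d.\ sequences, non-ergodicity from the invariant sets determined by the mixing variable, a single driving sequence $\{v_n\}$ fed into Theorem~\ref{th1}, and extremal index zero because the mixture tail is much heavier than each conditional tail. The gap is exactly at your steps (4)--(5), and it is not a technical nuisance but a fatal obstruction for the families you propose. Since $\P\big(M_{[nt]}\le v_n\big)=\E\big[F_\Theta(v_n)^{[nt]}\big]$, condition (v) of Theorem~\ref{th1} forces $n\big(1-F_\Theta(v_n)\big)$ to converge in distribution to a genuine constant $c\in(0,\infty)$ (uniqueness of Laplace transforms), so there is no way around concentration by a clever choice of levels. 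But for pure Pareto tails $1-F_\theta(x)=x^{-\theta}$ with $\Theta$ atomless on an interval (and likewise for $1-F_\theta(x)=\exp(-x^{\theta})$), whatever $\{v_n\}$ you pick, $n\,v_n^{-\Theta}$ tends to $0$ for $\Theta$ above a critical index determined by $\{v_n\}$ and to $+\infty$ below it; it is never asymptotically a positive constant on a set of full probability. Consequently, whenever $\P(M_n\le v_n)\to\gamma\in(0,1)$ one gets $\P\big(M_{[nt]}\le v_n\big)\to\gamma$ for \emph{every} $t>0$, a limit independent of $t$; then (v) fails and $B_{\infty}(v_n)$ would force $\gamma=\gamma^2$. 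Since this happens for every candidate $\{v_n\}$, such a mixture admits no regular (hence no continuous) phantom distribution function at all. So the ``cleanest fix'' you suggest (exact powers with $\Theta$ only in the exponent) is precisely the situation in which the construction breaks down: the obstacle you flagged is fatal for any family in which $\Theta$ changes the order of the tail.

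The paper resolves this by making all conditional laws share \emph{literally the same upper tail} and letting the mixing variable control only where that tail begins. With a strictly increasing $\{v_n\}$ it takes the purely atomic law $F_k$ with $F_k(x)=0$ for $x<v_{k^2}$ and $F_k(x)=1-1/n$ for $v_n\le x<v_{n+1}$, $n\ge k^2$, and $\P(\Pi=k)=1/(k(k+1))$. Then for each fixed $k$ one has $n\big(1-F_k(v_n)\big)=1$ for all large $n$, so $\P\big(M_{[nt]}\le v_n\big)\to e^{-t}$ and Theorem~\ref{th1}(v) yields a continuous phantom distribution function; at the same levels, $n\,\P(X_1>v_n)\ge n/([\sqrt n]+1)\to\infty$ because the components with $k>\sqrt n$ still have all their mass above $v_n$, and Theorem~\ref{ThEI}(iii) gives $\theta=0$. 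In other words, the randomness must enter through a heavily weighted random ``starting point'' of a common tail, not through a random tail index. Two smaller remarks: you do not need to smooth the marginal (the paper's $F$ is purely atomic; continuity of the phantom distribution function is delivered by Theorem~\ref{th1} itself), and your direct verification of $\theta=0$ at the $\tau$-levels is a legitimate alternative to invoking Theorem~\ref{ThEI}, provided you also check that $F$ is regular so that such levels exist.
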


\begin{proof}
We shall construct $\{X_j\}$ as a mixture of i.i.d. sequences. Let $\Omega = \GN \times \GR^{\infty}$ and let $\Pi((k, x_1, x_2, \ldots)) = k$, $X_j((k, x_1, x_2, \ldots)) = x_j$, for $j=1,2,\ldots$. Choose  a strictly increasing sequence $\{v_n\} \in \GR$ and for $k\in \GN$ define a purely jump distribution function $F_k$  by
\[ F_k ( x)  =\begin{cases} 0  & \text{ if  $x < v_{k^2}$},\\
1 - 1/n &\text{ if  $ v_n \leq x < v_{n+1},\ n \geq k^2$}.\end{cases}\]
Now set 
\[ \P(\Pi = k) = \frac{1}{k(k+1)},\quad k = 1,2,\ldots,\]
and define the conditional distribution of $(X_1,X_2,\ldots)$ given $\Pi=k$ as a product $\mu_k \times \mu_k \times \mu_k \times \cdots $, where the probability measure $\mu_k$ corresponds to the distribution function $F_k$.

We have 
\[ \P\big( M_n \leq v_n\big) = \sum_{k=1}^{\infty} \P(\Pi=k) F_k^n(v_n) \]
and for each $k$
\[  F_k^n(v_n) = \exp( - n (1 - F_k(v_n))) + o(1).\]
But
\[ n (1 - F_k(v_n)) = n\Big(  I( k > \sqrt{n}) + (1/n) I( k \leq \sqrt{n})\Big) = 1 \text{ if $n$ is large enough},\]
and so 
\[ \P\big( M_n \leq v_n\big) \to \exp(-1).\]
Similarly, for each $t \geq 0$
\begin{align}\label{eexp}
 \P\big( M_{[nt]} \leq v_n\big) &= \sum_{k=1}^{\infty} \P(\Pi=k) 
F_k^{[nt]}(v_n) \nonumber \\
&=  \sum_{k=1}^{\infty} \P(\Pi=k) \exp( - [nt] (1 - F_k(v_n))) + o(1) \\
& \to  \exp(-t). \nonumber
\end{align}
Hence it follows from Theorem \ref{th1} (v) that $\{X_j\}$ admits a continuous phantom distribution function $G$ given by  (\ref{ephdf}) with $\gamma = e^{-1}$. In particular $G^n(v_n) \to \gamma \in (0,1)$. In view of (iii) in Theorem \ref{ThEI}, $\{X_j\}$ has the extremal index $\theta = 0$
provided  $n \P\big( X_1 > v_n\big) \to \infty$. This is so, indeed.
\begin{align}\label{infinity}
n \P\big( X_1 > v_n\big) &=  \sum_{k=1}^{\infty} \P(\Pi=k)  n (1 - F_k(v_n)) \nonumber \\
&= n \sum_{k=1}^{\infty} \frac{1}{k(k+1)} \Big( 1 I( k > \sqrt{n}) + (1/n) I( k \leq \sqrt{n})\Big) \nonumber\\
&= n \sum_{k = [\sqrt{n}] + 1}^{\infty}  \frac{1}{k(k+1)} + \sum_{k=1}^{[\sqrt{n}]} \frac{1}{k(k+1)} \nonumber \\
& \geq \frac{n}{ [\sqrt{n}] + 1} \to \infty.
\end{align}

To complete the proof let us notice that any set $\{\Pi = k\}$ is invariant for our stationary sequence and so
$\{X_j\}$ is non-ergodic.   
\end{proof}

\section{Phantom distributions for regenerative processes}
\label{sect_reg}
Stochastic processes with regenerative structure provide a natural framework for comparison with {\em some}  i.i.d. sequence. We refer to  Chapters VI and VII in \cite{Asmu03} for a general theory of regenerative processes. Here we shall adopt a minimal formalism, corresponding to limit theorems for maxima. Suppose that there exist integer-valued random variables 
$0 < S_0 < S_1 < S_2 < \cdots $, representing ``regeneration times". Denote by $$W_0= S_0, W_1 = S_1 - S_0, W_2 = S_2 - S_1, \ldots, $$ the length of the consecutive regeneration cycle
and by 
\[ Y_0 = \max_{0 \leq j <  S_0} X_j,\  \ Y_1 = \max_{S_0 \leq j < S_1} X_j, \ \ Y_2 = \max_{S_1 \leq j < S_2} X_j,\ldots \]
the maxima over the regeneration cycles. We assume that 
\begin{equation}\label{eq:reg1}
\begin{aligned}
\big( W_0, Y_0\big), \big( W_1, Y_1\big), \big( W_2, Y_2\big), &\ldots,\text{ are independent,}\\
\big( W_1, Y_1\big), \big( W_2, Y_2\big), &\ldots, \text{ are identically distributed.}
\end{aligned}
\end{equation} 
With this notation we have the following variant of Theorem 3.1 in \cite{Root88}.

\begin{theorem}\label{thm:reg}
If $\mu = \E W_1 < +\infty$ and 
\begin{equation}\label{eq:zerocycle}
\P\big( Y_0 > \max_{1 \leq j \leq n} Y_j\big) \to 0,\ \text{ as $n\to \infty$},
\end{equation}   
then $\{X_j\}$ admits a continuous phantom distribution function if, and only if, 
the distribution function of $Y_1$ is regular. 

In such a case $G(x) = \P^{1/\mu}\big( Y_1 \leq x\big)$ is a regular phantom distribution function for $\{X_j\}$.
\end{theorem}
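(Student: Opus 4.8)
The plan is to express $M_n$ as, up to an asymptotically negligible error, the maximum of the cycle maxima $Y_1,\dots,Y_{\nu(n)}$, where $\nu(n)=\#\{i\ge 1:S_i\le n\}$ is the number of regeneration cycles completed inside $[S_0,n]$, and then to exploit the renewal law of large numbers, $\nu(n)/n\to 1/\mu$ almost surely. This is essentially the mechanism behind Theorem~3.1 of \cite{Root88}, recast so as to deliver a \emph{regular} (hence, via Theorem~\ref{th1}, a continuous) phantom distribution function together with the converse.

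\textbf{Sufficiency.} Assume that $H$, the distribution function of $Y_1$, is regular. First I would get rid of the initial cycle: since $S_0$ is a.s.\ finite, $\P(S_0>n)\to0$, and on $\{S_0\le n\}$ the events $\{M_n\le u\}$ and $\{\max_{S_0\le j\le n}X_j\le u\}$ differ only on a subset of $\{Y_0>\max_{1\le i\le\nu(n)}Y_i\}$, whose probability tends to $0$ by \eqref{eq:zerocycle} because $\nu(n)\to\infty$ a.s.; this yields $\sup_u|\P(M_n\le u)-\P(\max_{S_0\le j\le n}X_j\le u)|\to0$. Since the overshoot into the last, incomplete cycle is dominated by $Y_{\nu(n)+1}$, one has the sandwich
\[\P(\max_{1\le i\le\nu(n)+1}Y_i\le u)\le\P(\max_{S_0\le j\le n}X_j\le u)\le\P(\max_{1\le i\le\nu(n)}Y_i\le u).\]
It remains to replace the random count $\nu(n)$ by $n/\mu$. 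For fixed $\eps>0$, the event $A_n=\{(1-\eps)n/\mu\le\nu(n)\le(1+\eps)n/\mu\}$ satisfies $\P(A_n^c)\to0$ by the renewal SLLN, and on $A_n$ monotonicity in the number of i.i.d.\ cycle maxima gives, uniformly in $u$,
\[H^{\lceil(1+\eps)n/\mu\rceil+1}(u)-\P(A_n^c)\le\P(\max_{S_0\le j\le n}X_j\le u)\le H^{\lfloor(1-\eps)n/\mu\rfloor}(u)+\P(A_n^c).\]
Now $H^{\lfloor(1\pm\eps)n/\mu\rfloor}(u)=(H^{n/\mu}(u))^{1\pm\eps+o(1)}$, and since $\sup_{\gamma\in[0,1]}|\gamma^{s}-\gamma|\to0$ as $s\to1$, letting $n\to\infty$ and then $\eps\to0$ gives $\sup_u|\P(M_n\le u)-H^{n/\mu}(u)|\to0$; that is, $G=H^{1/\mu}$ is a phantom distribution function. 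As $\mu\ge1$, $H^{1/\mu}$ is a genuine distribution function, and it is regular since $1-H^{1/\mu}(x)\sim(1-H(x))/\mu$ as $x\to H_*-$, so both relations in \eqref{e2} pass from $H$ to $G$. \emph{The step I expect to be the main obstacle} is exactly this replacement of $\nu(n)$ by $n/\mu$: the pairs $(W_i,Y_i)$ need not be independent, so $\nu(n)$ is not independent of $(Y_1,Y_2,\dots)$ and one cannot merely substitute; it is the concentration of $\nu(n)$ around $n/\mu$ together with the $\eps$-sandwich and the uniform convergence of $\gamma\mapsto\gamma^{s}$ at $s=1$ that makes it go through.

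\textbf{Necessity.} Suppose $\{X_j\}$ admits a continuous (equivalently, by Theorem~\ref{th1}, a regular) phantom distribution function $G_0$; I would verify \eqref{e2} for $H$ by ruling out its two failure modes. The reduction away from $Y_0$, the cycle sandwich and the concentration of $\nu(n)$ used only $\mu<\infty$ and \eqref{eq:zerocycle}, not the regularity of $H$, so for each fixed $\eps\in(0,1)$, uniformly in $u$, $H^{\lceil(1+\eps)n/\mu\rceil+1}(u)-o(1)\le\P(M_n\le u)\le H^{\lfloor(1-\eps)n/\mu\rfloor}(u)+o(1)$. If $H(H_*-)<1$ (so $H_*<\infty$ and $Y_1$ has an atom at its essential supremum), then $M_n\le H_*$ a.s.\ while $\P(M_n\le u)\to0$ for every $u<H_*$, so $\P(M_n\le u)\to\mathbf 1_{[H_*,\infty)}(u)$; matching this against $G_0^n$ forces $G_{0*}=H_*$, and picking $v_n\nearrow H_*$ with $G_0^n(v_n)\to\gamma\in(0,1)$ (possible as $G_0$ is regular) gives $\sup_u|\P(M_n\le u)-G_0^n(u)|\ge\gamma+o(1)$, a contradiction. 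If instead $H(H_*-)=1$ but there are $x_k\nearrow H_*$ with $(1-H(x_k-))/(1-H(x_k))\ge1+\delta$, I would fix $\eps<\delta/(2+\delta)$ and pick $n_k$ with $n_k(1-H(x_k))\to\mu\tau$, where $\tau$ maximizes $e^{-(1+\eps)\tau}-e^{-(1-\eps)(1+\delta)\tau}$ (positive by the choice of $\eps$); evaluating the displayed inequalities at $x_k$ and at $x_k-$ then bounds $\P(M_{n_k}\le x_k)-\P(M_{n_k}\le x_k-)$ below by a positive constant, while $G_0^{n_k}$ is continuous, once more contradicting $\sup_u|\P(M_n\le u)-G_0^n(u)|\to0$. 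Hence $H$ satisfies \eqref{e2}, and by the sufficiency part $G=H^{1/\mu}$ is the asserted regular phantom distribution function.
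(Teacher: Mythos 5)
Your proof is correct, but it takes a genuinely different route from the paper's, which is essentially a two-line reduction: the paper cites Rootz\'en (1988, p.~375) for the unconditional fact that $\sup_x\big|\P(M_n\le x)-G^n(x)\big|\to0$ with $G=\P^{1/\mu}(Y_1\le\cdot)$, and then obtains sufficiency from Theorem \ref{th1} (a regular phantom distribution function yields a continuous one) and necessity from Theorem \ref{thm:equiv} (every phantom distribution function of a sequence admitting a regular one is itself regular, so $G$, hence $H=G^{\mu}$, is regular). You instead re-derive the Rootz\'en approximation from scratch by the renewal sandwich: discarding the initial cycle via \eqref{eq:zerocycle}, squeezing $M_n$ between the maxima of $\nu(n)$ and $\nu(n)+1$ i.i.d.\ cycle maxima, replacing $\nu(n)$ by $n/\mu$ through the renewal SLLN combined with monotonicity on the event $A_n$ (which correctly handles the dependence between $\nu(n)$ and the $Y_i$ that you flag as the delicate point), and finishing with the uniform convergence of $\gamma\mapsto\gamma^{s}$ as $s\to1$; this buys self-containedness at the cost of length. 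For necessity you rule out the two failure modes of \eqref{e2} by hand (an atom at the right endpoint; a left-limit ratio bounded away from $1$ along a subsequence), exploiting that your sandwich never used regularity of $H$; both case analyses are sound, with only minor remarks: the claim that $M_n\le H_*$ a.s.\ is not needed (the sandwich already gives $\P(M_n\le u)\to\1_{[H_*,\infty)}(u)$), and the identification $G_{0*}=H_*$ deserves the one-line comparison of $G_0^n(u)$ with the sandwich for $u$ between the two right endpoints. In fact, since your sufficiency argument also never used regularity of $H$, you have effectively reproved Rootz\'en's Theorem 3.1 in full; having observed this, your necessity step could be replaced by the paper's appeal to Theorem \ref{thm:equiv}, since $H^{1/\mu}$ is then a phantom distribution function strictly tail-equivalent to the continuous (hence regular) one.
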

\begin{proof} \cite{Root88}, p. 375, proves that
\[ \sup_{x\in\GR} \big| \P\big( M_n \leq x\big) - G^n(x)\big| \to 0,\ \text{ as $n\to\infty$}.
\]
If $\P\big(Y_1 \leq x\big)$ is regular, so is $G(x) =  \P^{1/\mu}\big( Y_1 \leq x\big)$, hence $\{X_j\}$ admits a regular  phantom distribution function and by Theorem \ref{th1} also a continuous phantom distribution function. Conversely, if $\{X_j\}$ admits a continuous phantom distribution
$G'(x)$, then by Theorem \ref{thm:equiv} $G(x)$ is regular and so $ \P\big( Y_1 \leq x\big) = G^{\mu}(x)$ 
is also regular. 
\end{proof}

It follows from the above theorem that even if we are given a regenerative structure and are able to check both (\ref{eq:zerocycle}) and $\E W_1 <+\infty$,
we still need additional information in order to obtain the regularity of the distribution of the maximum over the cycle. In formulas (\ref{eq:set}) and (\ref{eq:set1}) below we show how to do that for Lindley's processes with subexponential steps.
   
\begin{example}\label{Ex:Asmussen} (Lindley process)

In general we follow \cite{Asmu98}, but we have changed the notation to comply with the rest of the paper. Let
\[ X_{j+1} = \big( X_j + Z_j\big)^+,\ \  j=1,2,\ldots,\]
where $Z_1, Z_2, \ldots $ are i.i.d. with a distribution function $H$ and mean $-m < 0$ and 
$X_0$ is independent of $\{Z_j\}$ and distributed according to the unique stationary distribution $F$. 
Suppose that $H$ is subexponential, i.e. strictly tail-equivalent to a 
distribution function $B(x)$ concentrated on $(0,\infty)$ and such that   
\[ \frac{1 - B^{*2}(x)}{1-B(x)} \to 2, \text{  as $x\to \infty$}.\]
Then $\{X_j\}$  is a stationary process with regenerative structure $\{(W_j,Y_j)\}$ satisfying (\ref{eq:reg1}) and $\mu = \E W_1 <\infty$. In particular, Theorem \ref{thm:reg} applies to $\{X_j\}$.

Moreover, Theorem 2.1 of \cite{Asmu98} shows that
\[ \frac{\P\big( Y_1 > x\big)}{\mu\big(1-H(x)\big)} \to 1,\ \text{ as $x\to\infty$}.\]
It follows, that $G(x) = \P^{1/\mu}\big( Y_1 \leq x\big)$ is strictly tail-equivalent with $H(x)$.
The regularity of $H$ is implied by the subexponentiality. Indeed, it is well-known (see e.g. 
Lemma 1.3.5 in \cite{EKM97}) that for each $y > 0$
\begin{equation}\label{eq:set}
 \frac{1 - H(x-y)}{1-H(x)} \to 1, \text{ as $x\to\infty$}.
\end{equation}
Hence we have  for any $y>0$
\begin{equation}\label{eq:set1}
 1 \leq \frac{1-H(x-)}{1-H(x)} \leq \frac{1 - H(x-y)}{1-H(x)} \to 1, \text{ as $x\to\infty$}.
\end{equation}

So far we have established that $H$ is a regular phantom distribution function for 
$\{X_j\}$. Notice that this means 
\[ \sup_{x\in\GR}\big| \P\big( \max_{1\leq j\leq n} X_j \leq x\big) -  \P\big( \max_{1\leq j\leq n} Z_j \leq x\big)\big| \to 0, \text{ as $n\to\infty$}.\]
In order to prove that $\{X_j\}$ has the extremal index 0, we can apply our Theorem \ref{ThEI} (ii). Following \cite{Asmu98} let us invoke the 
known asymptotics of the tail of the stationary distribution $F$. By \cite{EmVe82} we have 
\[ 1 - F(x) \sim \frac{1}{m} \int_x^{\infty} \big(1 - H(u)\big) du, \text{ as $x\to\infty$},\]
what is heavier than $1-H(x)$ for subexponential $H$:
\[ \frac{1 - H(x)}{1-F(x)} \to 0,    \text{ as $x\to\infty$}.\]


\end{example}

\begin{remark}\label{rem:Asmussen} 
Lindley process is a simple model which allows (almost) explicit calculations of basic characteristics. In general such situation is very seldom. Therefore the indirect methods 
of construction of a phantom distribution function, presented in the next section and based on mixing Condition $B_{\infty}(v_n)$ and properties of the marginals, seem to be more applicable.  
\end{remark}

\section{Phantom distribution functions for weakly dependent sequences}\label{sect_list}
 \subsection{Coefficients of weak dependence}\label{coefficients}

 The unified contemporary approach to the weak dependence, developed in \cite{DDL07}, consists in establishing specific bounds on covariances between classes of functions. The general framework is as follows. 

For $s\in\GN$, let $\cM^s$ be the family of all bounded measurable {\em non-constant} functions $f$ on $\GR^s$ satisfying
\[ \sup_{(x_1,x_2,\ldots, x_s) \in \GR^s} |f(x_1,x_2,\ldots,x_s)| \leq 1.\]
Define also the Lipschitz coefficient $\cM^s \ni f \mapsto \Lip f \in \GR^+ \cup \{+\infty\}$ as
\[
\Lip f=\sup_{(y_1,\ldots,y_s)\ne(x_1,\ldots,x_s) }
\frac{\left| f(y_1,\ldots,y_s)- f(x_1,\ldots,x_s) \right|}
{\|y_1-x_1\|+\cdots+\|y_s-x_s\|}. \]
Finally, set $\cM = \cup_{s\in \GN} \cM^s$.

In general we will say that a time series $\{X_j\}$ is $\eps$-weakly dependent, if there
exists a mapping 
\[ \Psi_{\eps} : \cM \times \cM \to \GR^+\] such that
 \begin{equation}\label{weakdep}
\eps(r) = \sup \frac{\left|\cov\left(f(X_{i_1},\ldots, X_{i_s}),g(X_{j_1},\ldots, X_{j_t})\right)
\right|}{\Psi_{\eps}(f,g)} \longrightarrow 0, \ \text{ as $r\to\infty$},
 \end{equation}
where the supremum is taken over all pairs of functions $f \in \cM^s$, $g\in\cM^t$ and sets of 
indices 
\[i_1 \le i_2  \le \cdots  \le i_s  \le j_1  \le j_2  \le \cdots  \le j_t\]
with a gap of size $r$:
\[ j_1 - i_s \geq r.\]
By selecting a mapping $\Psi_{\eps}$ we obtain various coefficients of dependence.
\begin{align*}
\Psi_{\alpha}(f,g)&=4,  &\epsilon(r)&=\alpha(r) &\text{\ ($\alpha$-mixing).} \\
\Psi_{\theta}(f,g)&=t\,\Lip g, &\epsilon(r)&=\theta(r) &\text{\ ($\theta$-dependence).} \\
\Psi_{\eta}(f,g)&=s\, \Lip f+t\, \Lip g, & \epsilon(r)&=\eta(r) &\text{\ ($\eta$-dependence).} \\
\Psi_{\kappa}(f,g)&=s t \,\Lip f \cdot \Lip g, & \epsilon(r)&=\kappa(r) &\text{\ ($\kappa$-dependence).} \\
\Psi_{\lambda}(f,g)&=s\,\Lip f+t\,\Lip g+ st \,\Lip f\cdot\Lip g, & \epsilon(r)&=\lambda(r) &\text{\ ($\lambda$-dependence).}
\end{align*}

Notice that $\alpha$-mixing (often called also strong mixing) and $\theta$-dependence are {\em causal}, in the sense that they provide a bound for covariances with arbitrary measurable function $f$ of the past, while $\eta, \kappa$ or $\lambda-$dependencies are {\em non-causal}.

\subsection{Stationary $\alpha$-mixing sequences with a continuous marginal distribution}
\label{sect_cont_alpha}

\begin{theorem}\label{thm:alpha}
If $\{X_j\}$ is a stationary $\alpha$-mixing sequence with continuous mar\-gi\-nals,
then it admits a continuous phantom distribution function. 
\end{theorem}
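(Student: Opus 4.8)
The plan is to verify condition (iii) of Theorem \ref{th1}: produce a single sequence of levels $\{v_n\}$ with $\P(M_n \le v_n) \to \gamma \in (0,1)$ for which Condition $B_\infty(v_n)$ holds. Since $F$ is continuous, for each $n$ I can choose $v_n$ so that $n(1 - F(v_n))$ equals any prescribed positive constant, or so that $\P(M_n \le v_n)$ hits a target value; the freedom coming from continuity of $F$ is what makes the argument possible (contrast this with the discontinuous case treated later in Theorem \ref{th3b}). The natural candidate is to fix $\tau > 0$ and pick $v_n$ with $1 - F(v_n) = \tau/n$ (or the closest achievable value, using continuity), and then argue that along a subsequence $\P(M_n \le v_n)$ converges to some $\gamma$, with $\gamma \in (0,1)$ once $\tau$ is chosen in the right range — here one needs the trivial bounds $\P(M_n \le v_n) \le F(v_n)^{?}$ replaced by the subadditivity-type estimate $\P(M_n \le v_n) \ge 1 - n(1-F(v_n)) = 1 - \tau$ from a union bound, giving $\gamma \ge 1 - \tau > 0$, together with $\P(M_n \le v_n) \le \P(X_1 \le v_n) = 1 - \tau/n \to 1$, which is not immediately enough for $\gamma < 1$, so some care is needed.

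The core of the work is Condition $B_\infty(v_n)$, i.e.
\[
\sup_{p,q} \big| \P(M_{p+q} \le v_n) - \P(M_p \le v_n)\,\P(M_q \le v_n) \big| \to 0.
\]
The key step is to insert a separating block of length $r$ between the first $p$ coordinates and the last $q$ coordinates: write $M_{p+q} \le v_n$ roughly as the intersection of $\{M_p \le v_n\}$ and $\{M_{p+r+1,\,p+r+q} \le v_n\}$ up to the event $\{M_{p+1,p+r} \le v_n\}$ that the $r$ discarded coordinates exceed $v_n$, which costs at most $r(1 - F(v_n)) = r\tau/n$ by stationarity and a union bound. Since the indicator of $\{M_p \le v_n\}$ depends only on $X_1,\dots,X_p$ and the indicator of the shifted block depends only on coordinates at distance $\ge r$, the $\alpha$-mixing bound gives $|\cov| \le 4\alpha(r)$ for the decoupled probabilities. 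Then choose $r = r_n \to \infty$ with $r_n/n \to 0$ (so $r_n\tau/n \to 0$) and $\alpha(r_n) \to 0$ — any sequence tending to infinity slowly enough works, e.g. $r_n = \lfloor \sqrt n\rfloor$ — so both error terms vanish uniformly in $p,q$. This is the standard Leadbetter-type blocking argument, and it is essentially where all the content sits; the main obstacle is purely bookkeeping: making the block-splitting estimate uniform in $p$ and $q$ (including the degenerate cases $p$ or $q$ smaller than $r_n$, where one simply bounds the relevant probability crudely) and confirming the discarded-coordinates error is $o(1)$ uniformly.

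Finally, once (iii) is in hand, Theorem \ref{th1} immediately yields a continuous phantom distribution function (indeed the explicit formula (\ref{ephdf}) or (\ref{eq:cphdf}) applies once $\{v_n\}$ is replaced by a non-decreasing version). One subtlety worth flagging: the $v_n$ constructed from $1 - F(v_n) = \tau/n$ need not be monotone, but this is harmless since Theorem \ref{th1} only requires existence of \emph{some} sequence, and in any case passing to a non-decreasing rearrangement is routine (cf. Lemma 1 of \cite{Jak93}, invoked in the proof of Theorem \ref{th1}). I expect the only genuinely delicate point to be the lower bound $\gamma < 1$: one must rule out $\P(M_n \le v_n) \to 1$, which follows by choosing $\tau$ large enough that $1 - \tau$ is still positive is \emph{not} what bounds $\gamma$ away from $1$ — rather, one argues by contradiction using $B_\infty(v_n)$ itself (if $\P(M_n \le v_n) \to 1$ for every bounded-away-from-zero choice of $\tau$, iterate the near-multiplicativity to contradict $\P(M_n\le v_n) \le 1 - \tau/n$ summed over a linear number of blocks), or equivalently one first establishes $B_\infty$ for the family of levels indexed by $\tau$ and then selects $\tau$ for which the limit lands in $(0,1)$, exactly as in the O'Brien/Jakubowski theory.
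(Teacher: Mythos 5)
There is a genuine gap, and it lies exactly where you flagged uncertainty: your choice of levels. You define $v_n$ from the marginal tail, via $n\bigl(1-F(v_n)\bigr)=\tau$, and hope to tune $\tau$ (or argue by contradiction through $B_\infty(v_n)$) so that $\P(M_n\le v_n)\to\gamma\in(0,1)$. This cannot work in general, because the theorem you are proving covers stationary $\alpha$-mixing sequences with continuous marginals whose extremal index is \emph{zero} --- e.g.\ the Lindley process with subexponential increments (Example \ref{Ex:Asmussen}) and the random walk Metropolis chain of Example \ref{ex:MHA}, which is shown in Proposition \ref{Metropolis:mixing} to be $\alpha$-mixing. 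For such processes, $n\bigl(1-F(u_n(\tau))\bigr)\to\tau$ forces $\P\bigl(M_n\le u_n(\tau)\bigr)\to 1$ for \emph{every} $\tau\in(0,\infty)$, so no choice of $\tau$ lands the limit in $(0,1)$. Nor does your contradiction argument rescue this: Condition $B_\infty(v_n)$ is perfectly consistent with $\P(M_n\le v_n)\to 1$ (iterating near-multiplicativity of quantities tending to $1$ produces nothing, and the union bound only gives the lower bound $1-\tau$, never a useful upper bound for dependent sequences), so there is no contradiction to be had. In short, levels anchored to $F$ cannot in general satisfy (\ref{e3}).

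The paper proceeds in the opposite direction: it anchors the levels to the law of $M_n$ itself, setting $v_n=\inf\{x:\P(M_n\le x)\ge\gamma\}$ for a fixed $\gamma\in(0,1)$; continuity of $F$ makes the law of $M_n$ continuous, so $\P(M_n\le v_n)=\gamma$ exactly and (\ref{e3}) is immediate. The real work then becomes what is trivial under your choice, namely showing that $\P(X_1>v_n)\to 0$ (with no a priori link between $v_n$ and $F$ this needs proof): this is done by the sparse-block estimate of Lemma \ref{ineq} together with Propositions \ref{Propbasic} and \ref{propalpha}, which use $\alpha$-mixing along the thinned maxima $Z_k(m)$ to get $\sup_n k_n\P(X_1>v_n)<\infty$. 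Once $\P(X_1>v_n)\to 0$, your verification of $B_\infty(v_n)$ --- separating gap $r_n$, cost $r_n\P(X_1>v_n)$ for the discarded block, covariance bounded by the mixing coefficient --- is essentially the paper's Proposition \ref{lembeeren} and Corollary \ref{alphamix} and is fine. So the second half of your argument is sound, but the first half rests on a level choice that fails precisely in the extremal-index-zero cases that motivate the theorem.
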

\begin{proof} is given in Section \ref{proof:alpha}.
\end{proof}

\begin{example}\label{ex:MHA} (Random walk Metropolis algorithm with heavy-tailed marginals)

Let $\{Z_j\}$ is an i.i.d. sequence with the marginal distribution function $H$ given by the {\em proposal} density $h$, which is symmetric about $0$, and let $\{U_j\}$ be an i.i.d. sequence distributed uniformly
on $[0,1]$, independent of $\{Z_j\}$. Choose and fix the {\em target} probability density $f(x)$.
  
Let us consider a Markov chain given by the recursive equation
\begin{equation}\label{eq:MH}
X_{j+1} = X_j + Z_{j+1} \1\big\{ U_{j+1} \leq \psi\big(X_j, X_j + Z_{j+1}\big)\big\},
\end{equation}
where  $\psi(x,y)$ is defined as
\begin{equation}\label{eq:MH2}
\psi(x,y) = \begin{cases}
\min\big\{f(y)/f(x), 1\big\} &\text{ if } f(x) > 0,\\
1 & \text{ if } f(x) = 0.
\end{cases}
\end{equation}
Standard arguments based on the detailed balance equation $f(x)\psi(x,y) = f(y)\psi(y,x)$ and the assumed symmetry of $h$ show that $f$ is the
density of the stationary distribution function $F$ for $\{X_j\}$.  We refer to \cite{RRSS06} for discussion, references and a background relating such Markov chains to the well-known  random walk Metropolis algorithm and Markov Chain Monte Carlo methods. 

Here we focus on the problem of existence of a phantom distribution function for $\{X_j\}$. 
Since the marginal distribution function $F$ is continuous (when we run the process under the initial stationary distribution $F$) we can apply Theorem \ref{thm:alpha} provided we can verify $\alpha$-mixing of $\{X_j\}$. In the literature on Markov chains it is customary to assume that the chain is    ``$\psi$-irreducible and aperiodic" (see e.g. \cite{JaRo07}). But this is almost like assuming $\alpha$-mixing itself. Since the transition function for the random walk Metropolis algorithm is relatively simple we decided to provide {\em a particular and suitable for simulations} set of sufficient conditions imposed on $h$ and $f$, in order to convince the reader that any target density exhibiting minimum regularity leads to $\alpha$-mixing.

\begin{proposition}\label{Metropolis:mixing}
Suppose the proposal density $h$ and the target density $f$ satisfy the following conditions.
\begin{description}
\item{\em\bf (i)} The set $\GS = \{ x\in\GR\,;\, f(x) > 0\}$ is connected.
\item{\em\bf (ii)} 
 In some interval $[a,b]$, $a < b, a, b \in \GS$, $f$ is {\em monotone} and without intervals of constancy of length greater than $(b-a)/4$. 
\item{\em\bf (iii)} $h$ is symmetric around $0$ and for some $k_h > 0$
\begin{equation}\label{eq:h}  h(x) \geq k_h, \text{ if $|x| \leq (b-a)/3$}.
\end{equation}
\end{description}
Then $\{X_j\}$ is Harris recurrent and aperiodic, and, in particular, $\alpha$-mixing.
\end{proposition}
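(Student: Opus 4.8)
The strategy is to verify the standard sufficient conditions from the theory of Harris chains: $\psi$-irreducibility, aperiodicity, and Harris recurrence, from which $\alpha$-mixing follows by a classical result (see e.g. \cite{DDL07}, Section 1.5, or the references in \cite{JaRo07}). The key technical point is to establish a \emph{minorization} for the $m$-step transition kernel on a suitable reference set, using only the crude lower bound \eqref{eq:h} on the proposal density and the monotonicity hypothesis (ii).

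First I would analyze a single step of the recursion \eqref{eq:MH}. Starting from any $x$, the proposal $x + Z$ has density $h(\cdot - x)$, which by (iii) is bounded below by $k_h$ on an interval of length $(b-a)\cdot(2/3)$ centered at $x$. The acceptance probability $\psi(x,y) = \min\{f(y)/f(x),1\}$ is bounded below on any region where $f$ does not decrease too sharply; this is exactly where hypothesis (ii) enters — on $[a,b]$ the density $f$ is monotone with no long flat stretches, so on a sub-interval of $[a,b]$ of length comparable to $(b-a)/4$ one has a uniform lower bound $f(y)/f(x) \geq c_0 > 0$ for $x,y$ in that sub-interval. Combining these two bounds, I would show that there is an interval $J \subset [a,b]$ of positive length, a constant $\beta > 0$, and a Lebesgue-continuous measure $\nu$ supported on $J$ such that the one-step kernel satisfies $P(x,\cdot) \geq \beta\,\nu(\cdot)$ for all $x \in J$. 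This makes $J$ a small set and, since $\nu(J) > 0$, forces aperiodicity (the minorization measure charges the set itself, so the period is $1$).

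Next I would propagate this local minorization to global $\psi$-irreducibility and Harris recurrence. By hypothesis (i) the support $\GS$ is connected, hence an interval; the crude bound \eqref{eq:h} shows that from any $x \in \GS$ the chain can reach any neighboring region of diameter $\leq (b-a)/3$ within $\GS$ with positive probability, and accepted moves occur with positive probability whenever $f$ does not vanish along the way (and $f > 0$ on all of $\GS$ by definition). Chaining such steps along the interval $\GS$, one reaches the reference interval $J$ from every starting point in finitely many steps with positive probability; together with the minorization on $J$ this gives $\psi$-irreducibility with $\psi = \nu$. Harris recurrence then follows because $F$ is an invariant probability measure: a $\psi$-irreducible chain possessing an invariant \emph{probability} measure is positive recurrent, and positive Harris recurrence is obtained once one checks that the chain is Harris (e.g. via Orey's theorem, or by noting the small set $J$ is reached infinitely often a.s. from $F$-a.e. starting point). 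Finally, a positive Harris recurrent aperiodic chain is $\beta$-mixing, hence $\alpha$-mixing; this last implication is entirely standard and I would simply cite it.

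The main obstacle I anticipate is the bookkeeping in the minorization step: one must choose the sub-interval $J \subseteq [a,b]$ carefully so that simultaneously (a) the proposal kernel from every $x \in J$ puts mass $\geq k_h$ on all of $J$ (which needs $\mathrm{diam}(J) \le (b-a)/3$), (b) the monotonicity and ``no flat stretch longer than $(b-a)/4$'' condition yields a uniform lower bound on $f(y)/f(x)$ over $x,y \in J$, and (c) $J$ has positive Lebesgue measure so that $\nu$ is a genuine nontrivial measure. A length on the order of $(b-a)/4$ (or a bit less, to respect the $(b-a)/3$ constraint) works: the ``no interval of constancy longer than $(b-a)/4$'' hypothesis guarantees $f$ is not identically flat on $J$, and monotonicity then bounds the oscillation of $f$ on $J$ by its total increment, which is finite and positive; dividing the infimum by the supremum of $f$ over $J$ gives the constant $c_0$. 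Everything else is a routine invocation of Meyn–Tynan / Nummelin-style splitting and the standard mixing implications, so I would keep those parts brief.
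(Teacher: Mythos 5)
Your proposal is correct and its core coincides with the paper's: the paper also builds a one-step minorization on a subinterval of $[a,b]$ of length $(b-a)/3$ (namely $C=[a+\eta,b-\eta]$, $\eta=(b-a)/3$), using $h\geq k_h$ from (\ref{eq:h}) together with the monotonicity bound $\psi(x,y)\geq f(b)/f(a)$ for $x,y\in[a,b]$, obtaining $P(x,B)\geq k_h\bigl(f(b)/f(a)\bigr)\,\ell(B)$ for $x\in C$, $B\subset C$, and then invokes Meyn--Tweedie (Theorem 13.3.4(ii)) and Bradley (Theorems 21.5, 21.6) for Harris recurrence, aperiodicity and $\alpha$-mixing. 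Where you genuinely diverge is aperiodicity: the paper proves it directly by showing the rejection probability is positive, $\P(X_{j+1}=X_j)>0$, and this is exactly where the ``no interval of constancy longer than $(b-a)/4$'' part of (ii) is used (it bounds below by $(b-a)/12$ the measure of $z\in[0,\eta]$ with $f(x+z)<f(x)$, giving the explicit bound $\tfrac1{18}f(b)k_h(b-a)^2$); you instead get (strong) aperiodicity from the fact that your minorization measure charges the small set $J$ itself, which is a valid and arguably cleaner route that does not use the no-flat-stretch clause at all. Relatedly, your attribution of that clause to the ratio bound $c_0$ is misplaced: since $[a,b]\subset\GS$ and $f$ is monotone there, $\inf_J f/\sup_J f\geq f(b)\wedge f(a)\,/\,f(a)\vee f(b)>0$ with no flatness assumption needed, so the clause plays no role in your argument (this is not a gap, only a stronger-than-needed hypothesis on your route). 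Finally, you spell out the $\psi$-irreducibility chaining along the interval $\GS$ (noting $f>0$ there keeps acceptance probabilities positive), which the paper leaves implicit in its ``routine (although not straight-forward)'' citation; your sketch is the right way to fill that in, with the small caveat that from points of $\GS$ the chain never exits $\GS$ (proposals with $f=0$ are rejected), so Harris recurrence is naturally asserted on $\GS$, exactly as the paper tacitly does.
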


\begin{proof} First we will prove that (ii) implies strong aperiodicity.  Indeed,
\begin{align*}
\P\big( X_j = X_{j+1}\big) &= \P \big( U_{j+1} > \psi\big(X_j, X_j + Z_{j+1}\big)\big)\\
 & = \E\big( 1 - \psi\big(X_j, X_j + Z_{j+1}\big)\big)\\
&= \E\Big( 1 - \frac{f(X_j + Z_j)}{f(X_j)}\Big)\1\{ f(X_j + Z_j) < f(X_j)\} \1 \{ f(X_j) > 0\}.
\end{align*}
This expression will be positive if we are able to show that $ f(X_j + Z_j) < f(X_j)$ with positive probability.  Assume that $f$ is {\em non-increasing} on $[a,b]$ (the other case is completely analogous). Then  $f(a) \geq f(x) \geq f(b) > 0,\ x\in [a,b]$, hence
\begin{equation}\label{eq:mon}
\psi(x,y) \geq f(b)/f(a) > 0, \ x, y \in [a,b].
\end{equation}
Set for notational convenience $\eta = (b-a)/3$ and  observe that we have 
\begin{align*}
\P\big( f(X_j + Z_j) < f(X_j)\big) & = \int dx\, f(x)  \int dz\, h(z)\ \1\{ f(x + z) < f(x)\}\\
&\geq \int_a^{b - \eta} dx\, f(x)  \int_0^{\eta} dz\, h(z)\ \1\{ f(x + z) < f(x)\} \\
&\geq f(b) \frac{2}{3} (b-a) k_h \frac{1}{12}(b-a) = \frac{1}{18} f(b) k_h (b-a)^2 > 0.
\end{align*}

Next we shall choose a ``small'' set.  Set $C = [a+\eta, b-\eta]$. Notice that $x\in C$ and $z\in [-\eta, \eta]$ imply $x + z \in [a,b]$ and by (\ref{eq:mon}) 
\[ \psi(x,x+z) \geq f(b)/f(a).\]
Denote by $P(x, B)$ a regular version of the conditional distribution $\P\big( X_{n+1} \in B\big| X_n = x\big)$. We have for $B \subset C$ and $x \in C$ 
\begin{align*}
P(x, B) &= \delta_B(x) \int dz\, h(z) \big( 1 - \psi(x,x+z)\big) + \int dz\, h(z) \ \1_{B - x}(z) \psi(x,x+z) \\
&\geq \int dz\, h(z) \ \1_{B - x}(z) \psi(x,x+z) \geq k_h \int_{-\eta}^{\eta} dz\, \1_{B - x}(z) \psi(x,x+z)\\
&\geq k_h \big( f(b)/f(a)\big) \int_{-\eta}^{\eta} dz\, \1_{B - x}(z) = k_C \ell(B-x) = k_c \ell(B),
\end{align*}
where $\ell$ is the Lebesgue measure and the next-to-last equality holds because 
$B -x \subset C - C = [-\eta,\eta]$.

It is then a routine (although not straight-forward) application of Theorem 13.3.4 (ii) from 
\cite{MT09} (with $d=1$) or Theorems 21.5 and 21.6 from \cite{Brad07} that gives us $\alpha$-mixing of $\{X_j\}$.

\end{proof}

We have proved that any random walk Metropolis algorithm built upon functions $h$ and $f$ satisfying conditions (i)-(iii) of Proposition \ref{Metropolis:mixing} admits a continuous phantom distribution function.

It is interesting that in a wide class of target densities the extremal index of the corresponding 
Metropolis Markov chain is {\em zero}. Theorem 3.1 of \cite{RRSS06} asserts that this is the case when the target distribution function has the property that for some $m > 0$
\begin{equation}\label{lim1}
 \lim_{u\to\infty} \frac{1 - F(u + m)}{1 - F(u)}  = 1.
\end{equation}
\end{example}
 \begin{remark}\label{Rem:relative}
Random walk Metropolis algorithms are easy for simulation. Therefore their partial maxima can be used as a natural reference for processes which admit phantom distribution functions, but their extremal index is zero.  To be more precise, with each random walk Metropolis algorithm $\{X_j\}$ we can associate all stationary processes $\{X_j'\}$ such that for some $\theta \in (0,\infty)$ 
\[ \sup_{x\in\GR} \big|\P\big( \max_{1\leq j \leq n} X_j' \leq x\big) - \P^{\,\theta}\big( \max_{1\leq j \leq n} X_j \leq x\big)\big| \to 0,\ \text{ as $n\to\infty$}.\]
Following \cite{Jak91} we can say that $\{X_j'\}$ has the relative extremal index $\theta$ 
with respect to $\{X_j\}$ and we can investigate asymptotic properties of $\{M_n'\}$ through 
those of $\{M_n\}$.
\end{remark}

\begin{remark}\label{Rem:interpretation} Our random walk Metropolis algorithms admit  regenerative structures, similarly to the Lindley process. The difference is that it is rather hopeless task to provide a detailed description for the tail probabilities of the cycle maximum
$\{Y_j\}$. We know, however, by Theorem \ref{thm:reg} and Theorem \ref{thm:equiv} that $\P^{1/\mu}\big( Y_1 \leq x)$ must be strictly tail equivalent to the phantom distribution function obtained through our Theorem \ref{thm:alpha}. This distribution function can in turn be recovered form 
{\em the driving sequence} $\{v_n\}$ (such that $\lim_{n\to\infty} \P\big( M_n \leq v_n\big) = \gamma$, see (\ref{ephdf})). Thus if we are able to estimate 
the shape of the sequence $\{v_n\}$
we are also given some information on the tails of the cycle maximum.
Moreover, this relation brings some insight into the interpretation of the phantom distribution function when the extremal index is zero.
\end{remark}

\subsection{Concentration assumption and weak dependence}
The cases of other dependencies are not as simple as $\alpha$-mixing and require an additional assumption on the marginal distribution function $F$.\\[3mm]
{\bf Concentration assumption.} There exist constants $b> 0$ and $B> 0$ such that
\begin{equation}\label{Concentration}
 \P \big(X_1 \in (x,x+u]\big) = F(x+u) - F(x) \leq B u^b, \ x\in\GR, u > 0.
\end{equation}
\begin{remark}\label{remconc}
The concentration assumption is not very restrictive in the class of absolutely continuous distributions. For example, if  $F$ has a bounded density $p$, then (\ref{Concentration}) holds with $b=1$ and $B = \sup_x p(x)$. Another example is provided by the Beta density
\[ p(x) = \frac{x^{c-1}(1-x)^{d-1}}{B(c,d)}, \ x\in (0,1),\]
with $0 < c,d <1$. In this case $b = c \wedge d$. Notice that only $b \leq 1$ is possible.
\end{remark}

The following theorems are proved in Sections \ref{prooftheta} - \ref{proofkappa}, respectively.
\begin{theorem}\label{PHDFtheta}
If $\{X_j\}$ is a stationary sequence with continuous marginals satisfying (\ref{Concentration}), which is $\theta$-weakly dependent and fulfills
\[ \theta(r) = \cO(r^{-\beta}),\quad\text{ for some } \beta > \frac{1+\sqrt{5}}{2}\Big(1 + \frac{1}{b}\Big),\]
then it admits a continuous phantom distribution function. 
\end{theorem}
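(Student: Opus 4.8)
The plan is to verify condition~(iv) of Theorem~\ref{th1}: it is enough to produce one non-decreasing sequence of levels $\{v_n\}$ and a number $\gamma\in(0,1)$ with $\P(M_n\le v_n)\to\gamma$ for which Condition $B_T(v_n)$ holds for every $T>0$, since a continuous $F$ is automatically regular in the sense of O'Brien and the continuous phantom distribution function is then produced by formula~\eqref{ephdf} (or~\eqref{eq:cphdf} if $\{v_n\}$ is only non-decreasing). Following the scheme announced at the end of the introduction, the proof splits into choosing a convenient $\{v_n\}$ together with the rate at which $\P(X_1>v_n)\to0$, and then checking $B_T(v_n)$ by a blocking argument in which the concentration assumption~\eqref{Concentration} serves to smooth indicators into Lipschitz functions.

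\emph{Step 1 (the levels $\{v_n\}$ and the tail rate; Proposition~\ref{Propbasic}).} Continuity of $F$ makes every $M_n$ atomless, so, fixing $\gamma\in(0,1)$, the $\gamma$-quantiles $v_n$ satisfy $\P(M_n\le v_n)=\gamma$ exactly and form a non-decreasing sequence because $n\mapsto M_n$ is stochastically increasing; a first application of $\theta$-weak dependence gives $v_n\to F_*$ (otherwise $\P(M_n\le v_n)$ would tend to $0$). The substantive part is the quantitative tail estimate $\P(X_1>v_n)=\cO(n^{-\sigma})$ for a suitable $\sigma=\sigma(\beta,b)\in(0,1)$. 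This comes from a blocking comparison of $\P(M_n\le v_n)$ with powers $\P(M_\ell\le v_n)^{n/\ell}$, in which each step introduces an error proportional to $\P(X_1>v_n)$ (from deleting separating runs), a concentration error controlled by $b$, and a weak-dependence error controlled by $\beta$; since $\gamma$ is strictly between $0$ and $1$ these errors cannot be absorbed unless $\P(X_1>v_n)$ is correspondingly small, and running the estimate as a self-improving recursion pins the admissible exponent $\sigma$ down to a value whose fixed-point relation produces the golden ratio --- concretely one may take $\sigma=\tfrac{\sqrt5-1}{2}=1\big/\tfrac{1+\sqrt5}{2}$. This iterative structure is exactly what forces the golden ratio into the hypothesis on $\beta$.

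\emph{Step 2 (Condition $B_T(v_n)$).} Fix $T>0$ and $p,q\in\GN$ with $p+q\le Tn$, and let $r_n\to\infty$ and $\delta_n\downarrow0$ be calibrated afterwards. Delete a run of $r_n$ indices separating the first $p$ coordinates from the last $q$; Bonferroni over that run, and over the $r_n$ extra coordinates of a maximum of length $q$ versus length $q-r_n$, reduces $B_T(v_n)$ --- up to an additive term $\cO\big(r_n\,\P(X_1>v_n)\big)$ --- to bounding
\[
\big|\cov\big(\1\{M_p\le v_n\},\ \1\{M_{[p+r_n+1,\,p+q]}\le v_n\}\big)\big|.
\]
The first factor is an arbitrary measurable function of the past $(X_1,\dots,X_p)$, which $\theta$-dependence permits; the second factor equals $\1\{\max_{p+r_n<j\le p+q}X_j\le v_n\}$ and is replaced by $\chi_{\delta_n}\!\big(\max_{p+r_n<j\le p+q}X_j\big)$, where $\chi_{\delta_n}$ is $1$ on $(-\infty,v_n]$, is $0$ on $[v_n+\delta_n,\infty)$, and is affine in between, so that as a function of its $q-r_n$ arguments it is $1/\delta_n$-Lipschitz. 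By~\eqref{Concentration} the replacement error is at most $(q-r_n)B\delta_n^{\,b}\le TnB\delta_n^{\,b}$, and $\theta$-weak dependence bounds the remaining covariance by $\theta(r_n)\,(q-r_n)\,\delta_n^{-1}=\cO\big(n\,r_n^{-\beta}\,\delta_n^{-1}\big)$. Hence $B_T(v_n)$ holds as soon as
\[
r_n\,\P(X_1>v_n)\to0,\qquad n\,\delta_n^{\,b}\to0,\qquad n\,r_n^{-\beta}\,\delta_n^{-1}\to0 .
\]

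\emph{Calibration and the main obstacle.} With $r_n=n^{a}$, $\delta_n=n^{-d}$ and the rate $\P(X_1>v_n)=\cO(n^{-\sigma})$ from Step~1, these three requirements become $a<\sigma$, $d>1/b$ and $d<\beta a-1$, so admissible $a,d$ exist precisely when $(1+1/b)/\beta<\sigma$. Matching this against the tail exponent $\sigma$ produced by the self-improving blocking recursion of Step~1 --- whose fixed point involves $\tfrac{1+\sqrt5}{2}$ --- yields exactly the hypothesis $\beta>\tfrac{1+\sqrt5}{2}\big(1+\tfrac1b\big)$. The genuinely delicate and new part of the proof is thus Step~1: wringing a strong enough polynomial tail bound for the quantile levels out of only $\theta$-dependence and the concentration assumption, where the self-improving blocking recursion is unavoidable and is responsible for the golden-ratio constant. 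Once Step~1 is in hand, Step~2 and the concluding appeal to Theorem~\ref{th1} are routine, and the continuous phantom distribution function is then read off from~\eqref{ephdf}.
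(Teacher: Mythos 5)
Your overall architecture is the right one and matches the paper's scheme: take $v_n$ to be exact $\gamma$-quantiles of $M_n$ (continuity of $F$), establish a polynomial rate for $\P(X_1>v_n)$, and verify Condition $B_T(v_n)$ via Proposition \ref{lembeeren} by smoothing the ``future'' indicator into a $1/\delta_n$-Lipschitz function, paying $B q_n\delta_n^b$ by \eqref{Concentration} and $q_n\delta_n^{-1}\theta(r_n)$ by causal $\theta$-dependence; your calibration in Step 2 ($a<\sigma$, $d>1/b$, $d<\beta a-1$, hence feasibility iff $(1+1/b)/\beta<\sigma$) is exactly the paper's constraint $\tau>(1+b)/(b\beta)$ together with $r_n\P(X_1>v_n)\to 0$. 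The problem is Step 1, which you yourself identify as the delicate part and then do not prove. You assert that a ``self-improving blocking recursion'' with gap deletions has a ``fixed point'' that produces the golden ratio and yields the universal exponent $\sigma=(\sqrt5-1)/2$ independently of $\beta$ and $b$. That cannot be right as stated: the achievable tail exponent must degrade as $\beta\downarrow 0$ (with very slow decay of $\theta(r)$ no fixed positive $\sigma$ is attainable), and in fact $1/\varphi$ is precisely the borderline value attained only at the critical $\beta=\varphi(1+1/b)$. Presenting $\sigma=1/\varphi$ as an intrinsic output of Step 1, and then ``matching'' it with Step 2, makes the argument circular — the quantitative content that has to be proved is exactly how large a $\sigma$ the dependence and concentration hypotheses allow, and that is the one thing left as a black box.

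The paper's mechanism is different and involves no bootstrap. Step 1 is a one-shot estimate: Lemma \ref{ineq} uses the \emph{subsampled} maxima $Z_k(m)=\max\{X_m,X_{2m},\dots,X_{km}\}$, so that $\gamma\le\P(M_n\le v_n)\le\P(X_1\le v_n)^k+kC_n(m;k)$ with no gap-deletion term proportional to $\P(X_1>v_n)$ at all (dropping indices only enlarges the event), and Lemma \ref{covtwo} bounds $C_n(m;k)\le(1+B)\,\theta^{b/(1+b)}(m)$ by smoothing only the single coordinate $X_{jm}$ (causality lets the past indicator stay rough). Proposition \ref{Propbasic} then converts $k_nC_n(m_n;k_n)\to0$ into $\sup_n k_n\P(X_1>v_n)<\infty$, which with $k_n\sim n^{\tau+\delta}$, $m_n\sim n^{1-\tau-\delta}$ gives every tail exponent below $\beta b/(1+b+\beta b)=\beta/(c+\beta)$, $c=1+1/b$ — an exponent depending on $\beta$ and $b$, not a universal constant. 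The golden ratio then appears only when this is matched against the Step-2 requirement $\tau>c/\beta$: compatibility means $c/\beta<\beta/(c+\beta)$, i.e.\ $\beta^2-c\beta-c^2>0$, i.e.\ $\beta>c\,\frac{1+\sqrt5}{2}$. So the constant comes from a quadratic compatibility condition between the two exponent constraints, not from a fixed point of a recursion inside Step 1. To repair your proof you would either have to carry out your bootstrap honestly (deriving a $\beta$- and $b$-dependent exponent and checking it beats $c/\beta$ under the stated hypothesis), or replace Step 1 by the subsampling argument above.
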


\begin{theorem}\label{PHDFeta}
If $\{X_j\}$ is a stationary sequence with continuous marginals satisfying (\ref{Concentration}), which is $\eta$-weakly dependent and fulfills
\[ \eta(r) = \cO(r^{-\beta}),\quad\text{ for some } \beta > 2 \Big(1 + \frac{1}{b}\Big),\]
then it admits a continuous phantom distribution function. 
\end{theorem}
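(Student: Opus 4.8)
The plan is to invoke Theorem \ref{th1}(iii): it suffices to produce a single sequence $\{v_n\}$ of levels with $\P(M_n \leq v_n) \to \gamma \in (0,1)$ for which Condition $B_{\infty}(v_n)$ holds. The architecture mirrors the $\alpha$-mixing case, the only difference being that the covariance bound for $\eta$-weak dependence applies to Lipschitz functions rather than to indicators, so indicator functions must be replaced by Lipschitz approximants, and the concentration assumption (\ref{Concentration}) is what controls the error incurred in this replacement. First I would appeal to the basic Proposition \ref{Propbasic} (stated later in the paper, but available to me here) to extract from a level sequence the rate at which $\P(X_1 > v_n) \to 0$; this converts the abstract hypothesis ``$\P(M_n \leq v_n)\to\gamma$'' into a quantitative tail bound of the form $1 - F(v_n) = \cO(1/n)$, which is what makes the error terms summable.

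The heart of the argument is the verification of $B_{\infty}(v_n)$, i.e. controlling
\[
\sup_{p,q} \big| \P(M_{p+q} \leq v_n) - \P(M_p \leq v_n)\,\P(M_q \leq v_n)\big|
\]
as $n \to \infty$. For fixed $n$ one splits the block of length $p+q$ by inserting a separating gap of length $r = r_n$ (chosen later, growing slowly with $n$); the first step is to show that deleting a block of length $r_n$ from the maximum changes $\P(M_\cdot \leq v_n)$ by at most $\cO\big(r_n(1 - F(v_n))\big) = \cO(r_n/n)$, using stationarity and a union bound. After the gap is inserted one is left with a covariance between $\1\{M_p \leq v_n\}$ and $\1\{M_{p',p'+q} \leq v_n\}$, indexed by coordinates separated by $r_n$. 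Here I would approximate each indicator $\1\{M_k \leq v_n\} = \prod_{j} \1\{X_j \leq v_n\}$ by a product of Lipschitz cutoff functions $\chi_\delta$ that equal $1$ below $v_n$, $0$ above $v_n + \delta$, and are $(1/\delta)$-Lipschitz; the telescoping estimate $\big|\prod a_j - \prod b_j\big| \leq \sum |a_j - b_j|$ together with (\ref{Concentration}) bounds the approximation error by $\cO\big(k\, B\delta^b\big)$, and the Lipschitz constant of the resulting function on $\R^k$ is $\cO(1/\delta)$ (this is precisely why $\eta$-dependence, whose $\Psi_\eta(f,g) = s\Lip f + t\Lip g$ is linear in the Lipschitz constants, behaves better than $\kappa$ here). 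Applying (\ref{weakdep}) then yields a covariance bound of order $\eta(r_n)\cdot(p+q)/\delta$, but since we only ever need $B_T(v_n)$ for each fixed $T$ (Theorem \ref{th1}(iv) and Remark \ref{Betebeinfty}), we may restrict to $p + q \leq Tn$, giving $\cO\big(\eta(r_n)\, Tn/\delta\big)$.

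Collecting the three error contributions — the gap-deletion error $\cO(r_n/n)$, the Lipschitz-approximation error $\cO\big((Tn)\, \delta^b\big)$, and the weak-dependence error $\cO\big(\eta(r_n)\, n/\delta\big) = \cO\big(r_n^{-\beta} n/\delta\big)$ — one must choose $r_n \to \infty$ and $\delta = \delta_n \to 0$ so that all three tend to zero; balancing them is an optimization in the two parameters $r_n, \delta_n$ (polynomial in $n$), and the threshold $\beta > 2(1 + 1/b)$ is exactly the condition under which a simultaneous choice exists — this, I expect, is where the golden ratio in Theorem \ref{PHDFtheta} versus the factor $2$ here comes from (the $\theta$-case has an asymmetric bound $\Psi_\theta = t\Lip g$ and hence a different, and it turns out weaker, balancing constraint). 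The main obstacle is precisely this balancing step: one has to be careful that the first error needs $r_n$ small while the third needs $r_n$ large, and the second error couples $\delta_n$ to $n$ through the concentration exponent $b$; writing $r_n = n^{\rho}$, $\delta_n = n^{-\sigma}$ and solving the resulting system of inequalities $\rho < 1$, $1 - \sigma b < 0$, $\beta\rho - 1 + \sigma > 1$ (equivalently $\sigma > 2 - \beta\rho$) for the existence of $(\rho,\sigma)$ gives the stated rate. The remaining step — that $\P(M_n \leq v_n)$ stays bounded away from $0$ and $1$, so that one genuinely has $\gamma \in (0,1)$ — is supplied by hypothesis once we note that a level sequence with the required property exists by regularity of $F$ (indeed $F$ is continuous, hence regular by (\ref{e2})) and by Proposition \ref{Propbasic}.
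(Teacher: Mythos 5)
Your overall architecture (choosing $v_n$ as exact $\gamma$-quantiles of the law of $M_n$, inserting a gap $r_n$ via Proposition \ref{lembeeren}, smoothing the indicators by $(1/\delta)$-Lipschitz cutoffs with the concentration assumption (\ref{Concentration}) controlling the replacement error, and using $\Psi_\eta(f,g)=s\Lip f+t\Lip g$ to get a covariance term of order $n\,\eta(r_n)/\delta$) is the same as the paper's. The genuine gap is in the tail-rate step. You assert that Proposition \ref{Propbasic} ``converts'' the hypothesis into $1-F(v_n)=\cO(1/n)$. It does not: Proposition \ref{Propbasic} only yields $\sup_n k_n\,\P(X_1>v_n)<+\infty$ for those $k_n$ for which one can actually verify $k_nC_n(m_n;k_n)\to0$ with $k_nm_n\le n$, and under $\eta$-dependence the quantities $C_n(m,k)$ are covariances of \emph{indicators}, so they must themselves be estimated by the same smoothing device (the paper's Lemma \ref{covthree}), giving $C_n(m,k)\le(1+B)\,k\,\eta^{b/(1+b)}(m)$. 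The extra factor $k$ forces $k_n^2m_n^{-\beta b/(1+b)}\to0$, so the best admissible rate is $k_n\sim n^{\kappa}$ with $\kappa<\beta b/(2+2b+\beta b)<1$, never $k_n\sim n$. This second, more costly application of the weak-dependence bound is exactly half of the constraint system and is where the factor $2$ in the threshold $\beta>2(1+\frac1b)$ comes from; with your unjustified $\cO(1/n)$ bound the balancing would only require $\beta>1+\frac1b$.

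Relatedly, the inequality system you write down does not reproduce the stated threshold even on its own terms: with $r_n=n^{\rho}$, $\delta_n=n^{-\sigma}$ the weak-dependence error is $\cO(n^{\,1+\sigma-\beta\rho})$, so the correct constraint is $\sigma<\beta\rho-1$, not $\sigma>2-\beta\rho$; as written the system is solvable for every $\beta>0$ (take $\sigma$ large), so it cannot ``give the stated rate,'' and after correcting the sign it yields only $\beta>1+\frac1b$ because of the missing step above. The repair is the paper's two-scale argument: take $r_n=[n^{\tau}]$ with $\tau>(1+b)/(b\beta)$ so that, after optimizing $u=\eta^{1/(1+b)}(r_n)$ in Lemma \ref{covthreebis}, the covariance in (\ref{condbete}) is $\cO\big(n\,\eta^{b/(1+b)}(r_n)\big)\to0$; and separately apply Lemma \ref{covthree} together with Proposition \ref{Propbasic} with $k_n=[n^{\tau+\delta}]$, $m_n=[n^{1-\tau-\delta}]$ to obtain $n^{\tau}\P(X_1>v_n)\to0$, which requires $\tau+\delta<\beta b/(2+2b+\beta b)$; both windows for $\tau$ are nonempty precisely when $\beta>2(1+\frac1b)$. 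A minor point: the statement $\P(M_n\le v_n)\to\gamma$ needs no appeal to regularity of $F$ — continuity of $F$ makes the law of $M_n$ continuous, so $v_n=\inf\{x:\P(M_n\le x)\ge\gamma\}$ gives $\P(M_n\le v_n)=\gamma$ exactly, as in (\ref{ehalf}).
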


\begin{theorem}\label{PHDFkappa}
If $\{X_j\}$ is a stationary sequence with continuous marginals satisfying (\ref{Concentration}), which is $\kappa$-weakly dependent and fulfills
\[ \kappa(r) = \cO(r^{-\beta}),\quad\text{ for some } \beta > \big(1+\sqrt{5}\big) \Big(1 + \frac{2}{b}\Big),\]
then it admits a continuous phantom distribution function. 
\end{theorem}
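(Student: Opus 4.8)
The plan is to verify condition (iv) of Theorem~\ref{th1}: it suffices to produce a single sequence of levels $\{v_n\}$ and a $\gamma\in(0,1)$ with $\P(M_n\le v_n)\to\gamma$ such that $B_T(v_n)$ from~(\ref{e4a}) holds for every $T>0$; the continuity of the resulting phantom distribution function then comes for free from the explicit formula in Theorem~\ref{th1}. Since $F$ is continuous, each $M_n$ has an atomless law, so a non-decreasing (and, after an innocuous perturbation, strictly increasing) sequence $\{v_n\}$ with $\P(M_n\le v_n)\to\gamma$ exists for any prescribed $\gamma\in(0,1)$. All the work is in checking $B_T(v_n)$, and here the obstacle specific to $\kappa$-dependence is that the indicator $\1\{X_j\le v_n\}$ has infinite Lipschitz constant, so — unlike in the $\alpha$-mixing case of Theorem~\ref{thm:alpha} — it must be smoothed, at a scale controlled through~(\ref{Concentration}).

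First I would feed this $\{v_n\}$ into Proposition~\ref{Propbasic}: from $\P(M_n\le v_n)\to\gamma\in(0,1)$ together with $\kappa(r)=\cO(r^{-\beta})$ and the concentration bound~(\ref{Concentration}) it delivers a genuine polynomial rate $\P(X_1>v_n)=\cO(n^{-\rho})$ with an explicit $\rho=\rho(\beta,b)\in(0,1)$. Morally $\rho$ is obtained by testing $\P(M_n\le v_n)$ on roughly $n/r$ levels separated by gaps of length $r$, replacing each $\1\{X\le v_n\}$ by a $\delta^{-1}$-Lipschitz surrogate, and choosing $r,\delta$ so that neither the accumulated $\kappa$-decoupling loss nor the accumulated concentration loss spoils the fact that $\P(M_n\le v_n)$ stays bounded away from $1$.

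To check $B_T(v_n)$, fix $T$ and $p,q\in\GN$ with $p+q\le Tn$ and split the quantity in~(\ref{e4a}) into three pieces. (a) Insert a gap of length $r=r_n$ between the first $p$ coordinates and the last $q$: deleting $r$ indicators costs at most $r\,\P(X_1>v_n)=\cO(r\,n^{-\rho})$, and the same bound compares $\P(M_q\le v_n)$ with $\P(M_{q-r}\le v_n)$. (b) Replace $\prod_j\1\{X_j\le v_n\}$ on each block by a product of $\delta$-smoothed, $\delta^{-1}$-Lipschitz sandwiching functions; on a block of length $\le Tn$ the assumption~(\ref{Concentration}) bounds the replacement error by $\cO(n\,\delta^{b})$. (c) Apply the $\kappa$-dependence inequality~(\ref{weakdep}) to the two resulting Lipschitz functionals of the two separated blocks: a product of $m\le Tn$ bounded-by-one, per-coordinate-$\delta^{-1}$-Lipschitz factors is itself $\delta^{-1}$-Lipschitz, but the weight $\Psi_\kappa(f,g)=st\,\Lip f\cdot\Lip g$ now carries the factor $(Tn)^2\delta^{-2}$, so this term is $\cO(n^2\,\delta^{-2}\,r^{-\beta})$. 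Altogether the left side of~(\ref{e4a}) is, uniformly over admissible $p,q$,
\[
\cO\big(r\,n^{-\rho}\big)\;+\;\cO\big(n\,\delta^{b}\big)\;+\;\cO\big(n^{2}\,\delta^{-2}\,r^{-\beta}\big).
\]

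The final step is to choose $r=r_n$ and $\delta=\delta_n$ making all three terms vanish, and this is the technical heart of the argument. Taking $\delta_n$ slightly below the scale dictated by the middle term forces $r_n$ above a power of $n$ dictated by the last term, while the first term caps $r_n$ by $n^{\rho}$; the two requirements are simultaneously satisfiable exactly when $\rho\beta$ exceeds a $b$-dependent constant. Because the rate $\rho$ furnished by Proposition~\ref{Propbasic} is itself a ratio with $\beta$ in both numerator and denominator, substituting its value turns that requirement into a quadratic inequality in $\beta$, whose positive root is the golden-ratio multiple $\tfrac{1+\sqrt5}{2}$ of a constant proportional to $1+2/b$, i.e. $\beta>(1+\sqrt5)(1+2/b)$; the occurrence of $1+2/b$ here (rather than $1+1/b$, as in Theorems~\ref{PHDFtheta}--\ref{PHDFeta}) is precisely the price paid for $\kappa$-dependence forcing the smoothing parameter $\delta$ to enter squared through $\Lip f\cdot\Lip g$. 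The hard part will be exactly this three-way optimisation — the concentration term wants $\delta$ small, the $\kappa$-term wants both $\delta$ and the gap $r$ large, the gap-insertion term wants $r$ small, and the window between the conflicting powers of $n$ is open only under the assumed decay of $\kappa(\cdot)$ — together with extracting and using the quantitative bound on $\P(X_1>v_n)$ from Proposition~\ref{Propbasic}, without which piece (a) is uncontrollable, and with all estimates kept uniform over $p+q\le Tn$.
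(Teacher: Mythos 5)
Your proposal is correct and follows essentially the same route as the paper: the same quantile levels $v_n$ (with $\P(M_n\le v_n)=\gamma$ exactly by continuity of $F$), Proposition \ref{Propbasic} with smoothed indicators and the concentration bound (Lemma \ref{covthree}) to extract a polynomial rate for $\P(X_1>v_n)$, and then Condition $B_T(v_n)$ via Proposition \ref{lembeeren} and Lemma \ref{covthreebis}, i.e.\ exactly your gap-insertion, $u$-smoothing and $\kappa$-covariance three-term bound. Your exponent optimisation, including the explanation of why the weight $\Lip f\cdot\Lip g$ forces $1+2/b$ and a quadratic in $\beta$ with threshold $\beta>(1+\sqrt5)\bigl(1+\tfrac{2}{b}\bigr)$, reproduces the paper's computation (the paper balances the terms by the substitution $u=\kappa^{1/(2+b)}(r_n)$), so the argument is sound.
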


\begin{remark} An inspection of the proof of the above theorem shows 
that it remains true if we replace $\kappa$- with $\lambda$-weak dependence.
\end{remark}

\subsection{Discontinuous marginals}
\label{sect_disc}

Let us rewrite first a part of  the regularity condition  
(\ref{e2}) in the form of\\
{\bf Condition $\Delta_0$}
\begin{equation}\label{edeltazero}
\lim_{x\to F_*-} \frac{\Delta F(x)}{1 - F(x)} = 0,
\end{equation}
where $\Delta F(x) = F(x) - F(x-)$.
We will also need a stronger version,  defined for $\xi > 0$.\\
{\bf Condition $\Delta_{\xi}$}
\begin{equation}\label{edeltaeta}
\sup_{x < F_*-} \frac{\Delta F(x)}{(1 - F(x))^{1 + \xi}} \leq M_{F,\xi} < +\infty. 
\end{equation}

\begin{theorem}\label{th3b}
Let $\{X_j\}$ be a stationary, {$\alpha$-mixing} sequence with marginal distribution function $F$, which is continuous at $F_*$.  Then it admits a continuous phantom distribution function provided:
\begin{description}
\item{\em (i)} $\{X_j\}$ is $m$-dependent (i.e. $\alpha(m+1) = 0$) and $F$ satisfies $\Delta_0$; 
\item{\em (ii)} For some constants $C > 0$ and $\rho \in [0,1)$ we have  $\alpha(n) \leq C \rho^n$ and 
$F$ satisfies $\Delta_\xi$ for some $\xi > 0$;
\item{\em (iii)} For some constants $C > 0$ and $\beta > 0$ we have  $\alpha(n) \leq C n^{-\beta}$ and 
$F$ satisfies $\Delta_\xi$ for some $\xi> 1/\beta$.
\end{description}
\end{theorem}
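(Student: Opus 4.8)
The plan is to reduce everything to Theorem \ref{th1}(iii), i.e. to exhibit a single non-decreasing sequence $\{v_n\}$ with $\P(M_n \leq v_n) \to \gamma \in (0,1)$ for which Condition $B_\infty(v_n)$ holds. Since $F$ is assumed continuous at $F_*$ (but may have jumps below $F_*$), the first task is to produce the levels. Condition $\Delta_0$ is exactly the first half of the regularity condition \eqref{e2} (with $F(F_*-)=1$ following from continuity at $F_*$), so $F$ is regular and O'Brien's result gives, for each $\gamma \in (0,1)$, a non-decreasing sequence $\{v_n\}$ with $F^n(v_n) \to \gamma$, equivalently $n(1-F(v_n)) \to -\log\gamma =: \tau$. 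I would fix one such sequence once and for all. The point of the three cases (i)--(iii) is then to upgrade this to $\P(M_n \leq v_n) \to \gamma$ by verifying $B_\infty(v_n)$, and the strength of the hypotheses on $\Delta F$ and on $\alpha$ is dictated by how crude the resulting blocking argument is forced to be.

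The core estimate is the standard Bernstein-type blocking bound specific to maxima: splitting $\{1,\dots,n\}$ into alternating ``big'' blocks of length $\ell_n$ and ``small'' gap blocks of length $r_n$ with $r_n = o(\ell_n)$, $\ell_n = o(n)$, one obtains
\[
\big| \P(M_{p+q} \leq v_n) - \P(M_p \leq v_n)\P(M_q \leq v_n)\big| \leq C\,\frac{n}{\ell_n}\,\alpha(r_n) + C\,\frac{n}{\ell_n}\,r_n\,(1-F(v_n)),
\]
uniformly in $p,q$, where the first term comes from decoupling across gaps using $\alpha$-mixing and the second from the probability that some of the $\approx n r_n/\ell_n$ indices sitting inside gaps produces an exceedance; here one uses stationarity and the union bound $\P(\max_{j \in \text{gap}} X_j > v_n) \leq r_n(1-F(v_n))$. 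Since $n(1-F(v_n)) \to \tau$, the second term is $O(r_n/\ell_n) \to 0$ automatically, and the whole argument hinges on choosing $r_n, \ell_n$ so that $(n/\ell_n)\alpha(r_n) \to 0$. For case (i), $m$-dependence means we may take $r_n = m+1$ constant, so $\alpha(r_n)=0$ and the first term vanishes identically; then any $\ell_n \to \infty$ with $\ell_n = o(n)$ works, and $\Delta_0$ suffices because it is only needed for the regularity of $F$. For case (ii), geometric mixing $\alpha(n) \leq C\rho^n$ lets us take $r_n = K\log n$ and $\ell_n$ a power of $n$, making $(n/\ell_n)\rho^{r_n}$ decay polynomially. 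For case (iii), $\alpha(n) \leq Cn^{-\beta}$ forces $r_n$ to be a genuine power of $n$, and balancing $(n/\ell_n) r_n^{-\beta} \to 0$ against $r_n = o(\ell_n) = o(n)$ is what produces the constraint relating $\beta$ to the exponent of $n$ one can afford.

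The main obstacle — and the reason cases (ii) and (iii) need $\Delta_\xi$ rather than just $\Delta_0$ — is that the simple-minded union bound $\P(\max_{\text{gap}} X_j > v_n) \leq r_n(1-F(v_n))$ is not quite good enough once $r_n$ grows like a power of $n$: then $r_n(1-F(v_n)) = r_n \cdot \Theta(1/n)$ need not tend to $0$, and one cannot simply drop those indices. The remedy is to first convert the gap contribution into a sum over the big blocks and use that the blocks have controlled length; more precisely, one needs to control terms like $\ell_n(1-F(v_n))$ and, crucially, the contribution of the jumps of $F$: writing $1-F(v_n-) = 1-F(v_n) + \Delta F(v_n)$ and iterating the blocking, the discrepancy between $\P(M_{p+q} \leq v_n)$ and the product picks up an error of order $(n/\ell_n)^2 (1-F(v_n))^2$ plus a term governed by $\Delta F(v_n)/(1-F(v_n))$, and $\Delta_\xi$ with $\Delta F(v_n) \leq M_{F,\xi}(1-F(v_n))^{1+\xi} = O(n^{-(1+\xi)})$ is precisely what makes the jump term summable against the $r_n \asymp n^{a}$ appearing from the mixing budget, giving the threshold $\xi > 1/\beta$ in (iii) and any $\xi>0$ in the geometric case (ii). Once $B_\infty(v_n)$ is established in each case, Theorem \ref{th1} delivers a continuous phantom distribution function and the proof is complete.
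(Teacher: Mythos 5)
Your plan breaks down at its very first step. You fix levels from the marginal, $F^n(v_n)\to\gamma$, i.e.\ $n(1-F(v_n))\to\tau$, and then claim to ``upgrade this to $\P(M_n\leq v_n)\to\gamma$ by verifying $B_\infty(v_n)$''. Theorem \ref{th1}(iii) requires two separate inputs: Condition $B_\infty(v_n)$ \emph{and} the convergence $\P(M_n\leq v_n)\to\gamma\in(0,1)$; the second is not a consequence of the first. For levels tied to the marginal in this way, the limit of $\P(M_n\leq v_n)$, when it exists, is $\gamma^{\theta}$ with $\theta$ the extremal index; if $\theta=0$ (a case the theorem must cover, and the very situation the paper emphasizes with the Lindley and Metropolis examples) one gets $\P(M_n\leq v_n)\to 1\notin(0,1)$, and without an extremal index the limit need not exist at all. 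So with your choice of $\{v_n\}$ the reduction to Theorem \ref{th1} cannot be completed. The paper avoids this by defining $v_n$ as a quantile of the law of $M_n$ itself, as in (\ref{egam}), which gives $\P(M_n\leq v_n)\geq\gamma$ by construction; the whole problem then becomes showing $\P(M_n= v_n)\to 0$, i.e.\ that no mass sticks at the quantile when $F$ has atoms.

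You have also assigned the hypotheses the wrong roles. Under $\alpha$-mixing, Condition $B_\infty(v_n)$ is essentially free: Corollary \ref{alphamix} yields it for \emph{any} levels with $\P(X_1>v_n)\to0$, with no rate assumption and no condition on the jumps of $F$. The jumps of $F$ do not enter the covariance/blocking estimate at all; your claimed extra error ``governed by $\Delta F(v_n)/(1-F(v_n))$'' in the blocking bound has no derivation and no visible source, and indeed your own second paragraph already shows the gap contribution is $O(r_n/\ell_n)\to0$, contradicting the difficulty you then invoke to motivate $\Delta_\xi$. In the paper, the mixing rates in (i)--(iii) are used through Propositions \ref{Propbasic} and \ref{propalpha} to obtain a polynomial bound of the form $\sup_n n^{1-\delta}\P(X_1>v_n)<\infty$, and $\Delta_0$ (case (i)) or $\Delta_\xi$ (cases (ii)--(iii)) then controls $\P(M_n=v_n)\leq n\,\P(X_1=v_n)$ via the jump-versus-tail ratio; the threshold $\xi>1/\beta$ is exactly what permits a choice of $\delta$ with $1/(1+\beta)<\delta<\xi/(1+\xi)$. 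Since your argument never addresses how $\P(M_n\leq v_n)\to\gamma\in(0,1)$ is to be obtained for a discontinuous $F$, it misses the actual content of the theorem.
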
 

\begin{remark}\label{JaRo}
See \cite{JaRo07} for conditions giving the polynomial rate of $\alpha$-mixing 
in random walk Metropolis algorithms.
\end{remark}

\section{Proofs of Theorems \ref{thm:alpha} - \ref{th3b}}
\label{sect_proofs}

\subsection{Basic computations involving covariances}\label{Basic}

Let $\{X_j\}$ be a stationary sequence of real valued random variables
with a marginal distribution function $F$. 
Take  $\gamma \in (0,1)$ and  define
\begin{equation}\label{egam}
  v_n = \inf \{ x\,;\, \P( M_n \leq x) \geq \gamma\}.
\end{equation}
Clearly, $\{v_n\}$ is a {\em non-decreasing} sequence and we have 
\begin{equation}\label{egamma}
\P( M_n \leq v_n) \geq \gamma.
\end{equation}
 \begin{lemma} \label{ineq}
Set 
\begin{equation}\label{zetjotem}
Z_k(m)=\max\{X_m, X_{2m}, \dots, X_{km}\},\ k, m\in\GN.
\end{equation}
 If $k\cdot m \leq n$ then 
\begin{equation}\label{ein}
 \gamma \leq \P\big(M_n\le v_n\big)\le \P\big(X_1\le v_n\big)^k  + k C_n(m; k),
\end{equation}
where
\[ C_n(m; k)=\max_{2\le j\le k}\left| \P\big( Z_j(m) \leq v_n\big) - \P\big(X_1\le v_n\big)\P\big( Z_{j-1}(m) \leq v_n\big)\right|.\]
\end{lemma}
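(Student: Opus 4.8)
The plan is a direct blocking and telescoping estimate. Write $p = \P(X_1 \le v_n)$, set $a_j = \P\big(Z_j(m) \le v_n\big)$ for $1 \le j \le k$, and abbreviate $C = C_n(m;k)$. The leftmost inequality $\gamma \le \P(M_n \le v_n)$ in (\ref{ein}) is simply (\ref{egamma}), which is built into the definition (\ref{egam}) of $v_n$, so only the upper bound requires work.

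First I would reduce the full partial maximum to the ``thinned'' one. Since $km \le n$, each of the indices $m, 2m, \dots, km$ belongs to $\{1, \dots, n\}$, so $Z_k(m) \le M_n$ pointwise; hence $\{M_n \le v_n\} \subseteq \{Z_k(m) \le v_n\}$ and $\P(M_n \le v_n) \le a_k$. It thus suffices to prove $a_k \le p^{\,k} + kC$.

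For the core of the argument I would telescope in $j$. Observe that $\{Z_j(m) \le v_n\} = \{X_m \le v_n\} \cap \{X_{2m} \le v_n, \dots, X_{jm} \le v_n\}$; since the index set $\{2m, \dots, jm\}$ is the translate by $m$ of $\{m, \dots, (j-1)m\}$, stationarity gives $\P\big(X_{2m} \le v_n, \dots, X_{jm} \le v_n\big) = a_{j-1}$, and of course $\P(X_m \le v_n) = p$. Therefore $a_j - p\,a_{j-1}$ is exactly the expression inside the absolute value in the definition of $C_n(m;k)$, so $|a_j - p\,a_{j-1}| \le C$, in particular $a_j \le p\,a_{j-1} + C$ for $2 \le j \le k$. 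Since $a_1 = \P(X_m \le v_n) = p$, an immediate induction using only $0 \le p \le 1$ and $C \ge 0$ yields $a_j \le p^{\,j} + jC$ for every $j$; taking $j = k$ and recalling $\P(M_n \le v_n) \le a_k$ gives the upper bound in (\ref{ein}).

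I do not expect any genuine obstacle. The two spots that deserve a moment's care are the use of stationarity to identify $\P\big(X_{2m} \le v_n, \dots, X_{jm} \le v_n\big)$ with $a_{j-1}$ (it is a shift of the index set, not of the values), and the degenerate case $k = 1$, in which the maximum defining $C_n(m;1)$ is over the empty set and should be read as $0$, the claim then reducing to the trivial $\P(M_n \le v_n) \le \P(X_1 \le v_n)$.
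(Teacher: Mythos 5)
Your proof is correct and follows essentially the same route as the paper: both peel off one point $X_{jm}$ at a time, bound each resulting error term by $C_n(m;k)$ via stationarity, and iterate (your recursion $a_j \le p\,a_{j-1} + C$ with induction is just a cleaner packaging of the paper's chained inequalities). The remarks on the $k=1$ case and the shift-invariance of the index set are fine but not needed, since $|a_j - p\,a_{j-1}| \le C_n(m;k)$ holds by the very definition of $C_n(m;k)$.
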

\begin{proof} If $k\cdot m \leq n$,  then
\begin{align*}\gamma \leq&\ \P\big( M_n \leq v_n\big) \leq \P\big( M_{k m} \leq v_n\big)
\leq \P\big( Z_k(m) \leq v_n\big) \\  
\leq&\  \P\big(X_{km}\le v_n\big) \P\big( Z_{k-1}(m) \leq v_n\big)\\ & + \left| \P\big( Z_k(m) \leq v_n\big) - \P\big(X_{km}\le v_n\big)\P\big( Z_{k-1}(m) \leq v_n\big)\right|\\ 
 \leq&\ \P\big(X_1\le v_n\big)^2\P\big( Z_{k-2}(m) \leq v_n\big) + C_n(m; k)\\
  &+ \P\big(X_1\le v_n\big)\left| \P\big( Z_{k-1}(m) \leq v_n\big) - \P\big(X_{(k-1)m}\le v_n\big)\P\big( Z_{k-2}(m) \leq v_n\big)\right| \\
\leq&\ \P\big(X_1\le v_n\big)^2 \P\big( Z_{k-2}(m) \leq v_n\big) + 2 C_n(m; k) \\
\leq&\ \cdots \leq  \P\big(X_1\le v_n\big)^k  + k C_n(m; k).
 \end{align*}
This concludes the proof. \end{proof}
\begin{proposition}\label{Propbasic}
If $k_n \to \infty$ and $m_n \in \GN$ is such that $k_n\cdot m_n \leq n$ and 
\begin{equation}\label{eqmix}
k_n C_n(m_n; k_n) \to 0, \text{ as $n\to \infty$},
\end{equation}
then
\begin{equation}\label{eqbound}
 \sup_n k_n \P\big( X_1 > v_n\big) < +\infty.
\end{equation}
\end{proposition}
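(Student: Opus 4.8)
The plan is to argue by contradiction using the inequality \eqref{ein} from Lemma \ref{ineq}. Suppose that \eqref{eqbound} fails, so that along some subsequence $\{n'\}$ we have $k_{n'}\,\P\big(X_1 > v_{n'}\big) \to +\infty$. Writing $p_n := \P\big(X_1 \leq v_n\big) = 1 - \P\big(X_1 > v_n\big)$, this means $k_{n'}(1 - p_{n'}) \to +\infty$. Since for $0 \leq t \leq 1$ one has $t^k \leq e^{-k(1-t)}$, it follows that $\P\big(X_1 \leq v_{n'}\big)^{k_{n'}} = p_{n'}^{k_{n'}} \leq \exp\big(-k_{n'}(1-p_{n'})\big) \to 0$. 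Now invoke \eqref{ein} with $k = k_{n'}$, $m = m_{n'}$ (which is legitimate because $k_{n'} m_{n'} \leq n'$): we get
\[
\gamma \leq \P\big(X_1 \leq v_{n'}\big)^{k_{n'}} + k_{n'} C_{n'}(m_{n'}; k_{n'}).
\]
By assumption \eqref{eqmix} the second term tends to $0$, and we have just shown the first term tends to $0$ as well; hence the right-hand side tends to $0$, contradicting $\gamma > 0$. Therefore no such subsequence exists, and $\sup_n k_n \P\big(X_1 > v_n\big) < +\infty$.

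The only point requiring a small amount of care is the elementary bound $t^k \leq e^{-k(1-t)}$ for $t \in [0,1]$, which follows from $\log t \leq t - 1$; this is what converts the failure of \eqref{eqbound} along a subsequence into the statement that $\P(X_1 \leq v_n)^{k_n} \to 0$ along that subsequence. Everything else is a direct application of Lemma \ref{ineq} together with the hypotheses $k_n \to \infty$, $k_n m_n \leq n$, and \eqref{eqmix}. I do not anticipate any genuine obstacle here: the proposition is essentially a clean repackaging of Lemma \ref{ineq}, and the contradiction closes immediately once both terms on the right of \eqref{ein} are seen to vanish.
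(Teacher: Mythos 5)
Your proof is correct and rests on the same ingredients as the paper's: the key inequality \eqref{ein} from Lemma \ref{ineq} together with the elementary comparison of $\P(X_1\le v_n)^{k_n}$ with $\exp\big(-k_n\P(X_1>v_n)\big)$ (the paper runs this directly, deducing $\gamma/2\le \P(X_1\le v_n)^{k_n}$ for large $n$ and then bounding $k_n\P(X_1>v_n)$ via $-\ln\P(X_1\le v_n)$, while you contrapose using $t^k\le e^{-k(1-t)}$). The contradiction framing is only a cosmetic difference — if anything it is slightly cleaner, since it never needs the monotonicity of $\{v_n\}$ or $k_n\to\infty$ that the paper invokes.
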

\begin{proof}
For sufficiently large $n$ we have $k_n C_n(m_n; k_n) \leq \gamma/2$. For such $n$ relation (\ref{ein}) gives 
\[ \gamma/2 \leq \P\big(X_1\le v_n\big)^{k_n}.\]
Since $\{v_n\}$ is a non-decreasing sequence, $c = \lim_{n\to\infty} \P\big( X_1 \leq v_n\big)$ exists, and $\gamma/2 \leq c^{k_n}, n\in \GN$. It follows that $c = 1$ and for sufficiently large $n$
\[ k_n \P\big(X_1 > v_n\big) \sim - k_n \ln \big(\P\big(X_1\le v_n\big)\big) \leq \ln3 - \ln\gamma.\] 
\end{proof}

\subsection{Checking Condition $B_{T}(v_n)$}\label{CheckingBT}

\begin{proposition}\label{lembeeren}
Let $\{X_j\}$ be a stationary sequence and $\{v_n\}$ be a sequence of levels.
Suppose  $\{r_n\}$ is such that 
\begin{equation}\label{eqeren}
r_n \P\big( X_1 > v_n\big) \longrightarrow 0.
\end{equation}
Then Condition $B_T(v_n)$ holds iff for all sequences $p_n > r_n$ and $q_n$, $p_n + q_n \leq T\cdot n$,
\begin{equation}\label{condbete}
\mathbb{C}\text{\em ov}\big( h_n(M_{p_n - r_n}), h_n(M_{p_n:p_n + q_n})\big) \longrightarrow 0,
\end{equation}
where $h_n(x) = \1( x \leq v_n)$ and $M_{p:q} = \max_{p< j \leq q} X_j$. 
\end{proposition}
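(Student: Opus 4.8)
The plan is to show that Condition $B_T(v_n)$ is, up to a uniformly vanishing error term, exactly the statement that the covariances in (\ref{condbete}) tend to zero. The starting observation is the elementary identity $\{M_{p+q}\le v_n\}=\{M_p\le v_n\}\cap\{M_{p:p+q}\le v_n\}$, which is just $M_{p+q}=\max(M_p,M_{p:p+q})$. Combined with stationarity, which gives $\P(M_{p:p+q}\le v_n)=\P(M_q\le v_n)$, this rewrites the quantity inside the supremum in (\ref{e4a}) as
\[
\P(M_{p+q}\le v_n)-\P(M_p\le v_n)\P(M_q\le v_n)=\cov\big(h_n(M_p),h_n(M_{p:p+q})\big).
\]
Hence $B_T(v_n)$ says precisely that $\sup_{p+q\le Tn}\big|\cov(h_n(M_p),h_n(M_{p:p+q}))\big|\to 0$, and it only remains to quantify the effect of trimming the last $r_n$ coordinates off the first block.

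Next I would record two bookkeeping estimates. First, by the union bound and stationarity, $\P(M_{a:a+r}>v_n)\le r\,\P(X_1>v_n)$; writing $M_p=\max(M_{p-r_n},M_{p-r_n:p})$ this shows that the events $\{M_p\le v_n\}$ and $\{M_{p-r_n}\le v_n\}$ differ on a set of probability at most $r_n\P(X_1>v_n)=:\varepsilon_n$, so $\E\,|h_n(M_p)-h_n(M_{p-r_n})|\le\varepsilon_n$, where $\varepsilon_n\to 0$ by hypothesis (\ref{eqeren}). Second, since $h_n$ takes values in $[0,1]$, replacing one entry of a covariance by another that is $L^1$-close changes the covariance by at most twice the $L^1$-distance; thus for every $p>r_n$ and every $q$,
\[
\big|\cov(h_n(M_p),h_n(M_{p:p+q}))-\cov(h_n(M_{p-r_n}),h_n(M_{p:p+q}))\big|\le 2\varepsilon_n.
\]
For the small indices $p\le r_n$ the same boundedness gives directly $\big|\cov(h_n(M_p),h_n(M_{p:p+q}))\big|\le 2\,\P(M_p>v_n)\le 2p\,\P(X_1>v_n)\le 2\varepsilon_n$, so these terms are automatically negligible and need no extra hypothesis.

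Combining the estimates, the two suprema
\[
\sup_{p+q\le Tn}\big|\cov(h_n(M_p),h_n(M_{p:p+q}))\big|
\qquad\text{and}\qquad
\sup_{p>r_n,\ p+q\le Tn}\big|\cov(h_n(M_{p-r_n}),h_n(M_{p:p+q}))\big|
\]
differ by at most $2\varepsilon_n\to 0$, so one tends to zero iff the other does. The final step is the routine passage between ``a supremum tends to zero'' and ``every sequence built from it tends to zero'': if the second supremum above does not tend to zero, extract a subsequence along which it stays above some $\delta>0$, choose $(p_n,q_n)$ with $p_n>r_n$ nearly attaining it along that subsequence (and set $p_n=r_n+1$, $q_n=1$ otherwise), and the resulting covariances fail to converge, contradicting (\ref{condbete}). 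I do not expect a genuine obstacle here; the only points that require a little care are the boundary indices $p\le r_n$ (disposed of above) and ensuring $p-r_n\ge 1$ whenever $M_{p-r_n}$ is written, which is exactly what the restriction $p_n>r_n$ guarantees.
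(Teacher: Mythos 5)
Your proof is correct and follows essentially the same route as the paper's: you use the identity $\{M_{p+q}\le v_n\}=\{M_p\le v_n\}\cap\{M_{p:p+q}\le v_n\}$ with stationarity to write the $B_T(v_n)$ quantity as $\cov\big(h_n(M_p),h_n(M_{p:p+q})\big)$, control the trimming of the last $r_n$ coordinates by $r_n\P(X_1>v_n)\to 0$, and dispose of the indices $p\le r_n$ directly, exactly as in the paper. The only difference is presentational: you package the comparison as a uniform bound between two suprema and then pass to sequences, whereas the paper argues sequence-by-sequence (with a subsequence for the case $p_n\le r_n$); the content is the same.
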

\begin{proof}
Condition $B_{T}(v_n)$ is equivalent to the statement that for arbitrary sequences 
$\{p_n\}, \{q_n\} \subset \GN$ satisfying $p_n + q_n \leq T\cdot n$ we have 
\begin{equation}\label{condbe}
 \P\big( M_{p_n + q_n} \leq v_n\big)  =  \P\big( M_{p_n} \leq v_n\big) 
\ P\big( M_{q_n} \leq v_n\big) + o(1).
\end{equation}
Let $\{r_n\}$ satisfies (\ref{eqeren}).
First let us prove that (\ref{condbete}) implies Condition $B_T(v_n)$. 

Take any $\{p_n\}$ and $\{q_n\}$ satisfying $p_n + q_n \leq T\cdot n$. 
Suppose that $p_{n'} \leq r_{n'}$ along a subsequence $\{n'\}$. Then we have 
$\P\big( M_{p_{n'}} \leq v_{n'} \big) \geq \P\big( M_{r_{n'}} \leq v_{n'} \big) \to 1$ as well as 
\begin{align*}
 0 \leq \P\big( M_{q_{n'}} \leq v_{n'}\big)  &- \P\big( M_{p_{n'} + q_{n'}} \leq v_{n'}\big) \leq
\P\big( M_{p_{n'}} > v_{n'} \big) \\
&\leq    \P\big( M_{r_{n'}} > v_{n'} \big) \leq r_{n'} \P\big( X_1 > v_{n'} \big)
\to 0,
\end{align*}
And so
\begin{align*}
 \P\big( M_{p_{n'} + q_{n'}} \leq v_{n'}\big)  &=  \P\big( M_{q_{n'}} \leq v_{n'}\big) + o(1) \\ 
 &=\P\big( M_{p_{n'}} \leq v_{n'}\big) \P\big( M_{q_{n'}} \leq v_{n'}\big) + o(1).
\end{align*}
Hence along the subsequence $\{n'\}$ (\ref{condbe}) holds. So we may and do assume that $p_n > r_n,\ n\in\GN$. Similarly as above we obtain
\begin{align}
\P\big(M_{p_n} \leq v_n\big) &= \P\big(M_{p_n - r_n} \leq v_n\big) + o(1).
\label{quenquen}\\
\P\big(M_{p_n + q_n} \leq v_n\big) &= \P\big(M_{p_n - r_n} \leq v_n, 
M_{p_n : p_n + q_n} \leq v_n\big) + o(1). \label{peenpen}
\end{align}
We have
\begin{align*}
 \P&\big( M_{p_n + q_n} \leq v_n\big)  -  \P\big( M_{p_n} \leq v_n\big) 
\ P\big( M_{q_n} \leq v_n\big) \\
&= \P\big(M_{p_n - r_n} \leq v_n, 
M_{p_n : p_n + q_n} \leq v_n\big) - \P\big(M_{p_n -r_n} \leq v_n\big)
\P\big(M_{q_n} \leq v_n\big) + o(1) \\
&= \cov\big( h_n(M_{p_n - r_n}), h_n(M_{p_n : p_n + q_n})\big) \longrightarrow 0. \tag{by (\ref{condbete})}
\end{align*}
Thus under (\ref{eqeren}) Condition $B_T(v_n)$ is implied by (\ref{condbete}).

To prove the converse implication take any $p_n > r_n$ and $q_n$ such that  $p_n + q_n \leq T\cdot n$ and observe that by (\ref{quenquen}) and (\ref{peenpen})
\begin{align*}
\cov\big( h_n(M_{p_n - r_n}), &h_n(M_{p_n : p_n + q_n})\big) \\
&= 
\cov\big( h_n(M_{p_n}), h_n(M_{p_n : p_n + q_n})\big) + o(1)  
\longrightarrow 0. \tag{by (\ref{condbe})}
\end{align*}
\end{proof}

\begin{corollary}\label{alphamix}
If $\{X_j\}$ is stationary and $\alpha$-mixing and $\{v_n\}$ is such that $\P\big(X_1 > v_n\big) \to 0$, then Condition $B_{\infty}(v_n)$ holds.
\end{corollary}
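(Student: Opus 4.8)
The plan is to verify Condition $B_\infty(v_n)$ directly, by a single blocking argument that is uniform in $p,q\in\GN$. Since $\P(X_1>v_n)\to 0$, I first fix integers $r_n\to\infty$ growing slowly enough that $r_n\,\P(X_1>v_n)\to 0$ (such a sequence always exists once $\P(X_1>v_n)\to 0$); here $r_n$ is the length of the separating gap, and $\alpha(r_n)\to 0$ because $r_n\to\infty$. Throughout I write $M_{a:b}=\max_{a<j\le b}X_j$, so that by stationarity $\P(M_{a:a+c}\le v_n)=\P(M_c\le v_n)$, and I use the elementary identity $\P(M_{a+c}\le v_n)=\P(M_a\le v_n,\,M_{a:a+c}\le v_n)$ together with the crude bound $\P(M_c>v_n)\le c\,\P(X_1>v_n)$.

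Fix $p,q\in\GN$ and distinguish two cases. If $\min(p,q)\le r_n$, say $p\le r_n$, then $\P(M_p>v_n)\le r_n\P(X_1>v_n)$, so both $\P(M_{p+q}\le v_n)$ and $\P(M_p\le v_n)\P(M_q\le v_n)$ differ from $\P(M_q\le v_n)$ by at most $r_n\P(X_1>v_n)$; the subcase $q\le r_n$ is symmetric. If $p,q>r_n$, I insert the gap just before index $p$: since $\P(M_{p-r_n:p}>v_n)\le r_n\P(X_1>v_n)$, replacing $M_p$ by $M_{p-r_n}$ perturbs both $\P(M_{p+q}\le v_n)=\P(M_p\le v_n,\,M_{p:p+q}\le v_n)$ and the factor $\P(M_p\le v_n)$ by at most $r_n\P(X_1>v_n)$. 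The events $\{M_{p-r_n}\le v_n\}\in\sigma(X_1,\dots,X_{p-r_n})$ and $\{M_{p:p+q}\le v_n\}\in\sigma(X_{p+1},\dots,X_{p+q})$ are carried by index blocks separated by $r_n+1$ steps, so the $\alpha$-mixing inequality bounds the difference between their joint probability and the product of their marginals by $\alpha(r_n+1)$; and $\P(M_{p:p+q}\le v_n)=\P(M_q\le v_n)$ by stationarity. Collecting the estimates,
\[
\sup_{p,q\in\GN}\bigl|\P(M_{p+q}\le v_n)-\P(M_p\le v_n)\P(M_q\le v_n)\bigr|\ \le\ 2\,r_n\P(X_1>v_n)+\alpha(r_n+1)\ \longrightarrow\ 0,
\]
which is Condition $B_\infty(v_n)$.

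This is in fact the argument behind Proposition \ref{lembeeren}: that proof never exploits the restriction $p_n+q_n\le Tn$, so under $r_n\P(X_1>v_n)\to 0$ it equally delivers the $B_\infty$-version, and for an $\alpha$-mixing sequence the covariance $\cov\bigl(h_n(M_{p_n-r_n}),h_n(M_{p_n:p_n+q_n})\bigr)$ is at most $\alpha(r_n+1)$ whatever the lengths of the blocks, so the hypothesis of the proposition is met. I expect no genuine obstacle; the only points that need a little care are the uniform treatment of the boundary cases $\min(p,q)\le r_n$ and the verification that the two $\sigma$-algebras feeding the $\alpha$-mixing bound stay $r_n$ steps apart no matter how large $p+q$ is.
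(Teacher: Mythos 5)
Your proof is correct and follows essentially the same route as the paper: the paper's two-line proof chooses $r_n\to\infty$ with $r_n\P(X_1>v_n)\to 0$ and bounds $\big|\cov\big(h_n(M_{p_n-r_n}),h_n(M_{p_n:p_n+q_n})\big)\big|\le\alpha(r_n)$, leaning on the blocking argument of Proposition \ref{lembeeren}, which you have simply written out in full (boundary case $\min(p,q)\le r_n$, gap insertion, mixing bound uniform in block lengths). Your explicit observation that the restriction $p+q\le Tn$ is never used, so the argument yields $B_\infty(v_n)$ directly, is exactly the point implicit in the paper's proof.
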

\begin{proof} Take any $\{r_n\} \subset \GN,\ r_n\to\infty$ such that $r_n \P\big(X_1 > v_n\big) \to 0$. Then
\[\big|\cov\big( h_n(M_{p_n - r_n}), h_n(M_{p_n:p_n + q_n})\big)\big| \leq \alpha(r_n) \longrightarrow 0,\]
\end{proof}

\begin{remark}
Throughout this section the  levels $\{v_n\}$ are given by formula
(\ref{egamma}). For such sequences $\{v_n\}$ there is no {\em a priori} reason for $\P\big( X_1 > v_n\big) \to 0$. Moreover, it is also important to know {\em how fast} $\P\big( X_1 > v_n\big) \to 0$. Proposition \ref{Propbasic} shows how to answer both questions if we are given estimates for covariances of suitable functionals of $\{X_j\}$.  

The next result demonstrates how Proposition \ref{Propbasic} works in the simplest case of $\alpha$-mixing. 
\end{remark}

\begin{proposition}\label{propalpha}
If $\{X_j\}$ is stationary and $\alpha$-mixing and $\{v_n\}$ are defined by (\ref{egam}), then $k_n\alpha (m_n) \to 0$ implies 
\[ \sup_n k_n \P\big( X_1 > v_n\big) < +\infty.\] 
In particular, if $\alpha(r) = \cO(r^{-\beta})$, for some $\beta >0$, then for every $\delta < \beta/(1 + \beta)$ we have 
\begin{equation}\label{alphatwo}
\lim_{n\to\infty} n^{\delta} \P\big( X_1 > v_n\big) = 0.
\end{equation}
\end{proposition}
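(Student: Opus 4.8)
The plan is to apply Proposition~\ref{Propbasic} with a suitable choice of the block-length sequences $\{k_n\}$ and $\{m_n\}$, and to translate the covariance estimate $C_n(m;k)$ into a bound in terms of the $\alpha$-mixing coefficient. The key observation is that the quantity $C_n(m;k)$ appearing in Lemma~\ref{ineq} is a covariance between $h_n(X_{jm})$ and $h_n(Z_{j-1}(m))$, where the functional $Z_{j-1}(m) = \max\{X_m,\dots,X_{(j-1)m}\}$ depends only on coordinates with indices at distance at least $m$ from $jm$. Hence the mixing inequality gives $C_n(m;k) \leq \alpha(m)$ uniformly in $j$ and $k$, so that
\[
k_n C_n(m_n;k_n) \leq k_n\, \alpha(m_n).
\]
Therefore the hypothesis $k_n\alpha(m_n)\to 0$ together with $k_n m_n \leq n$ is exactly what is needed to invoke Proposition~\ref{Propbasic}, yielding $\sup_n k_n\,\P(X_1 > v_n) < +\infty$, which is the first assertion.

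For the second assertion I would optimize the choice of $k_n$ and $m_n$ under the polynomial rate $\alpha(r) = \cO(r^{-\beta})$. Writing $m_n \sim n^{1-a}$ and $k_n \sim n^{a}$ for a parameter $a\in(0,1)$ (so that $k_n m_n \leq n$ is satisfied), the condition $k_n\alpha(m_n)\to 0$ becomes $n^{a} n^{-\beta(1-a)} \to 0$, i.e. $a - \beta(1-a) < 0$, equivalently $a < \beta/(1+\beta)$. For any such $a$ we get $\sup_n n^{a}\,\P(X_1 > v_n) < +\infty$, and in particular $n^{\delta}\,\P(X_1 > v_n)\to 0$ for every $\delta < a$. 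Since $a$ can be taken arbitrarily close to $\beta/(1+\beta)$, this gives \eqref{alphatwo} for every $\delta < \beta/(1+\beta)$.

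The main (and really the only) obstacle is the verification that $C_n(m;k)$ is genuinely controlled by $\alpha(m)$ rather than by something larger. This is where one must be careful about the index gap: in the telescoping bound of Lemma~\ref{ineq} the "new" variable is $X_{jm}$ and the "old" block is $Z_{j-1}(m)$, whose largest index is $(j-1)m = jm - m$; the gap is exactly $m$, so the $\alpha$-mixing bound $|\cov(f,g)|\leq\alpha(r)$ (with $\Psi_\alpha \equiv 4$ absorbed since here $f,g$ are $\{0,1\}$-valued indicators, giving even the sharper bound $\alpha(m)$) applies with $r = m$, uniformly over $2\leq j\leq k$. Once this uniformity is in hand everything else is bookkeeping with the exponents, and the passage to \eqref{alphatwo} is the routine remark that $\sup_n n^a\,\P(X_1>v_n)<\infty$ implies $n^\delta\,\P(X_1>v_n)\to 0$ for $\delta<a$.
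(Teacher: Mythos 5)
Your proposal is correct and follows essentially the same route as the paper: bound $C_n(m;k)\leq\alpha(m)$ by noting the index gap between $X_{jm}$ and $Z_{j-1}(m)$ is exactly $m$, invoke Proposition~\ref{Propbasic}, and then take $k_n=[n^{\delta'}]$, $m_n=[n/k_n]$ with $\delta<\delta'<\beta/(1+\beta)$ to get (\ref{alphatwo}). The only cosmetic difference is your explicit remark on absorbing the constant $\Psi_\alpha\equiv 4$ for indicators, which the paper leaves implicit.
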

\begin{proof} By stationarity, the very definition of $\alpha$-mixing and with $Z_j(m)$ defined by (\ref{zetjotem})
\begin{align*}
\Big| \P\big( Z_j(m) &\leq v_n\big) - \P\big(X_1\le v_n\big)\P\big( Z_{j-1}(m) \leq v_n\big)\Big| = \\
& = \Big| \P\big( Z_{j-1}(m) \leq v_n, X_{jm} \leq v_n\big) - \P\big(X_{jm}\le v_n\big)\P\big( Z_{j-1}(m) \leq v_n\big)\Big| \\
&\leq \alpha(m).
\end{align*}
It follows that $C_n(m; k) \leq \alpha(m)$ and by Proposition \ref{Propbasic} $k_n\alpha(m_n) \to 0$ implies 
the boundedness of $ k_n \P\big( X_1 > v_n\big)$.  

Now suppose that $0 < \delta < \beta/(1+\beta)$. Choose  $\delta'$ satisfying $\delta < \delta' < \beta/(1+\beta)$. Set $k_n = [ n^{\delta'}] \sim   n^{\delta'}$ and $m_n = [n/k_n] \sim n^{1-\delta'}$. Then 
\[ k_n \alpha(m_n) = \cO \big( n^{\delta'} (n^{1-\delta'})^{-\beta}\big) = \cO\big( n^{\delta'(1+\beta) - \beta}\big) \longrightarrow 0.\]
Hence $\sup_n n^{\delta'} \P\big( X_1 > v_n\big) < +\infty$ and so (\ref{alphatwo}) holds.
\end{proof}

\begin{remark}
$\alpha$-mixing gives direct estimates for both the covariances in (\ref{condbete}) and the quantities $C_n(m,k)$ in Proposition \ref{Propbasic}. The other coefficients of weak dependence defined in Section \ref{coefficients} 
provide estimates for smooth (Lipschitz) functionals of $\{X_j\}$. Therefore, similarly as in the proof in Lemma 4.1 page 68 in \cite{DDL07}, we will consider a natural $1/u-$Lipschitz approximation $h_{n,u}$ of the function $h_n(x)= \1_{\{x\leq v_n\}}$, 
which is given by 
\begin{equation}
 h_{n,u}(x) = \begin{cases} 1 &\text{ if } x \leq v_n,\\
-(1/u) (x - v_n) + 1 & \text{ if } v_n < x \leq v_n+ u, \\
0 & \text{ if } x > v_n + u.
\end{cases}
\end{equation}
We have then two basic estimates for $C_n(m,k)$.
\end{remark}
\begin{lemma}\label{covtwo}
Let $\{X_j\}$ be stationary and $F$ satisfies (\ref{Concentration}) with some constants $B,b >0$.
Set $Y_{n,j} = h_n(X_{m})\cdots h_n(X_{(j-1)m}) = \1\big( Z_{j-1}(m) \leq v_n\big)$, where $Z_j(m)$ is defined by (\ref{zetjotem}). Then
\[ C_n(m,k) \leq  \max_{2 \leq  j \leq k}  \Big|\mathbb{C}\text{\em ov}\big(Y_{n,j}, h_{n,u}(X_{jm})\big)\Big| +  B u^b .\]
\end{lemma}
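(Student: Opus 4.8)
The first step is to recognize that each term defining $C_n(m,k)$ is already a covariance. Since $Z_j(m) = \max\{Z_{j-1}(m), X_{jm}\}$, the event $\{Z_j(m)\le v_n\}$ is the intersection $\{Z_{j-1}(m)\le v_n\}\cap\{X_{jm}\le v_n\}$, so its indicator is $Y_{n,j}\,h_n(X_{jm})$ and $\P\big(Z_j(m)\le v_n\big) = \E\big[Y_{n,j}\,h_n(X_{jm})\big]$. By stationarity $\P\big(X_1\le v_n\big) = \P\big(X_{jm}\le v_n\big) = \E\big[h_n(X_{jm})\big]$, and trivially $\P\big(Z_{j-1}(m)\le v_n\big) = \E Y_{n,j}$. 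Hence
\[
\P\big(Z_j(m)\le v_n\big) - \P\big(X_1\le v_n\big)\,\P\big(Z_{j-1}(m)\le v_n\big) = \mathbb{C}\text{ov}\big(Y_{n,j}, h_n(X_{jm})\big),
\]
so that $C_n(m,k) = \max_{2\le j\le k}\big|\mathbb{C}\text{ov}\big(Y_{n,j}, h_n(X_{jm})\big)\big|$.

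The second step replaces $h_n$ by its Lipschitz regularization $h_{n,u}$. Write $\mathbb{C}\text{ov}(Y_{n,j}, h_n(X_{jm})) = \mathbb{C}\text{ov}(Y_{n,j}, h_{n,u}(X_{jm})) + \mathbb{C}\text{ov}\big(Y_{n,j}, h_n(X_{jm}) - h_{n,u}(X_{jm})\big)$. For the error term, note that $Y_{n,j}$ is an indicator, hence $|Y_{n,j} - \E Y_{n,j}| \le 1$, and therefore $\big|\mathbb{C}\text{ov}(Y_{n,j}, g(X_{jm}))\big| = \big|\E[(Y_{n,j}-\E Y_{n,j})\,g(X_{jm})]\big| \le \E\big|g(X_{jm})\big|$ for any bounded $g$. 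Apply this with $g = h_n - h_{n,u}$; from the definition of $h_{n,u}$ one has $0 \le h_{n,u}(x) - h_n(x) \le \1_{(v_n,\,v_n+u]}(x)$, so $\E\big|h_n(X_{jm}) - h_{n,u}(X_{jm})\big| \le \P\big(v_n < X_{jm} \le v_n+u\big) = F(v_n+u) - F(v_n) \le B u^b$ by the concentration assumption (\ref{Concentration}).

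Combining, $\big|\mathbb{C}\text{ov}\big(Y_{n,j}, h_n(X_{jm})\big)\big| \le \big|\mathbb{C}\text{ov}\big(Y_{n,j}, h_{n,u}(X_{jm})\big)\big| + B u^b$ for every $j$, and taking the maximum over $2\le j\le k$ yields the claim. There is no real obstacle here; the only point requiring a line of care is the elementary bound $|\mathbb{C}\text{ov}(Y,g)| \le \E|g|$ valid because the first argument is $[0,1]$-valued, together with the sandwiching $0\le h_{n,u}-h_n\le \1_{(v_n,v_n+u]}$ used to bring in (\ref{Concentration}).
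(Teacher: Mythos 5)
Your proof is correct and follows essentially the same route as the paper: identify each term of $C_n(m,k)$ as $\cov\big(Y_{n,j}, h_n(X_{jm})\big)$, split off the covariance with $h_n-h_{n,u}$, bound it by $\E\big|h_n(X_{jm})-h_{n,u}(X_{jm})\big| \le F(v_n+u)-F(v_n)\le Bu^b$ via (\ref{Concentration}), and take the maximum over $j$. The only difference is that you spell out the elementary bound $|\cov(Y,g)|\le \E|g|$ for $[0,1]$-valued $Y$, which the paper uses implicitly.
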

\begin{proof} We have
\begin{align*}
 \Big| \P\big( Z_j(m) \leq &v_n\big) - \P\big(X_{jm}\le v_n\big)\P\big( Z_{j-1}(m) \leq v_n\big)\Big| = \Big|\cov\big( Y_{n,j}, h_n(X_{jm})\big)\Big| \\
& \leq \Big|\cov\big(Y_{n,j}, h_{n,u}(X_{jm})\big)\Big|+\Big|\cov\big(Y_{n,j}, h_n(X_{jm})-h_{n,u}(X_{jm})\big)\Big|\\
& \leq  \Big|\cov\big(Y_{n,j}, h_{n,u}(X_{jm})\big)\Big|+ \E\big|h_n(X_{jm})-h_{n,u}(X_{jm})\big)\Big| \\
& \leq \Big|\cov\big(Y_{n,j}, h_{n,u}(X_{jm})\big)\Big| +  \big(F(v_n + u) - F(v_n)\big) \\
& \leq  \Big|\cov\big(Y_{n,j}, h_{n,u}(X_{jm})\big)\Big| + B u^b.
\end{align*}
\end{proof}
\begin{lemma}\label{covtwobis}
Let $\{X_j\}$ be stationary and $F$ satisfies (\ref{Concentration}) with some constants $B,b >0$. Then
\begin{align*}
\Big|\mathbb{C}\text{\em ov}&\big( h_n(M_{p_n - r_n}), h_n(M_{p_n:p_n + q_n})\big) \Big| \\
&\leq \Big|\mathbb{C}\text{\em ov}\big( h_n(M_{p_n - r_n}), h_{n,u}(M_{p_n:p_n + q_n})\big) \Big| + B q_n u^b.
\end{align*}
\end{lemma}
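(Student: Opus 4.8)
The plan is to repeat, almost verbatim, the argument used for Lemma~\ref{covtwo}: replace the sharp indicator $h_n$ by its Lipschitz surrogate $h_{n,u}$ in the second slot of the covariance, and absorb the resulting error into a first--moment term controlled by (\ref{Concentration}). The only feature not already present in Lemma~\ref{covtwo} is that here the error involves the \emph{maximum} $M_{p_n:p_n+q_n}$ over a block of length $q_n$ rather than a single coordinate, and this is exactly what produces the factor $q_n$ in front of $u^b$, via a union bound.

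Concretely I would proceed in three short steps. First, by bilinearity of the covariance,
\begin{align*}
\cov\big(h_n(M_{p_n-r_n}),\,h_n(M_{p_n:p_n+q_n})\big)
&=\cov\big(h_n(M_{p_n-r_n}),\,h_{n,u}(M_{p_n:p_n+q_n})\big)\\
&\quad+\cov\big(h_n(M_{p_n-r_n}),\,h_n(M_{p_n:p_n+q_n})-h_{n,u}(M_{p_n:p_n+q_n})\big).
\end{align*}
Second, I would bound the last covariance. From the definition of $h_{n,u}$, the function $h_{n,u}-h_n$ takes values in $[0,1]$ and vanishes outside $(v_n,v_n+u]$, while $0\le h_n(M_{p_n-r_n})\le 1$; hence the elementary inequality $|\cov(A,D)|\le\E D$, valid whenever $0\le A\le 1$ and $D\ge 0$, gives
\begin{align*}
\big|\cov\big(h_n(M_{p_n-r_n}),\,h_n(M_{p_n:p_n+q_n})-h_{n,u}(M_{p_n:p_n+q_n})\big)\big|
&\le\E\big|h_n(M_{p_n:p_n+q_n})-h_{n,u}(M_{p_n:p_n+q_n})\big|\\
&\le\P\big(v_n<M_{p_n:p_n+q_n}\le v_n+u\big).
\end{align*}

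The step I regard as the heart of the matter --- though it is still one line --- is the passage from the block maximum to a single coordinate: on the event $\{v_n<M_{p_n:p_n+q_n}\le v_n+u\}$, any index $j\in\{p_n+1,\dots,p_n+q_n\}$ at which the maximum is attained satisfies $v_n<X_j\le v_n+u$, so a union bound over those $q_n$ indices together with stationarity yields
\begin{align*}
\P\big(v_n<M_{p_n:p_n+q_n}\le v_n+u\big)
&\le\sum_{j=p_n+1}^{p_n+q_n}\P\big(v_n<X_j\le v_n+u\big)\\
&=q_n\big(F(v_n+u)-F(v_n)\big)\le B\,q_n\,u^b,
\end{align*}
the last inequality being the concentration assumption~(\ref{Concentration}). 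Combining the three displays gives the claimed bound. I do not anticipate any real obstacle: this is a routine variant of Lemma~\ref{covtwo}, the union bound being the only genuinely new ingredient and the source of the multiplicative factor $q_n$.
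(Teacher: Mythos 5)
Your proof is correct and follows essentially the same route as the paper's: split off the covariance with $h_{n,u}$ by bilinearity, bound the covariance of the remainder by $\P\big(v_n < M_{p_n:p_n+q_n}\le v_n+u\big)$, and use a union bound over the $q_n$ coordinates together with stationarity and (\ref{Concentration}) to get the $B\,q_n\,u^b$ term. The only cosmetic difference is that you spell out the elementary inequality $|\cov(A,D)|\le \E D$ for $0\le A\le 1$, $D\ge 0$, which the paper uses implicitly.
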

\begin{proof}
Similarly as before
\begin{align*}
 \Big|\cov&\big( h_n(M_{p_n - r_n}), h_n(M_{p_n:p_n + q_n})\big) \Big|  \leq 
\Big|\cov\big( h_n(M_{p_n - r_n}), h_{n,u}(M_{p_n:p_n + q_n})\big) \Big| \\
& +
\Big|\cov\big( h_n(M_{p_n - r_n}), h_n(M_{p_n:p_n + q_n})-  h_{n,u}(M_{p_n:p_n + q_n})\big)\big) \Big|\\
&\leq \Big|\cov\big( h_n(M_{p_n - r_n}), h_{n,u}(M_{p_n:p_n + q_n})\big) \Big| +  \P\big( v_n < M_{p_n:p_n + q_n}\leq v_n + u\big)\\
&\leq \Big|\cov\big( h_n(M_{p_n - r_n}), h_{n,u}(M_{p_n:p_n + q_n})\big) \Big| + q_nB u^b.
\end{align*}
\end{proof}

\begin{lemma}\label{covthree}
In assumptions and notations of Lemma \ref{covtwo} we have
\[ C_n(m,k) \leq  \max_{2 \leq j \leq k}  \Big|\mathbb{C}\text{\em ov}\big(h_{n,u}(Z_{j-1}(m)), h_{n,u}(X_{jm})\big)\Big| +  B k u^b .\]
\end{lemma}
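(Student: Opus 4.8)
Lemma \ref{covthree} asks us to bound $C_n(m,k)$ in terms of covariances involving the smooth approximation $h_{n,u}$ applied to *both* arguments, at the cost of an extra error term $Bku^b$ (instead of the single $Bu^b$ appearing in Lemma \ref{covtwo}). The plan is to repeat the telescoping argument of Lemma \ref{covtwo}, but now also replace the product $h_n(X_m)\cdots h_n(X_{(j-1)m})$ (equivalently $\mathbf{1}(Z_{j-1}(m)\le v_n)$) by $h_{n,u}(Z_{j-1}(m))$, and to control the resulting discrepancy term.

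\textbf{Step 1.} Start from the identity
\[
\Big| \P\big( Z_j(m) \leq v_n\big) - \P\big(X_{jm}\le v_n\big)\P\big( Z_{j-1}(m) \leq v_n\big)\Big| = \Big|\cov\big( h_n(Z_{j-1}(m)), h_n(X_{jm})\big)\Big|,
\]
where I write $h_n(Z_{j-1}(m))$ for $\mathbf{1}(Z_{j-1}(m)\le v_n)$. Then insert and subtract $h_{n,u}$ in each slot, using bilinearity of the covariance. Concretely, bound the covariance by
\[
\Big|\cov\big(h_{n,u}(Z_{j-1}(m)), h_{n,u}(X_{jm})\big)\Big|
+ \Big|\cov\big(h_n(Z_{j-1}(m)) - h_{n,u}(Z_{j-1}(m)),\, h_n(X_{jm})\big)\Big|
+ \Big|\cov\big(h_{n,u}(Z_{j-1}(m)),\, h_n(X_{jm}) - h_{n,u}(X_{jm})\big)\Big|.
\]

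\textbf{Step 2.} Control the two error covariances exactly as in Lemma \ref{covtwo}: since $\|h_n\|_\infty, \|h_{n,u}\|_\infty \le 1$, each such covariance is at most $\E\big|h_n(X_{jm}) - h_{n,u}(X_{jm})\big| \le F(v_n+u)-F(v_n) \le Bu^b$ for the second-slot term, and similarly $\E\big|h_n(Z_{j-1}(m)) - h_{n,u}(Z_{j-1}(m))\big| \le \P\big(v_n < Z_{j-1}(m) \le v_n+u\big)$ for the first-slot term. The one genuine point is that $\P\big(v_n < Z_{j-1}(m)\le v_n+u\big) = \P\big(v_n < \max\{X_m,\dots,X_{(j-1)m}\}\le v_n+u\big) \le \sum_{i=1}^{j-1}\P\big(v_n < X_{im}\le v_n+u\big) \le (j-1)Bu^b \le (k-1)Bu^b$ by a union bound and stationarity together with the concentration assumption \eqref{Concentration}. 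Adding the $Bu^b$ from the second slot gives a total error at most $kBu^b$, whence
\[
C_n(m,k) \le \max_{2\le j\le k}\Big|\cov\big(h_{n,u}(Z_{j-1}(m)), h_{n,u}(X_{jm})\big)\Big| + Bku^b.
\]

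\textbf{Main obstacle.} There is no serious obstacle here — the lemma is a routine variant of Lemma \ref{covtwo}. The only thing to be careful about is the union bound producing the factor $k$ rather than $1$ in front of $Bu^b$: the approximation error now appears in the \emph{first} argument too, which is a maximum over up to $k-1$ coordinates, so one cannot avoid the linear-in-$k$ loss. This is precisely why the statement records $Bku^b$ and not $Bu^b$, and it is exactly the trade-off one must accept in order to use the non-causal weak-dependence coefficients ($\eta$-, $\kappa$-, $\lambda$-), which require Lipschitz regularity in all arguments.
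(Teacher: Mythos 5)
Your proposal is correct and follows essentially the same route as the paper: replace $h_n$ by the Lipschitz approximation $h_{n,u}$ in both slots via bilinearity of the covariance, bound each error covariance by the expectation of the corresponding difference, and control $\P\big(v_n < Z_{j-1}(m)\le v_n+u\big)$ by a union bound and stationarity to get the factor $(j-1)Bu^b \le (k-1)Bu^b$, yielding the stated $Bku^b$. The only cosmetic difference is that the paper performs the two substitutions in successive steps (smoothing the second slot first, then the first) rather than via a single three-term bilinear identity, which changes nothing of substance.
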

\begin{proof}
\begin{align*}
\big|\cov&\big( Y_{n,j},\,  h_n(X_{jm})\big)\big|  \\
 & \leq \big|\cov\big(Y_{n,j}, h_{n,u}(X_{jm})\big)\big|+\big|\cov\big(Y_{n,j}, h_n(X_{jm})-h_{n,u}(X_{jm})\big)\big| \\
& \leq \big|\cov\big( Y_{n,j} - h_{n,u}(Z_{j-1}(m)), h_{n,u}(X_{jm})\big) \big| \\
&\qquad + \big|\cov \big( h_{n,u}(Z_{j-1}(m)), h_{n,u}(X_{jm})\big)\big| + \E\big|h_n(X_{jm})-h_{n,u}(X_{jm})\big)\Big| \\
& \leq  \E\big| Y_{n,j} - h_{n,u}(Z_{j-1}(m))\big| +  \big|\cov \big( h_{n,u}(Z_{j-1}(m)), h_{n,u}(X_{jm})\big)\big| +  B u^b \\
&\leq  \P\big( v_n < Z_{j-1}(m) \leq v_n + u\big) \\
&\qquad +\big|\cov \big( h_{n,u}(Z_{j-1}(m)), h_{n,u}(X_{jm})\big)\big| +  B u^b \\
& \leq  (j-1) \P\big( v_n < X_1 \leq v_n + u\big) \\
&\qquad +\big|\cov \big( h_{n,u}(Z_{j-1}(m)), h_{n,u}(X_{jm})\big)\big| +  B u^b \nonumber\\
&\leq \big|\cov \big( h_{n,u}(Z_{j-1}(m)), h_{n,u}(X_{jm})\big)\big| +   j B u^b.
\end{align*}
\end{proof}

\begin{lemma}\label{covthreebis}
Let $\{X_j\}$ be stationary and $F$ satisfies (\ref{Concentration}) with some constants $B,b >0$. Then
\begin{align*}
\Big|\mathbb{C}\text{\em ov}&\big( h_n(M_{p_n - r_n}), h_n(M_{p_n:p_n + q_n})\big) \Big| \\
&\leq \Big|\mathbb{C}\text{\em ov}\big( h_{n,u}(M_{p_n - r_n}), h_{n,u}(M_{p_n:p_n + q_n})\big) \Big| + B (q_n + p_n)u^b.
\end{align*}
\end{lemma}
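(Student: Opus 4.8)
The plan is to imitate the proof of Lemma \ref{covtwobis}, replacing the single approximation step there by a double approximation: first approximate the ``inner'' indicator $h_n(M_{p_n-r_n})$ by its Lipschitz version $h_{n,u}(M_{p_n-r_n})$, and then approximate the ``outer'' indicator $h_n(M_{p_n:p_n+q_n})$ by $h_{n,u}(M_{p_n:p_n+q_n})$, controlling each error by the triangle inequality for covariances together with the bound $|\cov(U,V)|\leq \E|V-V'|$ when $\|U\|_\infty\leq 1$.

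First I would write, using bilinearity of the covariance and the triangle inequality,
\begin{align*}
\Big|\cov\big( h_n(M_{p_n - r_n}), h_n(M_{p_n:p_n + q_n})\big)\Big|
&\leq \Big|\cov\big( h_{n,u}(M_{p_n - r_n}), h_n(M_{p_n:p_n + q_n})\big)\Big| \\
&\quad + \Big|\cov\big( h_n(M_{p_n - r_n}) - h_{n,u}(M_{p_n - r_n}), h_n(M_{p_n:p_n + q_n})\big)\Big|.
\end{align*}
The second term is at most $\E\big| h_n(M_{p_n-r_n}) - h_{n,u}(M_{p_n-r_n})\big| \leq \P\big( v_n < M_{p_n-r_n} \leq v_n+u\big)$, which is bounded by $p_n B u^b$ via the union bound over the $p_n-r_n\leq p_n$ coordinates and the concentration assumption (\ref{Concentration}), exactly as in Lemma \ref{covthree}. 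Then I would split the first term again,
\begin{align*}
\Big|\cov\big( h_{n,u}(M_{p_n - r_n}), h_n(M_{p_n:p_n + q_n})\big)\Big|
&\leq \Big|\cov\big( h_{n,u}(M_{p_n - r_n}), h_{n,u}(M_{p_n:p_n + q_n})\big)\Big| \\
&\quad + \Big|\cov\big( h_{n,u}(M_{p_n - r_n}), h_n(M_{p_n:p_n + q_n}) - h_{n,u}(M_{p_n:p_n + q_n})\big)\Big|,
\end{align*}
and bound the second term by $\E\big| h_n(M_{p_n:p_n+q_n}) - h_{n,u}(M_{p_n:p_n+q_n})\big| \leq \P\big( v_n < M_{p_n:p_n+q_n}\leq v_n+u\big) \leq q_n B u^b$, again by a union bound over the $q_n$ coordinates in the block $(p_n, p_n+q_n]$ and (\ref{Concentration}), just as in Lemma \ref{covtwobis}. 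Adding the two error terms yields the stated bound $B(q_n+p_n)u^b$.

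There is really no main obstacle here: the only point requiring a little care is that in the step $|\cov(U, V-V')|\leq \E|V-V'|$ one uses $\|h_{n,u}\|_\infty\leq 1$ and $\|h_n\|_\infty\leq 1$ so that the centering terms are also controlled; this is the same elementary estimate already used repeatedly in Lemmas \ref{covtwo}--\ref{covthreebis}. The combinatorial content is simply that $M_{p_n-r_n}$ is a maximum over $p_n-r_n\leq p_n$ variables and $M_{p_n:p_n+q_n}$ is a maximum over $q_n$ variables, so that the events $\{v_n < M \leq v_n+u\}$ have probability at most (number of coordinates)$\cdot B u^b$ by subadditivity; summing the two contributions gives the coefficient $q_n+p_n$.
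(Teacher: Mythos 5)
Your proposal is correct and follows essentially the same route as the paper: a two-step triangle-inequality decomposition, bounding each error term by $\E\big|h_n-h_{n,u}\big|$ (valid since the other factor takes values in $[0,1]$) and then by a union bound together with the concentration assumption, giving $p_nBu^b+q_nBu^b$. The only difference is the order in which the two arguments are smoothed, which is immaterial.
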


\begin{proof}
\begin{align*}
 \Big|\cov\big( h_n&(M_{p_n - r_n}),h_n(M_{p_n:p_n + q_n})\big) \Big| \\
&\leq 
\Big|\cov\big( h_n(M_{p_n - r_n}), h_n(M_{p_n:p_n + q_n})-  h_{n,u}(M_{p_n:p_n + q_n})\big)\big) \Big|\\
&\qquad  + \Big|\cov\big( h_{n}(M_{p_n - r_n}) -  h_{n,u}(M_{p_n - r_n}) , h_{n,u}(M_{p_n:p_n + q_n})\big) \Big| \\
&\qquad + 
\Big|\cov\big( h_{n,u}(M_{p_n - r_n}), h_{n,u}(M_{p_n:p_n + q_n})\big) \Big| \\
&\leq    \P\big( v_n < M_{p_n:p_n + q_n}\leq v_n + u\big) + \P\big( v_n < M_{p_n-r_n}\leq v_n + u\big) \\
&\qquad  + \Big|\cov\big( h_{n,u}(M_{p_n - r_n}), h_{n,u}(M_{p_n:p_n + q_n})\big) \Big|\\
&\leq  (p_n + q_n) B u^b + \Big|\cov\big( h_{n,u}(M_{p_n - r_n}), h_{n,u}(M_{p_n:p_n + q_n})\big) \Big|.
\end{align*}
\end{proof}

\subsection{Proof of Theorem \ref{thm:alpha}}\label{proof:alpha}
Suppose that $\{X_j\}$ has {\em continuous} marginal distribution function $F$. 
 Let us define, as in (\ref{egam}), $v_n = \inf \{ x\,;\, \P\big( M_n \leq x\big) \geq \gamma\}$, for some $\gamma \in (0,1)$. Since
$F(x)$ is continuous, so is the distribution function of 
$M_n$, for each $n \in \GN$. Hence
\begin{equation}\label{ehalf}
\P\big( M_n \leq v_n\big) = \gamma,\ n\in\GN,
\end{equation}
and  relation (\ref{e3}) holds. Thus in view of Theorem \ref{th1} (iii) or (iv), if we want to 
find a phantom distribution function for $\{X_j\}$ we have to verify either Condition $B_{\infty}(v_n)$ or Condition $B_T(v_n)$, for each $T > 0$.

Suppose that $\alpha(r) \to 0$, as $r\to\infty$, and let $k_n \to\infty$ be such that $k_n\leq \sqrt{n}$ and $k_n\alpha(\sqrt{n}) \to 0$. Then by Proposition \ref{propalpha} $\sup_n k_n
\P\big(X_1 > v_n\big) < +\infty$, and so $\P\big(X_1 > v_n\big) \to 0$. Hence to check Condition $B_{\infty}(v_n)$ we can apply Corollary  \ref{alphamix}.

\subsection{Proof of Theorem \ref{PHDFtheta}}\label{prooftheta}
Since $F$ is continuous by the concentration assumption  (\ref{Concentration}), we can  define $v_n$ by (\ref{egam}) and then (\ref{ehalf}) holds. So it remains to verify Condition $B_T(v_n)$ for any $T >0$ or by Proposition \ref{lembeeren} to check (\ref{condbete}) with $\{r_n\}$ such that  $r_n \P\big( X_1 > v_n\big) \to 0$.

The coefficient $\theta$ is causal, so Lemmas \ref{covtwo} and \ref{covtwobis} are applicable. By the latter and the very definition of $\theta$
we have
\begin{align*}
\Big|\cov&\big( h_n(M_{p_n - r_n}), h_n(M_{p_n:p_n + q_n})\big) \Big| \\
&\leq \Big|\cov\big( h_n(M_{p_n - r_n}), h_{n,u}(M_{p_n:p_n + q_n})\big) \Big| + B q_n u^b \\
&\leq q_n\big(\frac{1}{u} \theta(r_n) + B u^b\big) \leq T\cdot n \big(\frac{1}{u} \theta(r_n) + B u^b\big) \\
&= T(1 + B)\cdot n\cdot \theta^{b/(1+b)}(r_n),
\end{align*}
where in the last equality we substituted $u = \theta^{1/(1+b)}(r_n)$. 

We look for $r_n$ of the form $[n^{\tau}] \sim n^{\tau}$, for some $0 < \tau < 1$. From the above estimate we need $n \cdot \theta^{b/(1+b)}(r_n) = \cO( n^{1 - \tau\cdot \beta b/(1+b)}) \to 0$ or 
\[(1+b)/(b\beta) < \tau.\]
We have to check whether for some $\tau$ satisfying the above bound $n^{\tau} \P\big( X_1 > v_n\big) \to 0$. By Proposition \ref{Propbasic} 
it is enough to check whether for some $1 -\tau > \delta > 0$  
\[ n^{\tau + \delta} C_n\big( [n^{1-\tau-\delta}]; [n^{\tau + \delta}]) \to 0.\]
By Lemma \ref{covtwo} and the trick with substitution $u = \theta^{1/(1+b)}(m)$ we have
\[ C_n(m ; k) \leq \frac{1}{u} \theta(m) + B u^b  = (1 + B) \theta^{b/(1+b)}(m).\]
So in this case we need $ n^{\tau + \delta} \big(n^{1 - \tau - \delta}\big)^{-\beta(b/(1+b))} = n^{\big((\tau +\delta)(1 + b + \beta b) - \beta b\big)/(1+ b)} \to 0$ or
\[ \tau + \delta < \frac{\beta b}{1 + b + \beta b}.\]
For $b$ fixed, $\tau> 0$ satisfying both required bounds does exist iff  $\beta > (1 + \frac{1}{b}) \varphi$, where 
\[ \varphi = \frac{1+\sqrt{5}}{2}  \approx 1.618\ldots \]
is the {\em Golden Ratio}.

\subsection{Proof of Theorem \ref{PHDFeta}}\label{proofeta}

First let us observe that  $h_{n,u}( x_1\vee x_2 \vee \cdots \vee x_j)$ is a $1/u$-Lipschitz approximation of  $h_{n}( x_1\vee x_2 \vee \cdots \vee x_j) = \1\big(
x_1 \leq v_n, x_2 \leq v_n, \ldots, x_j \leq v_n\big)$. Indeed, for each $j\ge1$ the function $m_j:\,(x_1,\ldots,x_j)\mapsto  x_1 \vee x_2 \vee \cdots \vee x_j$ is $1-$Lipschitz in the sense that $|m_j(x)-m_j(y)|\le |x_1-y_1|+\cdots+ |x_j-y_j|$. Hence we have 
$\Lip (h_{n,u}\circ m_j)=\frac1u$.

Next we follow the proof of Theorem \ref{PHDFtheta}, using Lemmas \ref{covthree} and \ref{covthreebis} due to the non-causality of the coefficient $\eta$. 
We have by Lemma \ref{covthreebis}, the definition of $\eta (r) $ and the trick with substitution $u = \eta^{1/(1+b)}(r_n)$
\begin{align*}
\Big|\cov&\big( h_n(M_{p_n - r_n}), h_n(M_{p_n:p_n + q_n})\big) \Big| \\
&\leq \Big|\cov\big( h_{n,u}(M_{p_n - r_n}), h_{n,u}(M_{p_n:p_n + q_n})\big) \Big| + B (q_n + p_n)u^b\\
&\leq \frac{p_n + q_n}{u} \eta(r_n) + B (q_n + p_n)u^b\\
&\leq T \cdot n \big( \frac{\eta(r_n)}{u} + B u^b\big) = T(1+B)\cdot n\cdot \eta^{b/(1+b)}(r_n).
\end{align*}
Setting $r_n = [n^{\tau}] \sim n^{\tau}$, for some $0 < \tau < 1$, we need, as for $\theta$-dependence, $(1+b)/(b\beta) < \tau $.

By Lemma \ref{covthree} and substituting $u = \eta^{1/(1+b)}(m)$ we have
\begin{align*}
C_n(m,k) &\leq  \max_{ 2\leq j \leq k}  \Big|\cov\big(h_{n,u}(Z_{j-1}(m)), h_{n,u}(X_{jm})\big)\Big| +  B k u^b\\
&\leq \max_{2 \leq j \leq k}  \frac{(j-1) + 1}{u} \eta(m) + B k u^b \\
& = k \big(\frac{\eta(m)}{u} + B u^b\big) = (1+B) k \eta^{b/(1+b)}(m).
\end{align*}
By Proposition \ref{Propbasic} we need for some $1 - \tau > \delta > 0$
\[n^{\tau + \delta} n^{\tau + \delta} (n^{1-\tau -\delta}\big)^{-\beta b/(1+b)} \to 0\] or
$\tau + \delta < \beta b/(2 + 2b + \beta b)$. For $b$ fixed, $\tau> 0$ satisfying both required bounds does exist iff  $\beta > 2(1 + \frac{1}{b})$.

\subsection{Proof of Theorem \ref{PHDFkappa}}\label{proofkappa}

We follow the proof of Theorem \ref{PHDFeta}. By Lemma \ref{covthreebis}, the definition of $\kappa (r) $ and the substitution $u = \eta^{1/(2+b)}(r_n)$ we have 
\begin{align*}
\Big|\cov&\big( h_n(M_{p_n - r_n}), h_n(M_{p_n:p_n + q_n})\big) \Big| \\
&\leq \Big|\cov\big( h_{n,u}(M_{p_n - r_n}), h_{n,u}(M_{p_n:p_n + q_n})\big) \Big| + B (q_n + p_n)u^b\\
&\leq \frac{p_n \cdot q_n}{u^2} \kappa(r_n) + B (q_n + p_n)u^b\\
&\leq T^2 \cdot n^2 \big( \frac{\kappa(r_n)}{u^2} + B u^b\big) = T^2(1+B) \cdot n^2 \cdot \kappa^{b/(2+b)}(r_n).
\end{align*}
Setting $r_n = [n^{\tau}] \sim n^{\tau}$, for some $0 < \tau < 1$, we need
$n^2 \cdot \kappa^{b/(2+b)}(r_n) = \cO( n^{2 - \tau\cdot \beta b/(2+b)}) \to 0$ or  $2(2+b)/(b\beta) < \tau $.

By Lemma \ref{covthree} and substituting $u = \kappa^{1/(2+b)}(m)$ we have
\begin{align*}
C_n(m,k) &\leq  \max_{ 2\leq j \leq k}  \Big|\cov\big(h_{n,u}(Z_{j-1}(m)), h_{n,u}(X_{jm})\big)\Big| +  B k u^b\\
&\leq \max_{2 \leq j \leq k}  \frac{(j-1)\cdot 1}{u^2} \kappa(m) + B k u^b \\
& = k \big(\frac{\kappa(m)}{u^2} + B u^b\big) = (1+B) k \kappa^{b/(2+b)}(m).
\end{align*}

By Proposition \ref{Propbasic} we need for some $1 - \tau > \delta > 0$
\[n^{\tau + \delta} n^{\tau + \delta} (n^{1-\tau -\delta}\big)^{-\beta b/(2+b)} \to 0\] or
$\tau + \delta < \beta b/(4 + 2b + \beta b)$. For $b$ fixed required
 $\tau> 0$ exists iff  $\beta > 2 \varphi (1 + \frac{2}{b})$.

\subsection{Proof of Theorem \ref{th3b}}

Let us assume that the marginal distribution function $F$ allows jumps. Then the distribution of $M_n$ is also discontinuous. Hence defining $\{v_n\}$ through 
(\ref{egam}) we have only 
\begin{equation}\label{ehalfmore}
 \P\big( M_n \leq v_n \big) \geq \gamma,
\end{equation}
and we cannot directly conclude that $\P\big( M_n \leq v_n\big) \to \gamma$. 

On the other hand, (\ref{ehalfmore}) is enough for all results of Sections \ref{Basic} and \ref{CheckingBT} to hold. In particular, following the proof of Theorem \ref{thm:alpha}, we obtain from Proposition \ref{propalpha}
and Corollary \ref{alphamix} that  for $\alpha$-mixing sequences Condition $B_{\infty}(v_n)$ is satisfied. 

To guarantee  that $\P\big( M_n \leq v_n\big) \to \gamma$  (equivalently: $\P\big( M_n = v_n\big) \to 0$),
we need additional assumptions.

By Proposition \ref{propalpha} we have that
\[ k_n\P\big(X_1 > v_n\big) < +\infty,\]
whenever
$k_n \alpha(m_n) \to 0$ and $k_n \to \infty$, $k_n m_n \leq n$.
In particular, if $\alpha(m+1) = 0$, then 
\[ \sup_{n} [n/(m+1)] \P\big( X_1 > v_n) = L_1< +\infty.\]
If $F$ satisfies $\Delta_0$, then
\begin{align*}
\P\big( M_n = v_n\big) &\leq n \P\big( X_1 = v_n\big) 
= n \P\big( X_1 > v_n\big) \frac{\P\big( X_1 = v_n\big)}{\P\big( X_1 > v_n\big)} \\
& \leq (m+1) L_1 \frac{\P\big( X_1 = v_n\big)}{\P\big( X_1 > v_n\big)}\to 0.
\end{align*}

Next let us notice that (ii) is implied by (iii), for if $\xi$ is fixed, then the exponential rate of mixing implies a polynomial rate with arbitrary $\beta > 1/\xi$.  

So suppose that $\alpha(n) \leq C n^{-\beta}$, $F$ satisfies $\Delta_{\xi}$ and $1 < \xi  \beta.$
Then $1 + \xi < \xi \beta + \xi$ and so $\frac{1}{1+\beta} < \frac{\xi}{1+\xi}.$
Let us choose $\delta$ such that
\begin{equation}\label{goodes}
\frac{1}{1+\beta} < \delta < \frac{\xi}{1+\xi}.
\end{equation}
Let $m_n = [ n^\delta ]$ and let $k_n = [ n/m_n ]$. Then we check that due to $\delta > 1/(1+\beta)$ that
\[ k_n \alpha (m_n) \leq C k_n {m_n}^{-\beta} \sim  C n^{(1-\delta)} n^{-\delta\beta} \to 0.\]
Hence by Proposition \ref{propalpha}
\[ \sup_n n^{1-\delta} \P\big( X_1 > v_n\big) \leq L_2 <+\infty.\]
Since $\delta < \xi/(1+\xi)$ is equivalent to 
$\delta/\xi < (1-\delta)$, we obtain 
\begin{align*}
\P\big( M_n = v_n\big) &\leq n \P\big( X_1 = v_n\big) 
=  \frac{\P\big( X_1 = v_n\big)}{\P\big( X_1 > v_n\big)^{1 + \xi}} n \P\big( X_1 > v_n\big)^{1 + \xi} \\
& \leq M_{F,\xi} n^{1-\delta} \P\big( X_1 > v_n\big)   n^{\delta}\P\big( X_1 > v_n\big)^{\xi} \\
& \leq M_{F,\xi} L_2  \Big(n^{\delta/\xi}\P\big( X_1 > v_n\big)\Big)^{\xi} \to 0, \text{\qquad since $n^{\delta/\xi} = o(n^{1-\delta})$.}
\end{align*}

\begin{acknowledgements}
We would like to thank the Associated Editor and the Referee for their constructive 
criticism and for turning our attention to the important paper by \cite{Asmu98}.
\end{acknowledgements}
\bibliographystyle{aps-nameyear}     

\end{document}